\theoremstyle{plain}
\newtheorem{thm}{{\bf Theorem}}[section]
\newtheorem{cor}[thm]{{\bf  Corollary}}
\newtheorem{prop}[thm]{{\bf Proposition}}
\newtheorem{lemma}[thm]{{\bf Lemma}}
\newtheorem{fact}[thm]{{\bf Fact}}
\newtheorem{claim}[thm]{{\bf Claim}}
\theoremstyle{definition}
\newtheorem{define}[thm]{{\bf Definition}}
\newtheorem{note}[thm]{{\bf Note}}
\newtheorem{question}[thm]{{\bf Question}}
\newcommand{\cf}{\mathord{\mathrm{cf}}}
\newcommand{\dom}{\mathord{\mathrm{dom}}}
\newcommand{\size}[1]{\left\vert {#1} \right\vert}
\newcommand{\p}{\mathcal{P}}
\newcommand{\seq}[1]{\langle {#1} \rangle}
\newcommand{\ka}{\kappa}
\newcommand{\la}{\lambda}
\newcommand{\om}{\omega}
\newcommand{\bbP}{\mathbb{P}}
\newcommand{\calF}{\mathcal{F}}
\newcommand{\calG}{\mathcal{G}}
\newcommand{\calL}{\mathcal{L}}
\newcommand{\ZF}{\mathsf{ZF}}
\newcommand{\ZFC}{\mathsf{ZFC}}
\newcommand{\LT }{\mathsf{LT}}
\newcommand{\AC}{\mathsf{AC}}
\newcommand{\fix}{\mathrm{fix}}
\newcommand{\sym}{\mathrm{sym}}
\title[A note on \L o\'s's Theorem without AC]{A note on \L o\'s's Theorem without the Axiom of Choice}
\author[T. Usuba]{Toshimichi Usuba}
\address[T. Usuba]
{Faculty of Science and Engineering,
Waseda University, 
Okubo 3-4-1, Shinjyuku, Tokyo, 169-8555 Japan}
\email{usuba@waseda.jp}
\keywords{Axiom of Choice, \L o\'s's theorem, Symmetric extension, Ultrapower, Ultraproduct}
\subjclass[2020]{Primary 03C20, 03E25, 03E35, 03E55}
\begin{document}

\begin{abstract}
We study some topics about \L o\'s's theorem without
assuming the Axiom of Choice. 
We prove that \L o\'s's fundamental theorem of ultraproducts is
equivalent to a weak form that 
every ultrapower is elementarily equivalent to
its source structure.
On the other hand, it is consistent that
there is a structure $M$ and an ultrafilter $U$
such that the ultrapower of $M$ by $U$ is elementarily equivalent to
$M$, but the fundamental theorem for the ultrapower of $M$ by $U$ fails.
We also show that weak fragments of the Axiom of Choice,
such as the Countable Choice,
do not follow 
from \L o\'s's theorem, even assuming 
the existence of non-principal ultrafilters.

\end{abstract}
\maketitle

\section{Introduction}
\emph{Ultraproduct} and \emph{ultrapower} are basic tools in model theory, set theory, and many other fields.
For an indexed family of first order structures $\{M_i \mid  i \in I\}$ with same language and an ultrafilter $U$ over $I$,
let $\prod_{i \in I} M_i/U$ denote the ultraproduct of the family $\{M_i \mid i \in I\}$
by $U$. 
For  ultraproducts, the following \emph{\L o\'s's theorem} is very important:
\begin{thm}[\L o\'s's  fundamental theorem of ultraproducts \cite{L}]
Let $U$ be an ultrafilter over a set $I$, and
$\{M_i \mid i\in I\}$ an indexed family of structures with language $\calL$.
For $f \in \prod_{i \in I} M_i$, let $[f]$ denote the 
equivalence class of $f$ modulo $U$.
Then for every $\calL$-formula $\varphi(v_0,\dotsc, v_n)$ and
$f_0,\dotsc, f_n \in \prod_{i \in I} M_i$,
\begin{align*}
\prod_{i \in I} M_i /U & \models \varphi([f_0],\dotsc, [f_n])\\
&\iff \{ i \in I \mid M_i \models \varphi(f_0(i),\dotsc, f_n(i))\} \in U.
\end{align*}
\end{thm}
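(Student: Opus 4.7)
The plan is to prove the statement by induction on the complexity of the $\calL$-formula $\varphi$, handling atomic formulas, propositional connectives, and quantifiers in turn. Throughout I write $[\bar f]$ for $[f_0],\dotsc,[f_n]$ and $\bar f(i)$ for $f_0(i),\dotsc,f_n(i)$.

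First I would dispatch the base case. For atomic equalities $t_1 = t_2$ between $\calL$-terms, the equivalence reduces to the very definition of the equivalence relation modulo $U$: $[f] = [g]$ holds in the ultraproduct precisely when $\{i \in I : f(i) = g(i)\} \in U$, and an easy subinduction lifts this to compound terms. For a relation symbol $R(t_1,\dotsc,t_k)$, the interpretation $R^{\prod M_i/U}$ on the quotient is set up so that $R^{\prod M_i/U}([f_1],\dotsc,[f_k])$ holds iff $\{i : R^{M_i}(f_1(i),\dotsc,f_k(i))\} \in U$; verifying that this is well-defined on equivalence classes uses only the filter axioms.

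Next I would run the propositional step using the ultrafilter axioms directly. Negation uses that a subset of $I$ lies in $U$ iff its complement does not, which matches $\neg$ semantically. Conjunction uses closure of $U$ under finite intersections together with upward closure, matching $\wedge$; disjunction is then dual, or reduces via $\neg$ and $\wedge$. These cases are routine bookkeeping.

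The main obstacle is the existential quantifier step, which is exactly where the Axiom of Choice enters and which the rest of the paper interrogates. Assume the claim for $\varphi(v, v_0, \dotsc, v_n)$. The forward direction is easy: a witness $[g]$ for $\exists v\, \varphi$ in the ultraproduct yields, by the inductive hypothesis, a set in $U$ contained in $\{i : M_i \models \exists v\, \varphi(v, \bar f(i))\}$, so the latter is in $U$. For the converse, given $A = \{i : M_i \models \exists v\, \varphi(v, \bar f(i))\} \in U$, I would invoke AC on the family of nonempty solution sets $\{\{x \in M_i : M_i \models \varphi(x, \bar f(i))\} : i \in A\}$ to pick $g(i)$ for each $i \in A$, extend $g$ arbitrarily on $I \setminus A$, and then apply the inductive hypothesis to $[g]$. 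Universal quantifiers are handled by $\forall v \equiv \neg \exists v\, \neg$, reusing earlier cases.
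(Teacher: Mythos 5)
Your proposal is correct and is exactly the standard induction on formula complexity that the paper relies on: the paper cites the theorem to \L o\'s\ and does not reprove it in full, but in Lemma \ref{2.1+} it isolates precisely the existential-quantifier direction as the only place where a choice principle is needed and handles it by choosing witnesses from the family of nonempty solution sets, which is the same step you identify and resolve with $\AC$. No gaps.
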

This theorem provides considerable information about ultraproducts and ultrapowers;
For instance, ${}^IM/U$, the ultrapower of a first order structure $M$ by
an ultrafilter $U$ over a set $I$,  is elementarily equivalent to $M$.

\L o\'s's theorem is a theorem of $\ZFC$,
and it turned out that
\L o\'s's theorem is not provable from $\ZF$ alone:
Let $\mathsf{BPI}$ be the statement that every Boolean algebra has a
prime ideal.
Howard \cite{H} showed that \L o\'s's theorem$\,+\,\mathsf{BPI}$ is equivalent to $\mathsf{AC}$
in $\ZF$.
It is known that $\ZF+\mathsf{BPI}$ does not imply $\mathsf{AC}$,
so \L o\'s's theorem is not provable from $\ZF$.

Moreover, it was appeared that \L o\'s's theorem is strictly weaker than $\mathsf{AC}$.
Blass \cite{B} constructed a model of $\ZF$ in which
there is no non-principal ultrafilter.
In his model, $\mathsf{AC}$ fails but
\L o\'s's theorem holds (in the trivial sense).
Hence in $\ZF$, \L o\'s's theorem does not imply $\mathsf{AC}$.

Here we focus on the statement that
``every ultrapower is elementarily equivalent to its source structure''.
As stated before,
this is just a consequence of \L o\'s's theorem,
and it seems weaker than \L o\'s's theorem.
However, some papers and books (e.g., Howard \cite{H}, Howard-Rubin \cite{AC})
present this weak statement as \L o\'s's theorem.
Actually Howard's proof was carried out with this weak statement,
and many consequences of \L o\'s's theorem follow from
this weak statement (e.g., see Tachtsis \cite{T}).
This situation arises the following natural question, which was also asked in \cite{T}.
\begin{question}[In $\ZF$]
Is the weak statement that ``every ultrapower is elementarily equivalent to its source structure''
really weaker than \L o\'s's fundamental theorem of ultraproducts?
\end{question}

To distinguish various forms of \L o\'s's theorem,
let us  introduce  the following definitions:
Let $U$ be an ultrafilter over a set $I$.
Let $M$ be a first order structure,
and $\{M_i \mid i\in I\}$ an indexed family of first order structures with same language.
\begin{enumerate} \item $\LT 1(M, U)$: $M$ is elementarily equivalent to ${}^I M/U$.
\item $\LT 2(M, U)$: The fundamental theorem holds for the ultrapower 
${}^I M/U$.
\item $\LT 3(\{M_i \mid i \in I\}, U)$: If $\prod_{i \in I} M_i \neq \emptyset$, then
the fundamental theorem holds for the ultraproduct
$\prod_{i \in I} M_i/U$.
\end{enumerate}
Let $\LT  n$ for $n =1,2$ be the statement that
$\LT  n(M, U)$ holds for every structure $M$ and ultrafilter $U$ over $I$,
and $\LT 3$ be 
that $\LT 3(\{M_i \mid i \in I\}, U)$ holds for every family $\{M_i \mid i \in I\}$ of structures with
same language and every ultrafilter $U$ over $I$.
$\LT 3$ is \L o\'s's  fundamental theorem of ultraproducts,
and $\LT 1$ is the weak statement in question.
The following implications are clear from the definitions:
\[\LT 2(M, U) \Rightarrow \LT 1(M, U)\]
and
\[ \LT 3\Rightarrow \LT 2 \Rightarrow \LT 1.\]

In this paper, first we show  the following theorem in Section \ref{sec3}:
\begin{thm}[In $\ZF$]\label{1.3}
$\LT 1$, $\LT 2$, and $\LT 3$ are equivalent.
\end{thm}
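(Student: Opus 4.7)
The plan is to close the circle by proving $\LT 1 \Rightarrow \LT 2 \Rightarrow \LT 3$; the converse implications are already observed. Both steps follow the same template: \emph{reduce a \L o\'s-style equivalence to a first-order sentence that holds by construction in a cooked-up auxiliary structure $N$, then apply the already-available \L o\'s statement to $N$.}

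For $\LT 1 \Rightarrow \LT 2$, fix $M$, an ultrafilter $U$ on $I$, a tuple $\vec f = (f_0, \dotsc, f_n) \in ({}^I M)^{n+1}$, and an $\calL$-formula $\varphi(v_0, \dotsc, v_n)$; assume $M \neq \emptyset$ and fix $m_0 \in M$. Build a structure $N$ with universe $M \sqcup I$ carrying unary predicates $P_M, P_I$ for the two parts, with the symbols of $\calL$ interpreted as in $M$ on the $M$-part and constantly $m_0$ outside, fresh unary functions $F_0, \dotsc, F_n$ satisfying $F_j(i) = f_j(i)$ for $i \in I$, and a fresh unary predicate $Q$ with $Q(i) \iff M \models \varphi(f_0(i), \dotsc, f_n(i))$. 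Then $N$ satisfies the sentence
\[
\sigma \;\equiv\; \forall u\,\bigl(P_I(u) \to \bigl(Q(u) \leftrightarrow \varphi^{P_M}(F_0(u), \dotsc, F_n(u))\bigr)\bigr),
\]
where $\varphi^{P_M}$ is the usual relativization of $\varphi$ to $P_M$. By $\LT 1(N, U)$, $\sigma$ holds in ${}^I N/U$. Instantiating $u$ with $[\mathrm{id}_I]$ (which lies in $P_I$), and observing that $F_j([\mathrm{id}_I]) = [f_j]$ and $Q([\mathrm{id}_I]) \iff \{i : M \models \varphi(\vec f(i))\} \in U$ (both directly from the ultrapower definition of atomic and function symbols), reduces the task to proving
\[
\varphi^{P_M}([f_0], \dotsc, [f_n]) \text{ in } {}^I N/U \iff \varphi([f_0], \dotsc, [f_n]) \text{ in } {}^I M/U.
\]
This last equivalence is a short induction on $\varphi$ via the canonical bijection between the $P_M$-part of ${}^I N/U$ and ${}^I M/U$; the existential clause modifies a witness $[h]$ with $P_M([h])$ into an $M$-valued representative using $m_0$.

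For $\LT 2 \Rightarrow \LT 3$, let $\{M_i : i \in I\}, U$ be given with $\prod_i M_i \neq \emptyset$, and fix $g_0 \in \prod_i M_i$. Form a structure $N$ on $(\bigsqcup_i M_i) \sqcup I$ with sort predicates, a unary projection $\pi$ onto the $I$-part (acting as the identity on $I$), and each $\calL$-symbol interpreted on same-fiber tuples from the corresponding $M_i$, with $g_0$ providing defaults on cross-fiber inputs. For each $\calL$-formula $\varphi$ let $\varphi^{(\pi=u)}$ denote the relativization of $\varphi$ to the fiber $\pi^{-1}(u)$. Applying $\LT 2(N, U)$ to $\varphi^{(\pi=u)}$ with parameter $[\mathrm{id}_I]$ gives
\[
{}^I N/U \models \varphi^{(\pi=[\mathrm{id}_I])}([f_0], \dotsc, [f_n]) \iff \{i : M_i \models \varphi(f_0(i), \dotsc, f_n(i))\} \in U.
\]
A second induction on $\varphi$, parallel to the first, identifies the left side with $\prod_i M_i/U \models \varphi([f_0], \dotsc, [f_n])$; the existential step replaces a witness $[h] \in {}^I N/U$ with $\pi([h]) = [\mathrm{id}_I]$ by a coherent function in $\prod_i M_i$ using $g_0$.

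The main obstacle in both steps is the bookkeeping in the structural inductions that identify the relevant piece of ${}^I N/U$ (the $P_M$-part in the first step, the $\pi = [\mathrm{id}_I]$-slice in the second) with the target ultrapower or ultraproduct. These inductions are conceptually routine, but they rely crucially on having the default elements $m_0$ and $g_0$ available without any further appeal to choice; this is exactly the role played by the nonempty-product hypothesis built into $\LT 3$.
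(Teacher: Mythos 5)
Your proof is correct, but it takes a genuinely different route from the paper's. The paper first proves a combinatorial characterization (Lemma \ref{2.1+}): for a fixed ultrafilter $U$ over $I$, the full ultraproduct theorem for $U$ is equivalent to the ``partial choice'' principle that every indexed family $\{A_i \mid i \in I\}$ of non-empty sets admits a function $g$ with $\{i \mid g(i) \in A_i\} \in U$ --- the one point where the classical proof of \L o\'s's theorem uses $\AC$. It then derives this principle from $\LT 1$ applied to the single structure $\seq{I \cup \bigcup_{i} A_i; R_1, R_2}$ with $R_2(x,y) \iff x\in I \wedge y \in A_x$, via the sentence $\forall x(R_1(x) \to \exists y\, R_2(x,y))$ evaluated at $[\mathrm{id}]$; this gives $\LT 1 \Rightarrow \LT 3$ in one step. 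Your argument bypasses the choice principle entirely: you encode the whole truth set of $\varphi$ as a predicate $Q$ (resp.\ fiber the family over $I$), transfer a single sentence, and then recover the target ultrapower or ultraproduct inside ${}^I N/U$ by a relativization induction whose existential steps only relocate a single, already-existing witness (adjusted by $m_0$ or $g_0$), so no choice is needed there either. Both are sound. The paper's route is shorter and isolates a principle that is reused repeatedly later (Theorems \ref{4.1} and \ref{4.19}); yours yields the mild refinement that $\LT 2(M,U)$ already follows from $\LT 1(N,U)$ for expansions $N$ of $M$ by finitely many new symbols, and analogously for the step to $\LT 3$. One small point to patch: in the fibred structure for $\LT 2 \Rightarrow \LT 3$, a constant symbol of $\calL$ has no ``same-fiber tuple'' of inputs and so cannot be interpreted fiberwise as written; either reduce to relational languages first by replacing constants and functions with their graphs (the paper's Corollary \ref{2.33} records that this suffices), or replace each constant $c$ by a unary function of the fiber index.
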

Because of this theorem,
we can let $\LT$ denote $\LT 1$, $\LT 2$, and/or $\LT 3$.

We now know that $\LT 1$ implies $\LT 2$,
but the difference between $\LT 1(M, U)$ and $\LT 2(M, U)$ is worthy of attention,
because Spector \cite{Sp} proved that  if $M$ is a model of $\ZF$\footnote{Not necessary an $\in$-model.},
then $\LT 1(M, U)$ is equivalent to $\LT 2(M,U)$.
This result suggests that an implication
$\LT 1 \Rightarrow \LT 2$ is in fact \emph{uniform},
that is, the implication $\LT 1(M, U)\Rightarrow \LT 2(M,U)$ always holds for every $M$ and $U$.
In Section \ref{sec4}, however,  we prove that an implication $\LT 1 \Rightarrow \LT 2$ is not uniform,
in fact this uniform implication is equivalent to $\LT$.
\begin{thm}[In $\ZF$]\label{1.6}
\begin{enumerate}
\item If there is a non-principal ultrafilter $U$ over $\om$ but
the Countable Choice fails,
then there is a first order structure $M$
such that $\LT 1(M, U)$ holds but
$\LT 2(M, U)$ fails.
\item $\LT$ is equivalent to the following statement:
For every first order structure $M$ and every ultrafilter $U$,
if $\LT 1(M, U)$ holds then $\LT 2(M, U)$ holds as well.
\end{enumerate}
\end{thm}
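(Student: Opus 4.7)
For Part (1), the plan is to construct $M$ explicitly from a witness of $\neg\mathrm{CC}$, taking advantage of the non-principal ultrafilter $U$ on $\omega$. Let $\{B_n : n \in \omega\}$ be a family of non-empty sets with no choice function; by a preliminary modification of the family if needed, I would arrange that no $g$ gives $\{n \in \omega : g(n) \in B_n\} \in U$, i.e., no ``almost-choice function'' modulo $U$ exists. Fix an infinite co-infinite set $E \subseteq \omega$ with $\omega \setminus E \in U$ and redefine $B_n = \emptyset$ for $n \in E$. Let $M$ have universe $\omega \sqcup \bigsqcup_n B_n$ in a language with a unary predicate $P$ for $\omega$ and a unary function $\pi$ that is the identity on $\omega$ and sends each $B_n$ to $n$. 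Then $\LT 2(M, U)$ fails at the formula $\varphi(y) \equiv \exists x(\neg P(x) \wedge \pi(x) = y)$ with $y = [\iota]$ and $\iota(n) = n$: the set $\{n : M \models \varphi(n)\}$ equals $\omega \setminus E$ and is in $U$, but any $[g]$ witnessing $\varphi$ in the ultrapower would yield an almost-choice function for $\{B_n\}$ modulo $U$.

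The main difficulty in Part (1) is verifying $\LT 1(M, U)$. The inclusion of empty-fiber points via $E$ is essential: it makes the sentence $\exists y(P(y) \wedge \neg\exists x(\neg P(x) \wedge \pi(x) = y))$ true in $M$, so the empty fiber of $[\iota]$ in the ultrapower does not violate elementary equivalence. The theory of $M$ is described by a short list of structural axioms (infinitely many empty-fiber $P$-elements, infinitely many non-empty-fiber ones, pairwise disjoint fibers, each non-empty fiber of the right cardinality, etc.), all of which transfer to the ultrapower: every element of ${}^\omega M/U$ is either, up to $U$-equivalence, a constant $[c_a]$ whose behavior mirrors that of $a \in M$, or a non-constant element whose fiber in the ultrapower is empty---still consistent with $M$'s theory. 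A careful induction on sentences, using this dichotomy of ultrapower elements, should complete the verification.

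For Part (2), direction ($\Rightarrow$) is immediate: under $\LT$, Theorem \ref{1.3} gives $\LT 1(M, U)$ and $\LT 2(M, U)$ for every $M$ and $U$, so the uniform implication is trivially true. For ($\Leftarrow$), I argue by contraposition: suppose $\LT$ fails; then by Theorem \ref{1.3}, $\LT 2(M_0, U_0)$ fails for some $M_0$ and ultrafilter $U_0$ on some index set $I_0$, yielding a formula $\exists y\psi(y, \vec{x})$ and parameters $[\vec{f}]$ such that the family $\{W_i : i \in I_0\}$ with $W_i = \{y \in M_0 : M_0 \models \psi(y, \vec{f}(i))\}$ admits no almost-choice function modulo $U_0$. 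Applying the Part (1) construction with $\omega$ replaced by $I_0$ and $\{B_n\}$ replaced by $\{W_i\}$ produces a structure $M'$ with $\LT 1(M', U_0)$ but $\neg\LT 2(M', U_0)$---a counterexample to the uniform implication. The main obstacle is adapting Part (1) to an arbitrary index set (where $\mathrm{CC}$ may still hold), but the role of $\neg\mathrm{CC}$ is played directly by the failure of almost-choice for $\{W_i\}$ modulo $U_0$, supplied by $\LT 2(M_0, U_0)$ failing itself, so no extra choice hypothesis is needed.
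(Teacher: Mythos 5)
Your overall architecture is the paper's: a two-sorted ``indices and fibres'' structure in which $\LT 2$ fails at the formula asserting that a fibre is inhabited, $\LT 1$ is verified by matching the structure against its ultrapower, and part (2) is reduced to the part (1) construction by extracting, from a failure of $\LT$, a family with no almost-choice function modulo the ultrafilter (the paper does this via Lemma~\ref{2.1+}). But there are genuine gaps. In part (1), the ``preliminary modification'' you defer is the real content of the paper's Claim~\ref{4.22}: one replaces $B_n$ by $\{n\}\times\prod_{k\le n}B_k$, and what this buys is not merely ``no almost-choice modulo $U$'' but the stronger property that \emph{no infinite subfamily} has a choice function. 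You need the stronger property: your dichotomy (non-constant $P$-points of the ultrapower have empty fibres) is proved by observing that a witness over a non-constant $[f]$ selects from the infinite subfamily $\{B_{f(n)}\}_n$, and the weaker property you state does not rule that out. Also, ``a careful induction on sentences'' is not yet a proof of $M\equiv M^*$; you must either prove your axiom list complete (a back-and-forth argument) or, as the paper does, collapse both universes to countable sets by forcing, build an isomorphism there, and appeal to absoluteness of satisfaction.

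The serious gap is in part (2). Transplanting the part (1) construction to $\{W_i\}_{i\in I_0}$ does not give $\LT 1(M',U_0)$, because over an arbitrary index set the hypothesis is only that the \emph{full} family has no almost-choice modulo $U_0$: a non-constant reindexing $f:I_0\to I_0$ may perfectly well have $\{W_{f(i)}\}_i$ admitting an almost-choice, so the inhabited-fibre points of the ultrapower are no longer exactly the constants, and the multiset of fibre cardinalities of $(M')^*$ need not match that of $M'$. (For instance, if every $W_i$ had exactly five elements of which only three were ``uniformly selectable'', the ultrapower could acquire a fibre of size exactly three that $M'$ lacks, refuting elementary equivalence; nothing in your argument excludes this.) The paper's Claim~\ref{4.1111} repairs precisely this by padding the family with infinitely many fibres of every finite size and infinitely many infinite fibres, indexed by a disjoint copy of $\omega$ --- which incidentally also supplies what your set $E$ is meant to supply, since an infinite $E\subseteq I_0$ with $I_0\setminus E\in U_0$ need not exist (e.g.\ when $I_0$ is amorphous). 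With that padding, in the countable collapse both structures have countably many fibres of each type and the isomorphism goes through; without it, your $M'$ need not satisfy $\LT 1(M',U_0)$.
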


In Sections \ref{sec5} and \ref{sec6}, we reexamine the fact that $\LT$ does not imply $\AC$.
Blass's model satisfies $\LT$ but the Countable Choice fails in his model. 
However there is no non-principal ultrafilter in the model, 
and $\LT$ only holds in the trivial sense.
This observation lead us to ask the following question.
\begin{question}[In $\ZF$]
Does $\LT+\mathsf{WUF}$ or $\LT+\mathsf{WUF(?)}$ deduce (some fragment of) $\AC$?
Where  $\mathsf{WUF}$ is the weak ultrafilter theorem that every infinite set carries a 
non-principal ultrafilter,
and $\mathsf{WUF(?)}$ is  the statement that there is a non-principal ultrafilter over some set\footnote{This notation is due to Herrlich \cite{He}.}.
\end{question}
Concerning this question, Tachtsis \cite{T} 
proved that $\LT+\mathsf{WUF}$
(in fact $\LT+$``there is a non-principal ultrafilter over $\om$'') implies the Countable Choice, and
he asked if
$\LT + \mathsf{WUF}$ implies $\mathsf{AC}_{\mathrm{fin}}$,
where $\mathsf{AC}_{\mathrm{fin}}$ is the statement that
every family of non-empty finite sets has a choice function.

For this question, in Sections \ref{sec5} and \ref{sec6} we provide two answers.
First answer is the following.
For a cardinal $\ka$ and a natural number $n$,
let $\mathsf{AC}_{\ka,n}$ be the statement that
every indexed family $\{A_\alpha \mid \alpha<\ka\}$
of non-empty sets with $\size{A_\alpha} \le n$ has a choice function.

\begin{thm}\label{1.8}
If $\ZFC$ is consistent, then so is
$\ZF+\LT+\mathsf{WUF}+\neg \AC_{\om_1,2}$.
\end{thm}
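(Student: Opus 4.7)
The plan is to construct a symmetric submodel $N$ of a generic extension of a model $V\models\ZFC$, arranged so that an explicit $\omega_1$-sequence of unordered pairs witnesses $\neg\AC_{\omega_1,2}$ in $N$ while both $\LT$ and $\mathsf{WUF}$ survive. A useful reduction is Theorem \ref{1.3}: to verify $\LT$ in $N$ it suffices to verify $\LT 1$, i.e., elementary equivalence of each structure with each of its ultrapowers, which is much less demanding than the full fundamental theorem.

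I would take the forcing $\mathbb{P}$ to be the countable-support, $\sigma$-closed product $\prod_{\alpha<\omega_1}\mathbb{Q}_\alpha$, where each $\mathbb{Q}_\alpha$ independently adds an unordered generic pair $P_\alpha=\{A_\alpha,B_\alpha\}$ of subsets of $\omega_1$. Let $\mathcal{G}\cong(\mathbb{Z}/2)^{\omega_1}$ act on $\mathbb{P}$ by swapping the two coordinates of $\mathbb{Q}_\alpha$ for each $\alpha$ in the support of the permutation, and let the normal filter of supports be generated by the pointwise stabilizers $\fix(s)$ of countable subsets $s\subseteq\omega_1$. Let $N$ denote the resulting symmetric submodel.

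Three of the four required properties follow from standard symmetric-extension calculations. The $\sigma$-closure of $\mathbb{P}$ combined with countable supports yields Dependent Choice in $N$, and $\sigma$-closure also adds no new reals, so $\mathcal{P}(\omega)^N=\mathcal{P}(\omega)^V$; hence every non-principal ultrafilter on $\omega$ in $V$ remains one in $N$. The standard symmetric-name argument shows that $\langle P_\alpha\mid\alpha<\omega_1\rangle\in N$ has no choice function in $N$, giving $\neg\AC_{\omega_1,2}$. For $\mathsf{WUF}$: given any infinite $X\in N$, Dependent Choice furnishes a countably infinite $Y\subseteq X$ with $Y\cong\omega$; then for a non-principal ultrafilter $U_0$ on $\omega$ coming from $V$, the set $\{S\subseteq X\mid S\cap Y\in U_0\}$ belongs to $N$ and is a non-principal ultrafilter on $X$.

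The principal obstacle is verifying $\LT 1$ in $N$. Given $M,U\in N$ with $U$ an ultrafilter on a set $I\in N$, choose hereditarily symmetric names $\dot M,\dot I,\dot U$ sharing a common countable support $s\subseteq\omega_1$, and consider the ZFC intermediate model $V[G\restriction s]$, in which $\LT 3$ is unconditionally available. The strategy is to show that the first-order theory of the ultrapower ${}^I M/U$ computed in $N$ coincides with the theory of an ultrapower computable inside $V[G\restriction s]$: the ``outer'' coordinates of $\mathbb{P}$ (those indexed by $\omega_1\setminus s$) admit the swap action of $\prod_{\alpha\in\omega_1\setminus s}\mathbb{Z}/2$, and this homogeneity should force the truth values of $\mathcal{L}$-sentences in the ultrapower to be decided already by the $s$-indexed fragment of the generic. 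The hardest step, where I expect most of the technical effort to lie, is to execute this homogeneity argument rigorously for elements of $M$ or $I$ whose symmetric names depend genuinely on the generic outside $s$: one must show that the corresponding outer-orbits either are concentrated in $U$-null subsets of $I$ or collapse under the passage to $U$-equivalence classes, so that the diagram of ${}^I M/U$ is already decided over $V[G\restriction s]$. Once this is in place, $\LT 3$ in $V[G\restriction s]$ together with upward absoluteness of elementary equivalence delivers $M\equiv{}^I M/U$ in $N$, completing the proof.
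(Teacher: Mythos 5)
Your model-building for three of the four conjuncts is fine and close in spirit to the paper's: $\sigma$-closure gives $\AC_\om$ (indeed $\mathsf{DC}$) and preserves ultrafilters over $\om$, which yields $\mathsf{WUF}$ exactly as in the paper's proof of Theorem \ref{4.1}, and the swap argument kills $\AC_{\om_1,2}$. The genuine gap is $\LT$, which is the entire content of the theorem, and your sketch for it is not just unfinished but structurally inadequate. By Lemma \ref{2.1+}, $\LT$ in $N$ is equivalent to: for every ultrafilter $U$ over $I$ in $N$ and every indexed family of non-empty sets, some function chooses correctly on a $U$-measure-one set. Applied to your own family $\{P_\alpha \mid \alpha<\om_1\}$, this forces a necessary condition your proposal never addresses: $N$ must have \emph{no} uniform ultrafilter over $\om_1$ (the swap argument that kills choice functions also kills partial choice functions on uncountable sets). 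The paper proves the analogous fact (Lemma \ref{4.5}) using the full bit-flipping automorphism group of $\mathsf{Fn}(\ka\times\ka,2,<\ka)$, which is vastly richer than your group $(\mathbb{Z}/2)^{\om_1}$ of coordinate swaps; with only swaps it is unclear this is even true of $N$, and if it fails, $\LT$ fails outright in your model. Beyond $\om_1$, one must also control ultrafilters over larger cardinals and over non-well-orderable sets. The paper handles the former by assuming $0^{\#}$ does not exist (e.g.\ $V=L$) and invoking Donder's theorem (Fact \ref{4.7}); you start from an arbitrary model of $\ZFC$ and have no substitute. It handles the latter by abandoning the full symmetric submodel in favour of a Blass-style intermediate model $V(\{F_0,F_1\})$, whose key feature (Lemma \ref{5.16+}) is that every set is a surjective image of $\gamma\times{}^{<\om}(\bigcup_\alpha H(\alpha))$, so that every ultrafilter concentrates on a well-orderable, in fact countable, set (Lemma \ref{4.12}); the paper explicitly notes it does not know whether $\LT$ holds in its own full symmetric extension, which is precisely why the intermediate model is introduced. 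Once ``every ultrafilter has a countable measure-one set'' is available, $\LT$ follows cheaply from $\AC_\om$ via Lemma \ref{2.1+}.

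Your specific reduction to $V[G\restriction s]$ also has a concrete defect independent of the above: even when $M$, $I$, $U$ all have names supported on $s$, the ultrapower ${}^IM/U$ as computed in $N$ is built from \emph{all} functions $f\colon I\to M$ lying in $N$, most of which are not in $V[G\restriction s]$. So the structure you would control via $\LT 3$ in the $\ZFC$ model $V[G\restriction s]$ is a proper substructure of the one you must analyse, and elementary equivalence of the smaller ultrapower with $M$ transfers to the larger one only if the inclusion is elementary --- which is essentially the existential-witness problem that $\LT$ is about in the first place. To repair the proof you should follow the paper's route: force with $\mathsf{Fn}(\om_1\times\om_1,2,<\om_1)$ over $L$, use the bit-flip group, and pass to the intermediate model $V(\{F_0,F_1\})$.
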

Hence $\LT+\mathsf{WUF}$
does not imply $\AC_{\om_1,2}$. This gives an answer to Tachtsis's question.

We give a second answer by using large cardinals:
\begin{thm}\label{1.9}
If $\ZFC+$``there is a measurable cardinal'' is consistent, then so is
$\ZF+\LT+\neg \AC_{\om,2}+$``there is a measurable cardinal''$+$
``every ultrafilter is $\sigma$-complete''.
\end{thm}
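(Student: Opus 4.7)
The plan is to construct a symmetric submodel $N$ of a forcing extension of a $\ZFC$ model with a measurable cardinal. Start with $V \models \ZFC$ together with a measurable cardinal $\kappa$ carrying a normal ultrafilter $\mu$. In $V$, define a countable forcing $P$ together with an automorphism group $\calG$ on $P$ and a normal filter $F$ on $\calG$, designed so that the symmetric submodel $N \subseteq V[G]$ has two features: first, a Cohen-style pairs configuration (indices $\om \times 2$ with swap symmetries within pairs) witnessing $\neg \AC_{\om,2}$; and second, Blass-style index-permutation symmetries, as in \cite{B}, ensuring $N$ contains no non-principal ultrafilter on $\om$.

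Once $N$ is in place, the desired properties can be verified in order. The failure of $\AC_{\om,2}$ follows from the standard pairs argument: the sequence $\langle \{a_n, b_n\} : n < \om \rangle$ of unordered pairs has a hereditarily symmetric name, while any putative choice function selecting an orientation per pair is incompatible with the within-pair swap symmetries. The measurability of $\kappa$ in $N$ comes from L\'evy--Solovay: since $|P| = \om < \kappa$, the set $\mu^* := \{ X \in \p(\kappa)^{V[G]} : \exists Y \in \mu \text{ with } Y \subseteq X \}$ is a $\kappa$-complete non-principal ultrafilter in $V[G]$; its restriction to $\p(\kappa)^N$ is definable from $\mu \in V \subseteq N$, so lies in $N$ and witnesses measurability there. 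For ``every ultrafilter is $\sigma$-complete'' in $N$: if some $U \in N$ on a set $X$ failed to be $\sigma$-complete, a countable partition $\{X_n : n < \om\}$ of $X$ into $U$-null pieces would push $U$ forward along $f : X \to \om$ (where $f \upharpoonright X_n \equiv n$) to a non-principal ultrafilter on $\om$, contradicting the Blass-style step.

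For $\LT$ in $N$: by Theorem \ref{1.3} it suffices to establish $\LT 1(M, U)$ for every $M, U \in N$. The principal case is trivial. For non-principal $U$, $\sigma$-completeness (from the previous step) combined with a L\'evy--Solovay-type analysis of $\sigma$-complete ultrafilters under small forcing shows that $U$ extends a $V$-ultrafilter. The elementary equivalence $M \equiv {}^I M / U$ then follows from $\LT$ as it holds in $V$, by tracking how formulas in the language of $M$ evaluate in the ultrapower computed inside $V[G]$ and reflected back to $N$.

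The main obstacle is the joint symmetric construction: $\calG$ must simultaneously support the pair-swap symmetries breaking $\AC_{\om,2}$, the Blass-style permutation symmetries eliminating non-principal ultrafilters on $\om$, and compatibility with L\'evy--Solovay transfer for $\kappa$. Verifying $\LT$ for arbitrary $M \in N$ is the other delicate point, since $M$ need not be well-orderable in $N$; the argument must propagate elementary equivalence through the inductive ultrapower construction using $\sigma$-completeness of $U$ together with the specific structural properties of $N$.
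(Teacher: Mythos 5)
Your overall architecture (symmetric-style submodel of a small forcing extension of a model with a measurable, pairs for $\neg\AC_{\om,2}$, no non-principal ultrafilter on $\om$ forcing $\sigma$-completeness via pushforward, L\'evy--Solovay for measurability) matches the paper's, but the step you defer as ``the other delicate point'' --- verifying $\LT$ --- is precisely where the real content lies, and your sketch of it does not work as stated. ``The elementary equivalence $M \equiv {}^I M/U$ then follows from $\LT$ as it holds in $V$'' is not an argument: the structure $M$ need not lie in $V$, need not be well-orderable in $N$, and there is no direct way to ``reflect'' satisfaction in ${}^IM/U$ back to a $V$-computation. The paper does not verify $\LT 1(M,U)$ structure by structure; instead it uses Lemma \ref{2.1+} and proves the choice-like criterion: for every ultrafilter $U$ in the final model and every indexed family $\{A_s \mid s\in S\}$ of non-empty sets, there is $g$ with $\{s \mid g(s)\in A_s\}\in U$. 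Establishing that criterion requires two ingredients you do not have. First, the final model is \emph{not} the symmetric submodel $\mathrm{HS}^G$ but the intermediate model $V(\{F_0,F_1\})$ generated over $V$ by the pair-sequence $F_0$ and the sequence $F_1$ of local power sets; the paper explicitly warns that it does not know whether $\LT$ holds in $\mathrm{HS}^G$ itself. This choice is what guarantees (i) every ultrafilter concentrates on a well-orderable set, and (ii) every set is a surjective image of some $\gamma\times{}^{<\om}(\bigcup_n H(n))$ with $\bigcup_n H(n)$ fixed by the relevant embeddings. Second, given a $\sigma$-complete $U$ over a cardinal, one needs $U\cap V\in V$; your appeal to ``a L\'evy--Solovay-type analysis'' hides a genuine argument (first upgrade $U$ to $(2^\om)^+$-completeness by coding reals, then apply Tarski's theorem on $\om_1$-saturated $(2^\om)^+$-complete filters to the condition-wise trace of $U$ in $V$). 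With $U'=U\cap V\in V$ in hand, the paper lifts the ultrapower embedding $j:V\to N$ by $U'$ to $j:V(\{F_0,F_1\})\to N(\{F_0,F_1\})$, notes $j(F_0)=F_0$, $j(F_1)=F_1$, $j(\bigcup_n H(n))=\bigcup_n H(n)$, and reads off the required choice function from a $j$-preimage of a witness in $A^*_{[\mathrm{id}]}$ via the surjection from $\gamma\times{}^{<\om}(\bigcup_n H(n))$. None of this is recoverable from your outline.

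A secondary point: you propose to superimpose ``Blass-style index-permutation symmetries'' on top of pair-swap symmetries, but the paper needs only one group of automorphisms, the bit-flip maps $\pi_{\vec X}$ on $\mathsf{Fn}(\om\times\om,2,<\om)$; these simultaneously kill non-principal ultrafilters over $\om$ (Lemma \ref{4.5} with $\ka=\om$) and, together with a fixed ultrafilter $W\in V$ used to split each generic real's orbit into the two halves $d_n,e_n$, produce the unordered pairs refuting $\AC_{\om,2}$. So the joint-symmetry ``main obstacle'' you flag is not actually where the difficulty is; the difficulty is the $\LT$ verification, and your proposal leaves it open.
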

Thus $\LT+\mathsf{WUF(?)}$ does not imply $\AC_{\om,2}$, under the consistency of large cardinals.

In Section \ref{sec7}, we prove the following result, which contrasts with Theorem \ref{1.8} in the view point of the consistency strength.
It also shows that the large cardinal assumption in Theorem \ref{1.9} cannot be eliminated.
Let $\mathsf{AC}_\om$ denote the Countable Choice.

\begin{thm}\label{1.8+}
The following theories are equiconsistent:
\begin{enumerate}
\item $\ZF+\LT+\mathsf{WUF(?)}+\neg \AC_{\om,2}$.
\item $\ZF+\LT+\mathsf{WUF(?)}+\neg \AC_\om$.
\item $\ZFC+$``there is a measurable cardinal''.
\end{enumerate}
\end{thm}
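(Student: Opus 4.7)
The plan is to establish the three implications \((3) \Rightarrow (1) \Rightarrow (2) \Rightarrow (3)\) at the level of consistency. The first two are routine. For \((3) \Rightarrow (1)\), the model produced by Theorem \ref{1.9} already contains a measurable cardinal, and the normal measure on that cardinal is a non-principal ultrafilter, so $\mathsf{WUF(?)}$ automatically holds in it; hence that same model witnesses (1). For \((1) \Rightarrow (2)\), simply note that $\AC_\om$ implies $\AC_{\om,2}$, so $\neg \AC_{\om,2}$ implies $\neg \AC_\om$, and any model of (1) is already a model of (2).

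The substance is the remaining implication \((2) \Rightarrow (3)\). Let $V \models \ZF + \LT + \mathsf{WUF(?)} + \neg \AC_\om$. By the theorem of Tachtsis quoted in the introduction, $\LT$ together with a non-principal ultrafilter on $\om$ forces $\AC_\om$; so $V$ contains no non-principal ultrafilter on $\om$. But $\mathsf{WUF(?)}$ provides a non-principal ultrafilter $U$ on some set $X$, and I would then observe that $U$ must be $\sigma$-complete: otherwise a countable partition $\{A_n \mid n<\om\}$ of $X$ with every $A_n \notin U$ pushes $U$ forward along the map sending $x \in A_n$ to $n$, producing a non-principal ultrafilter on $\om$, a contradiction.

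Next I would build the ultrapower $j : V \to M := \ult(V, U)$ via Scott's trick on class functions $f : X \to V$. Elementarity of $j$ is the step where $\AC$ is classically needed, to pick witnesses in the existential case of \L o\'s's theorem; here it is supplied by $\LT$ applied to the set structures $(V_\alpha, \in)$ for each sufficiently large $\alpha$, combined with rank reflection. The $\sigma$-completeness of $U$ rules out any infinite $\in$-descending sequence in $M$, so $M$ is well-founded and collapses to a transitive class, and since $U$ is non-principal, $j$ is nontrivial. Let $\ka$ be the critical point and set $W = \{A \subseteq \ka \mid \ka \in j(A)\}$; then $W$ is a $\ka$-complete non-principal ultrafilter on $\ka$ in $V$. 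Finally, $L[W]$ is a model of $\ZFC$ in which $W \cap L[W]$ remains a $\ka$-complete non-principal ultrafilter on $\ka$, so $L[W] \models \ZFC + $ ``$\ka$ is measurable,'' giving $\mathrm{Con}(\ZFC + $ measurable cardinal) as desired.

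The main obstacle is the step of forming the proper-class ultrapower $j : V \to M$ and verifying its elementarity without $\AC$: the usual witnessing argument has to be replaced by an appeal to $\LT$ at each rank $V_\alpha$, patched together via reflection. Once this is done, the $\sigma$-completeness of $U$ does all the remaining work in producing well-foundedness, a critical point, a $\ka$-complete measure, and therefore an inner model of $\ZFC$ with a measurable cardinal by the standard $L[W]$ construction.
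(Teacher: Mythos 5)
Your skeleton is the same as the paper's: $(3)\Rightarrow(1)$ via Theorem \ref{1.9}, $(1)\Rightarrow(2)$ trivially, and the substance in $(2)\Rightarrow(3)$, whose first steps (no non-principal ultrafilter over $\om$ by Tachtsis/Theorem \ref{2.1}, hence the ultrafilter $U$ supplied by $\mathsf{WUF(?)}$ is $\sigma$-complete) also match. The gap is in how you pass from the $\sigma$-complete non-principal $U$ to a measurable cardinal. First, your well-foundedness step fails in $\ZF$: in the model at hand $\AC_\om$ is false, so from an ill-founded configuration in $\ult(V,U)$ you cannot in general choose a sequence of representatives $f_0,f_1,\dotsc$ to which $\sigma$-completeness could be applied, and even the statement ``no infinite descending $E$-sequence'' is weaker than the well-foundedness (every non-empty set has an $E$-minimal element) needed for the Mostowski collapse when $\mathsf{DC}$ is unavailable. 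Second, and more seriously, ``$j$ is nontrivial'' does not give you a critical point in $\ZF$: a non-principal ultrafilter can be $\ka$-complete for \emph{every} cardinal $\ka$ (e.g.\ the cofinite ultrafilter on an amorphous set), in which case $j$ fixes every ordinal and no $\ka=\crit(j)$ exists even though $[\mathrm{id}]$ is a new element. Ruling this out is precisely the content of the paper's Proposition \ref{6.32}, which needs its own nontrivial $\LT$-based argument (an ultrapower of $V_\theta$ shown to be well-founded because all of its ordinals are of the form $j(\alpha)$, then collapsed to the identity, contradicting non-principality); you never invoke anything of this kind.

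The paper avoids the class ultrapower altogether in this direction. By Proposition \ref{6.32} there is a least cardinal $\ka$ at which $\ka^+$-completeness of $U$ fails; $\ka$ is uncountable since there is no non-principal ultrafilter over $\om$; and Lemma \ref{6.22} produces $g:S\to\ka$ with every fiber of measure zero, so that $g_*(U)$ is a $\ka$-complete non-principal ultrafilter over the cardinal $\ka$ itself. That makes $\ka$ measurable by definition, and the standard $L[W]$ construction (the Fact in Section 2) then yields the inner model of $\ZFC$ with a measurable cardinal. If you want to keep your ultrapower-of-$V$ approach, you would need to first prove Proposition \ref{6.32} to secure a critical point and to replace the $\sigma$-completeness well-foundedness argument with one that works without $\mathsf{DC}$; as written, both steps are unjustified.
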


We also consider the statement that $\mathsf{WUF(?)}+$``every ultrafilter is $\sigma$-complete''.
We prove that this statement is equiconsistent with $\ZFC$,
but the statement with $\LT$ has a large cardinal strength.

\section{Preliminaries}
{\bf From now on, our base theory is $\ZF$ unless otherwise specified.}
Throughout this paper, a \emph{filter over a set $I$} (ultrafilter, respectively)
will be a proper filter (proper ultrafilter, respectively)
on the Boolean algebra $\p(I)$. An element of an ultrafilter is called \emph{a measure one set}.

A \emph{structure} is a first order structure with some language $\calL$,
and we sometimes identify each symbol in $\calL$ with its interpretation in $M$.
In Sections \ref{sec6} and \ref{sec7}, a set (or class) $X$ will be identified 
with $\in$-structure $\seq{X; \in}$.

Here we recall ultraproducts and ultrapowers.
Let $I$ be a set and $U$ an ultrafilter over $I$.
Let $\{M_i \mid i \in I\}$ be an indexed  family of structures with same language $\calL$.
For $f,g \in \prod_{i \in I} M_i$,
define $f \simeq g$ if $\{i \in I \mid f(i) = g(i) \} \in U$,
this is an equivalence relation on $\prod_{ i \in I} M_i$.
For $f \in \prod_{i \in I} M_i$,
let $[f]$ be the equivalence class of $f$.
The \emph{ultraproduct of $\{M_i \mid i \in I\}$ by $U$},
denoted by $\prod_{ i \in I} M_i/U$, is the structure with language $\calL$ defined as in the following manner.
The base set of the ultraproduct is the set $\{[f] \mid f \in \prod_{i \in I} M_i\}$.
For an $n$-array relation symbol $R \in \calL$,
its interpretation $R^*$ in $\prod_{i \in I} M_i/U$ is defined as:
\[
R^*([f_0],\dotsc, [f_{n}]) \iff \{i \in I \mid M_i \models R(f_0(i),\dotsc, f_{n}(i))\} \in U.
\] 
We define the interpretations of function symbols and constant symbols by a similar manner.

If every $M_i$ is the same structure $M$,
let us denote $\prod_{i \in I} M_i/U$ by ${}^I M/U$, it is called the \emph{ultrapower of $M$ by $U$}.

Here we state various forms of \L o\'s's theorem explicitly.
\begin{define}
Let $U$ be an ultrafilter over a set $I$.
Let $M$ be a first order structure with language $\calL$,
and $\{M_i \mid i\in I\}$ an indexed family of first order structures with language $\calL$.
Let $\LT 1(M, U)$, $\LT 2(M, U)$, 
and $\LT 3(\{M_i \mid i \in I\}, U)$ be the following statements:
\begin{enumerate}
\item $\LT 1(M, U)$: $M$ is elementarily equivalent to ${}^I M/U$.
\item $\LT 2(M, U)$: For every $\calL$-formula $\varphi(v_0,\dotsc, v_n)$ and 
$f_0,\dotsc, f_n \in {}^I M$,
\begin{align*}
{}^I M /U & \models \varphi([f_0],\dotsc, [f_n])\\
& \iff \{i \in I \mid M \models \varphi(f_0(i),\dotsc, 
f_n(i) )\} \in U.
\end{align*}
\item $\LT 3(\{M_i \mid i \in I\}, U)$: If $\prod_{i \in I} M_i \neq \emptyset$, then
for every  $\calL$-formula $\varphi(v_0,\dotsc, v_n)$ and $f_0,\dotsc, f_n  \in \prod_{i \in I} M_i$,
\begin{align*}
\prod_{i \in I} M_i /U & \models \varphi([f_0],\dotsc, [f_n])\\
& \iff \{i \in I \mid M_i \models \varphi(f_0(i),\dotsc, 
f_n(i) )\} \in U.
\end{align*}
\end{enumerate}
Let $\LT  n$ for $n =1,2$ be the statement that
$\LT  n(M, U)$ holds for every structure $M$ and ultrafilter $U$,
and $\LT 3$ be 
that $\LT 3(\{M_i \mid i \in I\}, U)$ holds for every family $\{M_i \mid i \in I\}$ of structures with
same language and every ultrafilter $U$.
\end{define}
We will show that $\LT 1$, $\LT 2$, and $\LT 3$ are equivalent.

A \emph{cardinal} is a well-orderable cardinal.
An ultrafilter $U$ over an infinite cardinal $\ka$ is \emph{uniform} if every
measure one set of $U$ has cardinality $\ka$.

For an infinite cardinal $\ka$,
an ultrafilter $U$ is \emph{$\ka$-complete}
if for every $\alpha<\ka$ and $f:\alpha \to U$,
we have $\bigcap_{\beta<\alpha} f(\beta) \in U$.
$U$ is \emph{$\sigma$-complete} if it is $\om_1$-complete.
Every ultrafilter is $\om$-complete,
and if $U$ is a principal ultrafilter, then $U$ is $\ka$-complete for every cardinal $\ka$.
For an ultrafilter $U$, if $\ka$ is the least cardinal such that $U$ is not $\ka$-complete,
then $\ka$ must be a successor cardinal.

Let $S$, $T$ be sets, and $U$ an ultrafilter over a set $S$.
For a map $f:S \to T$,
put $f_*(U)=\{X \subseteq T \mid f^{-1}(X) \in U\}$.
$f_*(U)$ is an ultrafilter over $T$,
and if $U$ is $\ka$-complete for some cardinal $\ka$,
then $f_*(U)$ is also $\ka$-complete.

\begin{lemma}\label{6.22}
Let $U$ be an ultrafilter over a set $S$ and $\ka$ a cardinal.
Suppose $\ka$ is the least cardinal such that 
there is $f:\ka \to U$ with $\bigcap_{\alpha<\ka} f(\alpha) \notin U$,
that is, $U$ is $\ka$-complete but not $\ka^+$-complete.
Then $\ka$ is regular and there is a function $g:S \to \ka$ 
such that $g^{-1}(\alpha) \notin U$ for every $\alpha<\ka$.
\end{lemma}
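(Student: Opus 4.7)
The plan is to establish regularity and construct $g$ separately. For regularity I would argue by contradiction: suppose $\lambda := \cf(\ka) < \ka$, and fix a cofinal map $h : \lambda \to \ka$, which exists by the very definition of cofinality without any appeal to choice. Given any $f : \ka \to U$, set $g(\beta) := \bigcap_{\alpha < h(\beta)} f(\alpha)$ for $\beta < \lambda$; each $g(\beta)$ is in $U$ by the $\ka$-completeness of $U$ applied to a sequence of length $h(\beta) < \ka$. A second application of $\ka$-completeness to the $\lambda$-sequence $\seq{g(\beta) : \beta < \lambda}$, using that $\lambda < \ka$, yields $\bigcap_{\beta < \lambda} g(\beta) \in U$. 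Since $h$ is cofinal, this intersection coincides with $\bigcap_{\alpha < \ka} f(\alpha)$, contradicting the failure of $\ka^+$-completeness.

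For the construction of $g : S \to \ka$ I would begin from a witness $f : \ka \to U$ with $D := \bigcap_{\alpha<\ka} f(\alpha) \notin U$ and massage it in two canonical steps. First, replace each $f(\alpha)$ by $f(\alpha) \setminus D$; the removed piece is not in $U$, so its complement is, and the new value lies in $U$ as an intersection of two measure one sets. This forces the new total intersection to be empty. Second, replace the resulting sequence by its running intersections $\tilde f(\alpha) := \bigcap_{\beta \le \alpha} f(\beta)$, each of which is in $U$ by $\ka$-completeness; the sequence $\tilde f$ is now decreasing and still has empty total intersection. Rename $\tilde f$ back to $f$.

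Now for every $s \in S$ the set $\{\alpha < \ka : s \notin f(\alpha)\}$ is nonempty, so I define $g(s)$ to be its minimum, which uses only the well-ordering of $\ka$ and hence needs no choice. By construction $g^{-1}(\alpha) \subseteq S \setminus f(\alpha)$ for every $\alpha < \ka$, so $g^{-1}(\alpha) \notin U$, as required. I do not foresee a substantive obstacle; the one thing to be careful about in $\ZF$ is that each step is carried out canonically, which the single definitionally-given cofinal map, the uniformly defined clean-up of $f$, and the minimum over a well-order all ensure.
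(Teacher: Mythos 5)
Your proof is correct and follows essentially the same route as the paper: reduce to the case where the total intersection is empty, pass to the (decreasing) running intersections, which lie in $U$ by the minimality of $\ka$, and send each $s\in S$ to the least stage at which it drops out. The regularity argument via a cofinal map and two applications of $\ka$-completeness is the standard one the paper leaves as "easy," and all steps are canonical, so no choice is used.
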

\begin{proof}
It is easy to show that $\ka$ is regular.
To find a required function $g$,
we may assume $\bigcap_{\alpha<\ka} f(\alpha) =\emptyset$.
For $\gamma<\ka$, let $X_\gamma=\bigcap_{\alpha<\gamma} f(\alpha)$.
By the minimality of $\ka$, we have that $X_\gamma \in U$,
and $\bigcap_{\gamma<\ka} X_\gamma=\emptyset$. 
Now define $g:S \to \ka$ as in the following manner.
For $s \in S$, let $\gamma_s$ be the least ordinal $\gamma$ with $s \notin X_\gamma$.
Let $g(s)=\gamma_s$.
It is easy to check that this $g$ works.
\end{proof}

An uncountable cardinal $\ka$ is a \emph{measurable cardinal}
if $\ka$ carries a $\ka$-complete non-principal ultrafilter.
Every measurable cardinal is regular.
In $\ZFC$,
there is a measurable cardinal if and only if
there is a non-principal $\sigma$-complete ultrafilter,
but this equivalence can fail in $\ZF$, see Section \ref{sec7}.

\begin{fact}
If there is a measurable cardinal,
then there is an inner model of $\ZFC+$``there is a measurable cardinal''.
\end{fact}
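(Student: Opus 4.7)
The plan is to invoke Kunen's classical construction of $L[U]$ and verify that it carries through in $\ZF$. Given a measurable cardinal $\ka$ with witness a $\ka$-complete non-principal ultrafilter $U$ on $\ka$, I form the relativized constructible hierarchy by transfinite recursion: $L_0[U] = \emptyset$, $L_{\alpha+1}[U]$ consists of the subsets of $L_\alpha[U]$ definable from parameters with $U \cap L_\alpha[U]$ as an additional predicate, take unions at limit stages, and set $L[U] = \bigcup_{\alpha \in \mathrm{Ord}} L_\alpha[U]$.

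First I would observe that this construction uses only definability and makes no appeal to $\AC$, so it is available in $\ZF$. A standard induction along the hierarchy produces a definable global well-ordering of $L[U]$, which gives $L[U] \models \ZFC$; this is the usual argument, unchanged from the $\ZFC$-based presentation.

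Next, set $\bar{U} = U \cap L[U]$. I would argue that $\bar{U} \in L[U]$: since $\p^{L[U]}(\ka)$ is a set of $L[U]$ and $U$ is used as a predicate throughout the construction, $\bar{U} = \{ X \in \p^{L[U]}(\ka) : X \in U \}$ is definable at the first stage $L_\alpha[U]$ containing $\p^{L[U]}(\ka)$. I would then verify that $L[U] \models$ ``$\bar{U}$ is a $\ka$-complete non-principal ultrafilter on $\ka$'': the ultraproperty is immediate because every $X \in \p^{L[U]}(\ka)$ also lies in $V$, so either $X$ or $\ka \setminus X$ is in $U$; $\ka$-completeness transfers because a sequence $\seq{X_\beta : \beta < \alpha} \in L[U]$ with $\alpha < \ka$ has the same intersection computed in $V$ or in $L[U]$, and this intersection lies in $U$ by $\ka$-completeness in $V$; and non-principality is inherited directly from $U$. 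Finally, $\ka$ remains uncountable (indeed a cardinal) in $L[U]$, since any bijection in $L[U]$ witnessing otherwise would be a bijection in $V$ as well.

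The main obstacle I expect is verifying that $\bar{U}$ is genuinely a set in $L[U]$ rather than merely a definable class; this is the only real subtlety, and it is resolved by the observation above that $\bar{U}$ is a definable subcollection of the set $\p^{L[U]}(\ka)$, hence appears at the next level of the hierarchy. Everything else is a routine transfer of properties of $U$ in $V$ to $\bar{U}$ in $L[U]$, and no step of the verification invokes $\AC$ in $V$.
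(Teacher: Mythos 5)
Your argument is correct and is exactly the standard proof of this fact, which the paper states without proof as a known result: pass to $L[U]$, note that the relativized constructibility hierarchy and its definable well-ordering need no choice in $V$, and check that $U\cap L[U]$ is an element of $L[U]$ witnessing measurability of $\ka$ there. You correctly isolate the one genuine subtlety (that $U\cap L[U]$ is a set of $L[U]$, appearing one stage above any level containing $\p(\ka)^{L[U]}$), and the transfer of the ultrafilter, non-principality, and $\ka$-completeness properties is handled correctly, so nothing is missing.
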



We recall weak forms of $\AC$.
For a cardinal $\ka$ and a natural number $n$,
let $\mathsf{AC}_{\ka,n}$ be the statement that
every indexed family $\{A_\alpha \mid \alpha<\ka\}$
of non-empty sets with $\size{A_\alpha} \le n$ has a choice function.
$\mathsf{AC}_\ka$ be the statement that
every indexed family $\{A_\alpha \mid \alpha<\ka\}$
of non-empty sets has a choice function.
$\mathsf{AC}_\om$ is called \emph{the Countable Choice}.
\emph{The weak ultrafilter theorem}, denoted by
$\mathsf{WUF}$, is the statement that
every infinite set carries a non-principal ultrafilter.
$\mathsf{WUF(?)}$ is that there is a non-principal ultrafilter over some set.
Clearly $\mathsf{WUF}$ implies $\mathsf{WUF(?)}$.

For a set $X$ and an inner model $M$ of $\ZF$,
let $M(X)$ be the inner model of $\ZF$ containing $X$ and 
all elements of $M$,
namely, $M(X)=\bigcup_{\alpha}L(M_\alpha \cup \mathrm{trcl}(\{X\}))$
where $M_\alpha$ is the set of all sets in $M$ with rank $<\alpha$.
The following follows from the construction of $M(X)$.
\begin{fact}\label{model}
Let $M$ be an inner model of $\ZF$ and $X$ a set.
If $M$ satisfies $\AC$, then for every set $S \in M(X)$,
there is an ordinal $\gamma$ and a surjection $f \in M$
from ${}^{<\om}\gamma  \times {}^{<\om} \mathrm{trcl}(\{X\})$ onto $S$.
\end{fact}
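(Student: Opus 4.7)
The plan is to exploit the layered construction $M(X) = \bigcup_{\alpha} L(M_\alpha \cup Y)$, where I abbreviate $Y = \mathrm{trcl}(\{X\})$, and to run a transfinite induction through the $L(A)$-hierarchy applied to the parameter set $A = M_\alpha \cup Y$. Given $S \in M(X)$, I first fix an ordinal $\alpha \in \mathrm{Ord}^M$ with $S \in L(M_\alpha \cup Y)$, so that $S \in L_\beta(A)$ for some $\beta$. Because $M \models \AC$, I can pick a bijection $b \in M$ between $M_\alpha$ and some ordinal $\delta$; this is the one and only place where the hypothesis on $M$ is used, and it allows $M_\alpha$ to be replaced by an ordinal inside the construction.

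Next I prove by transfinite recursion carried out in $M$ that for every ordinal $\beta$ there is a surjection $F_\beta \in M$ from ${}^{<\om}(\beta+\delta) \times {}^{<\om} Y$ onto $L_\beta(A)$. The base step uses $b$ on $M_\alpha$ together with the identity on $Y$. Successor steps rest on the standard fact that $L_{\beta+1}(A)$ consists of subsets of $L_\beta(A)$ definable from finitely many parameters in $L_\beta(A)$: each such subset is coded by a G\"odel number of a formula together with a finite tuple drawn from $L_\beta(A)$, and by the inductive hypothesis every such tuple is itself coded by a finite sequence from $\beta+\delta$ together with a finite sequence from $Y$, so concatenation and pairing within ${}^{<\om}(\beta+\delta) \times {}^{<\om} Y$ produce $F_{\beta+1}$. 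Limit steps amount to taking unions and re-indexing. Once $F_\beta$ is in hand with $S \in L_\beta(A)$, I compose it with the map that is the identity on $S$ and collapses $L_\beta(A)\setminus S$ to a fixed element $s_0 \in S$; this yields a surjection in $M$ from ${}^{<\om}(\beta+\delta) \times {}^{<\om} Y$ onto $S$, and setting $\gamma := \beta+\delta$ proves the claim.

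The point needing care is ensuring that the recursion genuinely lives inside $M$. This requires the well-ordering of $M_\alpha$ and the enumeration of formulas both to come from $M$, while $Y$ must enter the construction only as a passive index set appearing in the domain of each $F_\beta$ and never as something $M$ is asked to well-order. Since $Y$ is treated uniformly as a parameter and everything else in the recursion is built from ordinals and finite sequences inside $M$, the construction is absolute and stays in $M$; this is precisely the standard observation that in $L(A)$ every set is parameter-definable from ordinals and elements of $A$, packaged in the form the paper needs.
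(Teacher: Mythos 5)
The paper states this Fact without proof (it is dismissed with ``follows from the construction of $M(X)$''), so there is nothing to compare line by line; your argument is the standard one that is implicitly intended --- transfinite induction along the $L_\beta(M_\alpha\cup Y)$-hierarchy, using a well-ordering of $M_\alpha$ supplied by $\AC$ in $M$ and treating $Y=\mathrm{trcl}(\{X\})$ as an unordered block of parameters --- and it is correct. One imprecision, which you inherit from the wording of the statement itself: the surjections $F_\beta$ (and the final $f$) cannot literally lie in $M$ when $Y\not\subseteq M$, since a function belonging to the transitive class $M$ has range contained in $M$, whereas $F_\beta$ is onto $L_\beta(M_\alpha\cup Y)\supseteq Y$. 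The correct conclusion --- and the one the paper actually uses in Lemma \ref{5.16+} --- is that $f\in M(X)$, being definable there from $Y$ together with parameters ($b$, $\beta$, $M_\alpha$) lying in $M$; your closing observation that $Y$ enters only as a passive index set is exactly the right point, but what it buys is $f\in M(X)$, not $f\in M$.
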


\section{\L o\'s's Theorem}\label{sec3}
In this section we prove that $\LT 1$, $\LT 2$, and $\LT 3$ are equivalent.
First we prove the following  useful lemma, a similar result was proved by Spector \cite{Sp}.

\begin{lemma}\label{2.1+}
Let $U$ be an ultrafilter over a set $I$.
Then the following are equivalent:
\begin{enumerate}
\item $\LT 3(\{M_i \mid i \in I\}, U)$ holds for every
indexed family $\{M_i \mid i \in I\}$ of structures with same language.
\item For every indexed family $\{A_i \mid i \in I\}$ of non-empty sets,
there is a function $g$ on $I$ such that
$\{i \in I \mid g(i) \in A_i\} \in U$.
\end{enumerate}
\end{lemma}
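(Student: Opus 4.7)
My plan is to establish the two implications separately; the reverse implication is an induction on formula complexity in the standard \L o\'s{} style, and condition (2) is precisely the fragment of choice invoked in the existential step of the classical $\ZFC$ proof.

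For $(1)\Rightarrow(2)$, given an indexed family $\{A_i \mid i \in I\}$ of nonempty sets, I would manufacture a family of structures to which $\LT 3(\cdot, U)$ can be applied. Work in a language $\calL$ with a single unary predicate $P$, and for each $i \in I$ let $M_i$ be the structure with universe $A_i \cup \{A_i\}$ and $P^{M_i}=A_i$. The product $\prod_{i \in I} M_i$ is nonempty since the function $i \mapsto A_i$ lies in it, so the hypothesis (1) applies. Each $M_i$ satisfies the sentence $\exists x\, P(x)$, hence by $\LT 3(\{M_i \mid i \in I\}, U)$ so does $\prod_{i \in I} M_i/U$; any witness $[f]$ then gives a function $f$ on $I$ with $\{i \in I \mid f(i) \in A_i\} \in U$, which is the required $g$.

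For $(2)\Rightarrow(1)$, fix $\{M_i \mid i \in I\}$ with $\prod_{i \in I} M_i \neq \emptyset$, which already forces every $M_i$ to be nonempty. I would then run induction on the complexity of the $\calL$-formula $\varphi$. The atomic case is immediate from the definition of the interpretations in $\prod_{i \in I} M_i/U$, and the Boolean cases use only that $U$ is an ultrafilter. The only delicate case is $\varphi(\vec v)\equiv \exists x\, \psi(x,\vec v)$. The direction ``$\Leftarrow$'' is a direct consequence of the induction hypothesis. For ``$\Rightarrow$'', suppose
\[
X=\{i \in I \mid M_i \models \exists x\, \psi(x, f_0(i),\dotsc, f_n(i))\} \in U,
\]
and set $A_i=\{x \in M_i \mid M_i \models \psi(x, f_0(i),\dotsc, f_n(i))\}$ for $i \in X$ and $A_i=M_i$ for $i \notin X$. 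Each $A_i$ is nonempty, so applying (2) yields a function $g$ on $I$ with $Y=\{i \in I \mid g(i) \in A_i\}\in U$. Intersecting $Y$ with $X$ gives a measure one set on which $M_i \models \psi(g(i), f_0(i),\dotsc, f_n(i))$, and the induction hypothesis for $\psi$ then produces $\prod_{i \in I} M_i/U \models \psi([g],[f_0],\dotsc,[f_n])$, so $\prod_{i \in I} M_i/U \models \exists x\, \psi$, as required.

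The main obstacle is confined to this existential step of $(2)\Rightarrow(1)$: it is the unique place where $\ZFC$ would pick a choice function from the $A_i$, and hypothesis (2) supplies exactly that choice ``modulo $U$''. A minor but essential point in $(1)\Rightarrow(2)$ is to guarantee $\prod_{i \in I} M_i \neq \emptyset$ before invoking $\LT 3$; including $A_i$ itself as a distinguished element of each $M_i$ supplies a canonical element of the product and avoids any circular appeal to the very choice principle we are trying to derive.
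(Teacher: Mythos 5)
Your proof is correct and takes essentially the same approach as the paper's: the direction $(2)\Rightarrow(1)$ is the standard induction on formulas with (2) supplying exactly the existential witness, and for $(1)\Rightarrow(2)$ the paper likewise adjoins a canonical extra point to each $A_i$ (a single fixed $p\notin\bigcup_{i}A_i$, using the formula $\exists v\,(v\neq [c_p])$ where you use the unary predicate $P$) so that the product is provably nonempty and a witness can be extracted.
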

\begin{proof}
(2) $\Rightarrow$ (1) follows from a standard proof of \L o\'s's fundamental theorem of ultraproducts;
To carry out a proof of \L o\'s's theorem without $\mathsf{AC}$,
a problematic part is the direction that
\begin{align*}
\{ i \in I \mid M_i  \models \exists v \varphi& (f_0(i),\dotsc, f_n(i), v)\} \in U\\
&  \Rightarrow \prod_{i \in I} M_i /U \models \exists v \varphi([f_0],\dotsc, [f_n],v)
\end{align*}
for $f_0,\dotsc, f_n \in \prod_{i \in I} M_i$ and a formula $\varphi(v_0,\dotsc, v_n, v)$.
Now suppose 
$\{ i \in I \mid M_i  \models \exists v \varphi (f_0(i),\dotsc, f_n(i), v)\} \in U$.
Define $A_i$ as: If
$M_i \models \exists v \varphi (f_0(i),\dotsc, f_n(i), v)$,
then let $A_i=\{ x \in M_i \mid 
M_i \models  \varphi (f_0(i),\dotsc, f_n(i), x)\}$.
Otherwise, let $A_i$ be $M_i$.
By (2), we can find a function $g$ on $I$ such that
$\{i \in I \mid g(i) \in A_i\}\in U$.
Clearly $[g]$ witnesses that
$\prod_{i \in I} M_i /U \models \exists v \varphi([f_0],\dotsc, [f_n],v)$.

(1) $\Rightarrow$ (2).
For a given family $\{A_i \mid i \in I\}$,
fix a point $p \notin \bigcup_{i \in I} A_i$,
and let $B_i=A_i \cup\{ p\}$.
We identify each $B_i$ with some structure.
Since $\prod_{i \in I} B_i \neq \emptyset$,
we can apply (1) to the ultraproduct
$\prod_{i \in I} B_i /U$.
For each $B_i$, obviously $\exists v (v \neq p)$ holds.
Let $c_p:I \to \{p\}$ be the constant function.
By (1),
we know $\prod_{i \in I} B_i /U \models \exists v (v \neq [c_p])$,
hence there is $[g] \in \prod_{i \in } B_i/U$
such that $\prod_{i \in I} B_i /U \models [g] \neq [c_p]$.
Then $\{i \in I \mid g(i) \neq p \} \in U$,
so $\{ i \in I \mid g(i) \in A_i\} \in U$.
\end{proof}

Now we prove Theorem \ref{1.3}.
\begin{thm}\label{2.2+}
$\LT 1$ implies $\LT 3$.
\end{thm}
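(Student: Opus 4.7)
The plan is to combine Lemma \ref{2.1+} with an auxiliary structure whose parameter-free elementary theory already encodes an index-by-index choice. By Lemma \ref{2.1+}, in order to establish $\LT 3$ it suffices, given any ultrafilter $U$ over a set $I$ and any indexed family $\{A_i \mid i \in I\}$ of non-empty sets, to produce a single function $g$ on $I$ with $\{i \in I \mid g(i) \in A_i\} \in U$. So I would fix such a $U$ and such a family and aim at this conclusion.

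Next I would build one structure $M$ that talks about ``membership in $A_i$ at the point $i$'' via a closed sentence. After renaming I may assume $I$ is disjoint from $\bigcup_{i \in I} A_i$; let $M$ have universe $I \cup \bigcup_{i \in I} A_i$ in the language with a unary function symbol $\pi$ and a unary predicate symbol $A$, interpreting $\pi$ as the identity on $I$ and by $\pi(a) = i$ for $a \in A_i$, and $A$ as $\bigcup_{i \in I} A_i$. Since each $A_i$ is non-empty, the sentence
\[
\varphi \; \equiv \; \forall x \, \exists y \, \bigl( A(y) \wedge \pi(y) = \pi(x) \bigr)
\]
holds in $M$.

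The last step is to invoke the hypothesis $\LT 1(M, U)$: $M$ is elementarily equivalent to ${}^I M / U$, so $\varphi$ also holds in the ultrapower. Let $\id \colon I \to I \subseteq M$ be the identity map and instantiate $\varphi$ at $[\id]$ to obtain some $g \in {}^I M$ with $A^{*}([g])$ and $\pi^{*}([g]) = \pi^{*}([\id])$. The routine ultrapower computation converts these into $\{i \in I \mid g(i) \in \bigcup_{j \in I} A_j\} \in U$ and $\{i \in I \mid \pi(g(i)) = i\} \in U$; intersecting yields $\{i \in I \mid g(i) \in A_i\} \in U$, which is exactly the conclusion of Lemma \ref{2.1+}(2).

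The main conceptual obstacle---and the reason $\LT 1$ looks a priori weaker than $\LT 3$---is that elementary equivalence transfers only parameter-free sentences, so one cannot directly push a ``witnessed at $[\id]$'' statement from $M$ to the ultrapower. The device I would rely on is to package the whole family $\{A_i\}$ into a single parameter-free universal sentence about $M$, and only \emph{after} the transfer use the fact that the ultrapower carries the distinguished diagonal element $[\id]$ to read off index-by-index information at $U$-almost every coordinate. Once this encoding is in place I expect no further difficulty; the remaining work is the standard unwinding of $\pi^{*}$ and $A^{*}$ in ${}^I M/U$.
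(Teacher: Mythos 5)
Your proposal follows essentially the same route as the paper: reduce via Lemma \ref{2.1+} to finding $g$ with $\{i \in I \mid g(i) \in A_i\} \in U$, encode the family in a single structure on $I \cup \bigcup_{i\in I} A_i$, transfer one parameter-free $\forall\exists$ sentence by $\LT 1$, and instantiate at $[\mathrm{id}]$; the paper does exactly this with two relations $R_1(x) \iff x \in I$ and $R_2(x,y) \iff x \in I \wedge y \in A_x$ and the sentence $\forall x(R_1(x) \to \exists y\, R_2(x,y))$. The one wrinkle in your version is that your unary function $\pi$ is not well defined unless the sets $A_i$ are pairwise disjoint (an element of $A_i \cap A_j$ has no unique $\pi$-value), so you must first replace each $A_i$ by $\{i\} \times A_i$; the paper's purely relational encoding sidesteps this entirely, and as a bonus yields Corollary \ref{2.33}, that $\LT 1$ restricted to relational languages already suffices. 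With that disjointification added, your argument is correct.
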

\begin{proof}
Suppose $\LT 1$.
Fix an ultrafilter $U$ over $I$.
By Lemma \ref{2.1+}, it is sufficient to show that
for every indexed family $\{A_i \mid i \in I\}$ of non-empty sets,
there is a function $g$ on $I$ with
$\{i \in I \mid g(i) \in A_i\} \in U$.

Let $S=I \cup \bigcup_{i \in I} A_i$.
Define the unary relation $R_1 \subseteq S$ and
the binary relation $R_2 \subseteq S^2$ as follows:
$R_1(x) \iff x \in I$, and
$R_2(x,y) \iff x \in I$ and $y \in A_x$.
Let $M$ be the structure $\seq{S;R_1, R_2}$.
Let $M^*=\seq{S^*;R^*_1,R_2^*}$ be the ultrapower of $M$ by $U$,
where $R^*_1 \subseteq S^*$ and $R^*_2\subseteq (S^*)^2$ are relations corresponding to $R_1$ and $R_2$ respectively.

By the definition of $R_1$ and $R_2$,
we have $M \models \forall x(R_1(x) \to \exists y R_2(x,y))$.
By $\LT 1$,
we also have 
$M^* \models \forall x (R_1^*(x) \to \exists y R_2^*(x,y))$.
Let $\mathrm{id}:I \to I$ be the identity function on $I$.
Since $M \models R_1(\mathrm{id}(i))$ for every $i \in I$,
we have $M^* \models R^*_1 ([\mathrm{id}])$.
Hence there is $[g] \in M^*$ such that
$M^* \models R^*_2([\mathrm{id}], [g])$.
By the definition of the ultrapower,
we have
$\{i \in I \mid M \models R_2(i, g(i))\} \in U$.
Then 
$\{ i \in I \mid g(i) \in A_i\} \in U$
 by the definition of $R_2$.
\end{proof}

From now on, we let $\LT$ denote $\LT 1$, $\LT 2$, and/or $\LT 3$.
This does not cause any problem by the theorem.

The following are immediate from the proof of Theorem \ref{2.2+}.
\begin{cor}\label{2.33}
$\LT$ is equivalent to the following weak form of $\LT 1$:
For every ultrafilter $U$ over $I$ and 
structure $M$ with relational language,
$\LT 1(M, U)$ holds.
\end{cor}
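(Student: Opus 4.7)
The plan is to extract the corollary directly from the proof of Theorem \ref{2.2+}. The forward implication is immediate: if $\LT$ (equivalently $\LT 1$ in full) holds, then in particular $\LT 1(M, U)$ holds for every structure $M$ with relational language. So only the reverse implication carries content.

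For the reverse direction, I would inspect the proof of Theorem \ref{2.2+} and note that the auxiliary structure constructed there, namely $M = \seq{S; R_1, R_2}$, has a purely relational language (one unary and one binary relation symbol, with no function or constant symbols). The rest of the argument manipulates elements of the ultrapower (such as $[\mathrm{id}]$ and the witness $[g]$) but never needs to interpret any function or constant symbol of the language of $M$. Hence the only instance of $\LT 1$ invoked in that proof is of the form ``$\LT 1(M, U)$ for a relational structure $M$''.

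Accordingly, under the weaker hypothesis of the corollary, the proof of Theorem \ref{2.2+} goes through unchanged and yields clause (2) of Lemma \ref{2.1+} for the given ultrafilter $U$. Applying the direction (2) $\Rightarrow$ (1) of Lemma \ref{2.1+} then gives $\LT 3(\{M_i \mid i \in I\}, U)$ for every indexed family $\{M_i \mid i \in I\}$ of structures with a common language. Since $U$ was arbitrary, we conclude $\LT 3$, and by Theorem \ref{1.3} (already established as Theorem \ref{2.2+} together with the trivial implications) we obtain full $\LT$.

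There is no real obstacle here; the corollary is essentially a remark about what the proof of Theorem \ref{2.2+} actually uses. The one thing worth double-checking is that no hidden appeal to constant or function symbols sneaks in via, say, the identity function $\mathrm{id}$ on $I$. But $\mathrm{id}$ is used purely as an element of ${}^I S$, not as a symbol in $\calL$, so the language of $M$ remains relational throughout.
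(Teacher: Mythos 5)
Your proposal is correct and matches the paper's intended argument exactly: the paper derives this corollary as an immediate consequence of the proof of Theorem \ref{2.2+}, precisely because the auxiliary structure $\seq{S;R_1,R_2}$ built there is purely relational. Your observation that $\mathrm{id}$ enters only as an element of ${}^I S$ and not as a symbol of the language is the right point to check, and it confirms the argument goes through unchanged.
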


\section{$\LT 1(M, U)$ and $\LT 2(M,U)$}\label{sec4}
In this section, we show that, under some assumption,
there are $M$ and $U$ such that $\LT 1(M, U)$ holds but $\LT 2 (M, U)$ fails.
First we prove it under the assumption that
$\om$ carries a non-principal ultrafilter  but 
the Countable Choice fails. This is (1) of Theorem \ref{1.6}.

\begin{thm}\label{2.1}
Suppose there is a non-principal ultrafilter $U$ over $\om$
but the Countable Choice fails.
Then there is a structure $M$
such that
$\LT 1(M, U)$ holds but
$\LT 2(M, U)$ fails.
\end{thm}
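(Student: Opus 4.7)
The plan is to exploit the failure of $\mathsf{AC}_\om$. Fix a countable family $\{B_n\mid n<\om\}$ of pairwise disjoint non-empty sets with no choice function, and set $A_n=B_n\times\om$; each $A_n$ is then countably infinite, the $A_n$'s are pairwise disjoint, and the family $\{A_n\}$ still admits no choice function (any such choice function would project via the first coordinate to one on $\{B_n\}$). I then build $M=\om\cup\bigcup_{n<\om}A_n$ in the finite relational language $\{P_0,P_1,R\}$, where $P_0$ marks $\om$, $P_1$ marks $\bigcup_n A_n$, and $R(x,y)\iff(\exists k<\om)(x=2k+1\land y\in A_k)$. The crucial design point is the even/odd split: since the even elements of $\om$ have no $R$-neighbour in $M$, the sentence $\forall x(P_0(x)\to\exists y\,R(x,y))$ is \emph{not} in $\mathrm{Th}(M)$, so elementary equivalence alone cannot force a uniform witness for every $P_0^*$-element of $M^*={}^{\om}M/U$.

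To verify $\LT 1(M,U)$ I would axiomatize $\mathrm{Th}(M)$ by: $(P_0,P_1)$ partitions the universe; $R\subseteq P_0\times P_1$; every $P_1$-element has a unique $P_0$-neighbour; every $P_0$-element has either no $P_1$-neighbour or at least $k$ distinct $P_1$-neighbours for every $k<\om$; and for each $k$ there are at least $k$ $P_0$-elements of each of the two kinds (those with and those without $R$-neighbours). Any countable model of this theory is a disjoint union of $\aleph_0$ isolated $P_0$-points and $\aleph_0$ ``stars'', each consisting of a $P_0$-centre with $\aleph_0$ $P_1$-leaves, so the theory is $\aleph_0$-categorical and hence complete by Vaught's test. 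It remains to check that $M^*$ satisfies each axiom, and the only subtle clause is the ``$0$ or $\ge k$'' dichotomy for $P_0^*$-elements. For this I would run a flipping argument: if $[f]\in P_0^*$ is odd-valued almost everywhere and $[g]$ is an $R^*$-neighbour, then $g(n)\in A_{k(n)}$ on a $U$-set where $k(n)=(f(n)-1)/2$; writing $g(n)=(b_n,m_n)\in A_{k(n)}=B_{k(n)}\times\om$ and setting $g_j(n)=(b_n,m_n+j)$ for $j<\om$ gives infinitely many distinct $R^*$-neighbours of $[f]$, all definable from $g$ without any additional choice.

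The failure of $\LT 2(M,U)$ is witnessed by the concrete parameter $f(n)=2n+1$, which is definable in $\mathsf{ZF}$, so $[f]\in P_0^*$ and $\{n<\om\mid M\models\exists y\,R(f(n),y)\}=\om\in U$. However, any $[g]\in M^*$ with $M^*\models R^*([f],[g])$ would satisfy $g(n)\in A_n$ on a set in $U$, and correcting $g$ on a null set gives a function $h\colon\om\to\bigcup_n A_n$ with $h(n)\in A_n$ for every $n<\om$ --- a choice function for $\{A_n\mid n<\om\}$, contradicting the hypothesis. The main technical obstacle is the verification of the ``$0$ or $\ge k$'' axiom in $M^*$: the flipping step crucially relies on the product structure $A_n=B_n\times\om$ supplied by the thickening, and without that thickening one would have to produce further neighbours by some alternative device that does not covertly invoke the very choice function assumed not to exist.
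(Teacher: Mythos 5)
There is a genuine gap in the half of your argument where $\LT 2(M,U)$ is supposed to fail, and it is exactly at the phrase ``correcting $g$ on a null set.'' From $M^*\models R^*([f],[g])$ you only get a set $X\in U$ with $g(n)\in A_n$ for all $n\in X$, i.e.\ a choice function for the \emph{subfamily} $\{A_n\mid n\in X\}$. The complement $\om\setminus X$ may be infinite, and extending $g$ to a total choice function requires picking an element of $A_n$ (equivalently of $B_n$) for every $n$ in that infinite set --- which is precisely an instance of the countable choice you have assumed to fail, so the ``correction'' is circular. Nor does a choice function on an infinite subfamily by itself contradict your hypothesis: a family with no choice function can easily have infinite subfamilies that do admit one (interleave a family $\{E_n\}$ without a choice function with singletons; if $U$ happens to concentrate on the singleton coordinates, an $R^*$-witness $[g]$ for your $[f]$ actually \emph{exists}, and your intended counterexample to $\LT 2$ evaporates). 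The thickening $A_n=B_n\times\om$ does nothing to repair this, since an element of $A_m$ carries no information about $A_n$ for $n\neq m$. What is needed, and what the paper arranges in Claim \ref{4.22}(6), is the stronger property that \emph{no infinite subfamily} has a choice function; this is achieved by taking $B_x=\{x\}\times\prod_{n\le x}E_n$, so that a choice from $B_x$ for infinitely many $x$ projects down to a full choice function for the original family $\{E_n\}$. If you replace your $A_n$ by such cumulative products, your $\LT 2$ argument goes through.

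Your $\LT 1$ half is, by contrast, a legitimately different route from the paper's: you axiomatize the theory, prove $\aleph_0$-categoricity, and invoke Vaught's test, whereas the paper forces both $S$ and $S^*$ to be countable, builds an isomorphism $M\cong M^*$ in the extension, and pulls back elementary equivalence by absoluteness of satisfaction. Your flipping device $g_j(n)=(b_n,m_n+j)$ correctly produces infinitely many $R^*$-neighbours without choice (the paper gets the same effect from constant functions $[c_b]$, $b\in B_a$, since every element of $C^*$ turns out to be of the form $[c_a]$). Two cautions if you pursue this route after fixing the family: Vaught's test as usually proved uses downward L\"owenheim--Skolem, so in $\ZF$ you should justify completeness of your recursive theory $T$ by absoluteness (verify it in $L$ or in a generic extension satisfying $\AC$, then note that ``$T\vdash\sigma$ or $T\vdash\neg\sigma$ for all $\sigma$'' is arithmetic); and once you switch to cumulative products the sets $A_n$ need no longer be countably infinite in any uniform sense, so the axiom ``no neighbour or $\ge k$ neighbours for every $k$'' must be re-verified for the new structure (the paper's Claim \ref{2.8} does this for constant functions).
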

Note that the assumption of this theorem is consistent;
In the first Cohen model, the Countable Choice fails but $\mathsf{BPI}$ holds
(see \cite{AC}).
We also note that 
by this theorem,
if $\LT$ holds and $\om$ carries a non-principal ultrafilter then
the Countable Choice holds,
and this result already proved by Tachtsis (\cite{T}).

\begin{proof}
Take an indexed family $\{E_n \mid n<\om\}$ of non-empty sets 
which has no choice function.
We may assume that $E_0$ is infinite and $\om \cap \bigcup_{n<\om} E_n =\emptyset$.

\begin{claim}\label{4.22}
There is an infinite set $S$, $A \subseteq S$, and $B_x \subseteq S$ for $x \in A$
satisfying the following:
\begin{enumerate}
\item $A$ is countably infinite.
\item For each $x \in A$, the set $B_x$ is infinite.
\item For distinct $x, x' \in A$, $B_x \cap B_{x'}=\emptyset$.
\item $A \cap \bigcup_{x \in A} B_x=\emptyset$.
\item $S \setminus (A \cup \bigcup_{x \in A} B_x)$ 
is infinite.
\item For every infinite $A' \subseteq A$,
the subfamily $\{B_x \mid x \in A'\}$ has no choice function.
\end{enumerate}
\end{claim}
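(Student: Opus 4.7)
My plan is to build $A$ as a tagged copy of $\om$ and, for each index $n < \om$, to set $B_{(a,n)}$ to be a tagged copy of the ``telescoping'' finite product $\prod_{k \le n} E_k$. The key design principle is that a single element of $B_{(a,n)}$ simultaneously supplies elements of $E_0, E_1, \dotsc, E_n$; consequently, any choice function on an infinite subfamily of $\{B_x\}$ will automatically encode enough information to reassemble a choice function for the whole family $\{E_n \mid n < \om\}$, contradicting the hypothesis.

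Concretely, I would fix three distinct ``tag'' objects $a, b, c$ lying outside $\om \cup \bigcup_n E_n$ and define
\begin{align*}
A &= \{a\} \times \om, \\
B_{(a,n)} &= \{b\} \times \{n\} \times \prod_{k \le n} E_k, \\
S &= A \;\cup\; \bigcup_{n < \om} B_{(a,n)} \;\cup\; (\{c\} \times \om).
\end{align*}
Items (1)--(5) will then be immediate from the tagging, together with the fact that a finite product of non-empty sets is non-empty in $\ZF$ and that $E_0$ is infinite (so each $B_{(a,n)}$ is infinite via the projection onto its $E_0$-coordinate).

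The real content is (6). Assuming $A' \subseteq A$ is infinite and $h$ is a choice function on $\{B_x \mid x \in A'\}$, I write $h((a,n)) = (b, n, s_n)$ with $s_n \in \prod_{k \le n} E_k$, and define $f : \om \to \bigcup_k E_k$ by letting $f(k) = s_n(k)$ for the least $n$ with $(a,n) \in A'$ and $n \ge k$ (such $n$ exists since $A'$ is infinite, and $s_n(k)$ makes sense because $k \le n$). Then $f(k) \in E_k$ for each $k$, producing the forbidden choice function for $\{E_n \mid n<\om\}$.

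I do not expect a serious obstacle; the construction is explicit and the diagonal extraction in (6) is elementary. The only point deserving attention is that no hidden choice is invoked: the $B_{(a,n)}$ are defined using only finite products (available in $\ZF$), the choice of ``least $n$'' in (6) uses only the wellorder on $\om$, and the tagging removes any need to pick representatives. Everything takes place comfortably within $\ZF$.
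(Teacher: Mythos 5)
Your construction is essentially identical to the paper's: the paper takes $A=\om$, $B_x=\{x\}\times\prod_{n\le x}E_n$, and $S=(\om+\om)\cup\bigcup_x B_x$ (the extra copy of $\om$ playing the role of your $\{c\}\times\om$), and proves (6) by exactly your diagonal extraction, reading off the $k$-th coordinate of the chosen element of $B_x$ for the least $x\in A'$ with $x\ge k$. The proposal is correct, relying as the paper does on the ambient normalization that $E_0$ is infinite so that each $B_x$ is infinite.
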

\begin{proof}
Let $A=\om$, $B_x=\{x\} \times \prod_{n \le x} E_n$ for $x \in A$,
and $S=(\om+\om) \cup \bigcup_{x \in A} B_x$.
We check that (6) holds.
Let $A'$ be an infinite subset of $A$ ($=\om)$ and
suppose to the contrary that $\{B_x \mid x \in A'\}$ has a choice function $f$.
Define $g(n)$ as follows:
Pick the minimal $x \in A'$ with $x\ge n$,
and let $f(x)=\seq{x, \seq{e_0,\dotsc, e_x}}$.
Then let $g(n)=e_n$.
It is clear that $g$ is a choice function of $\{E_n \mid n<\om\}$,
this is a contradiction.
\end{proof}

Let $S, A, B_x$ be from the claim.
Define the binary relation $R \subseteq S^2$ by $R(x,y) \iff x \in A$ and $y \in B_x$.
Let $M$ be the structure $M=\seq{S; R}$, and
$M^*=\seq{S^*; R^*}$  the ultrapower of $M$ by $U$.

\begin{claim}\label{2.3}
There is a function $f:\om \to S$ and a formula $\varphi(x)$
such that $M \models \varphi(f(n))$ for every $n<\om$,
but  $M^* \models \neg \varphi([f])$. Hence $\mathsf{LT2}(M, U)$ fails.
\end{claim}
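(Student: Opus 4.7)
The plan is to take $\varphi(x)$ to be the formula $\exists y\, R(x,y)$ and to let $f \colon \om \to S$ be a bijection from $\om$ onto $A$, which is available since $A$ is countably infinite by clause (1) of Claim \ref{4.22}.

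First I would verify the easy direction, that $M \models \varphi(f(n))$ for every $n < \om$. This is immediate: $f(n) \in A$, and by clause (2) the set $B_{f(n)}$ is nonempty, so any $y \in B_{f(n)}$ witnesses $R(f(n),y)$ in $M$. In particular $\{n < \om : M \models \varphi(f(n))\} = \om \in U$, so if $\LT 2(M,U)$ held we would have $M^* \models \varphi([f])$.

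The heart of the argument is to refute $M^* \models \varphi([f])$ directly. Suppose, for contradiction, that $M^* \models \exists y\, R^*([f],y)$; then there is some $g \colon \om \to S$ with $M^* \models R^*([f],[g])$. By the definition of the ultrapower this says
\[
X \;:=\; \{\,n < \om : R(f(n),g(n))\,\} \;=\; \{\,n<\om : g(n) \in B_{f(n)}\,\} \;\in\; U.
\]
Since $U$ is non-principal over $\om$, every set in $U$ is infinite, so $X$ is infinite; and since $f$ is a bijection onto $A$, the image $A' := f[X]$ is an infinite subset of $A$. Setting $h(x) := g(f^{-1}(x))$ for $x \in A'$ produces a choice function for the subfamily $\{B_x : x \in A'\}$, directly contradicting clause (6) of Claim \ref{4.22}. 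Hence $M^* \models \neg\varphi([f])$, which, together with the first paragraph, witnesses the failure of $\LT 2(M,U)$.

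The only point requiring attention is choosing $f$ so that $f[X]$ is guaranteed to be infinite for any $X \in U$; taking $f$ bijective does the job because non-principality of $U$ makes every measure-one set infinite. The rest is just unwinding the definitions of $R$, the ultrapower, and the choice-function clause (6), so I do not anticipate a genuine obstacle beyond this bookkeeping.
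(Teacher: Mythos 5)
Your proposal is correct and follows essentially the same route as the paper: the same formula $\exists w\,R(v,w)$, the same bijection $f:\om\to A$, and the same contradiction with clause (6) via the induced choice function on $\{B_x \mid x \in f[X]\}$. No issues.
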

\begin{proof}
Since $A$ is countably infinite, we can take a bijection $f:\om \to A$.
Consider the formula $\varphi(v)$ that $\exists w R(v,w)$.
For each $n<\om$, 
we have that $\exists w R(f(n), w)$ holds in $M$.
If $M^* \models \exists w \varphi([f],w)$, then there is $[g] \in S^*$
with $M^* \models R^*([f], [g])$.
So $X=\{n \mid R(f(n), g(n))\}=\{n \mid g(n) \in B_{f(n)}\}  \in U$,
and $X$ is infinite.
Then the set $A'=\{f(n) \mid n \in X\}$ is infinite and 
$g$ induces  a choice function of $\{B_{x} \mid x \in A'\}$,
this contradicts (6) of Claim \ref{4.22}.
\end{proof}

Let $C^*=\{x \in S^* \mid \exists w R^*(x,w)\}$,
and for $x \in C^*$, let $D_x^*=\{y \in S^* \mid R^*(x,y)\}$.
\begin{claim}
\begin{enumerate}
\item For $[f] \in S^*$, if $[f] \in C^*$ then
$\{n \mid f(n) \in A \} \in U$.
\item For distinct $x, x' \in C^*$, 
we have $D_x^* \cap D_{x'}^*=\emptyset$.
\item $C^* \cap \bigcup_{x \in C^*} D_x^*=\emptyset$.
\end{enumerate}
\end{claim}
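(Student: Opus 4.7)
The plan is to verify each clause by unfolding the definition of $R^*$ in the ultrapower and reducing to the combinatorial properties of $A$ and $\{B_x\}_{x\in A}$ recorded in Claim \ref{4.22}. Importantly, I will use only the atomic case of \L o\'s (which is built into the construction of $M^*$) together with the standard meaning of $\exists$ inside $M^*$; I will \emph{not} invoke $\LT 1(M,U)$ or $\LT 2(M,U)$, since $\LT 2(M,U)$ is precisely what Claim \ref{2.3} has just refuted.

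For (1), if $[f]\in C^*$ then by the definition of $C^*$ there exists $[g]\in S^*$ with $R^*([f],[g])$, so the atomic ultrapower clause gives $\{n<\om \mid R(f(n),g(n))\}\in U$. Since $R(x,y)$ forces $x\in A$, this set is contained in $\{n<\om \mid f(n)\in A\}$, which must therefore lie in $U$. For (2), suppose toward contradiction that $x=[f]$ and $x'=[f']$ are distinct elements of $C^*$ and that some $[g]$ lies in $D_x^*\cap D_{x'}^*$. Then $\{n \mid f(n)\ne f'(n)\}$, $\{n \mid g(n)\in B_{f(n)}\}$, and $\{n \mid g(n)\in B_{f'(n)}\}$ all lie in $U$; intersecting with the two measure-one sets provided by (1) produces an index $n$ at which $f(n)$ and $f'(n)$ are distinct members of $A$ with $g(n)\in B_{f(n)}\cap B_{f'(n)}$, contradicting the disjointness clause (3) of Claim \ref{4.22}. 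Clause (3) of the present claim is handled similarly: if $[g]\in C^*\cap D_{[f]}^*$, then (1) applied to $[g]$ gives $\{n \mid g(n)\in A\}\in U$, while $R^*([f],[g])$ forces $\{n \mid g(n)\in B_{f(n)} \text{ and } f(n)\in A\}\in U$, and the intersection of these two measure-one sets is empty by clause (4) of Claim \ref{4.22}.

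I do not expect any real obstacle: the whole claim just transfers the pointwise disjointness and separation properties of the family $(A,\{B_x\}_{x\in A})$ to the ultrapower by routine measure-one arguments. The one point requiring care is to keep every use of $R^*$ at the level of its defining equivalence, so as not to accidentally smuggle in the failing $\LT 2(M,U)$.
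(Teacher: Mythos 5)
Your proof is correct and follows essentially the same route as the paper: each clause is reduced to the defining (atomic) equivalence for $R^*$ and then to the disjointness properties (3) and (4) of Claim \ref{4.22} via intersections of measure-one sets. Your explicit remark that only the atomic clause of the ultrapower construction is used, never $\LT 2(M,U)$, is exactly the care the paper's argument implicitly takes.
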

\begin{proof}
(1) Since $[f] \in C^*$,
there is $[g] \in S^*$ with $R^*([f], [g])$.
Then $X=\{n \mid R(f(n), g(n)) \} \in U$.
For $n \in X$, we know $R(f(n), g(n))$, thus $f(n) \in A$.
Then $X \subseteq \{n \mid f(n) \in A\}$, and $\{n \mid f(n) \in A\} \in U$.

(2)  Let $[f]=x$ and $[f']=x'$. 
If there is $[g] \in D_x^* \cap D_{x'}^*$,
then $Y=\{n \mid R(f(n), g(n)), R(f'(n), g(n))\} \in U$.
For $n \in Y$, we know $g(n) \in B_{f(n)} \cap B_{f'(n)}$, so
$f(n)=f'(n)$ must hold. Hence $Y \subseteq \{n \mid f(n)=f'(n) \} \in U$,
and $[f]=[f']$.
This is a contradiction.

(3) is similar.
\end{proof}
Note that the converse of (1) does not hold;
If $f:\om \to A$ is a bijection,
then $\{n \mid f(n) \in A\} \in U$ but $[f] \notin C^*$ by the proof of Claim \ref{2.3}.

For $z \in S$, let $c_z:\om \to \{z\}$ be the constant function.
The following is easy to check.
\begin{claim}\label{2.5}
If $z \in A$ then $[c_z] \in C^*$. In particular the assignment $z \mapsto [c_z]$ is an 
injection from $A$ into $C^*$.
\end{claim}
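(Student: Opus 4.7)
The plan is to read both assertions directly off the definitions of the ultrapower relation $R^*$ and of $C^*$. For the first part, I fix $z \in A$ and aim to produce a witness $[g] \in S^*$ with $R^*([c_z],[g])$; the natural candidate is another constant function $c_y$, where $y$ is any element of $B_z$. Since $B_z$ is nonempty by clause~(2) of Claim~\ref{4.22}, one such $y$ exists. Then $R(z,y)$ holds in $M$, so $\{n<\om \mid M \models R(c_z(n), c_y(n))\} = \om \in U$, which yields $R^*([c_z], [c_y])$ by the definition of $R^*$, and hence $[c_z] \in C^*$.

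For injectivity of $z \mapsto [c_z]$ restricted to $A$, I would simply note that for distinct $z, z' \in A$ the constant functions $c_z, c_{z'}$ disagree at every coordinate, so $\{n<\om \mid c_z(n) = c_{z'}(n)\} = \emptyset \notin U$, whence $[c_z] \neq [c_{z'}]$. (Injectivity in fact works for any two distinct points of $S$, since $U$ is proper.)

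The only point that warrants care is the appearance of choice in picking $y \in B_z$: one might worry that, ranging over all $z \in A$, this amounts to a choice function for $\{B_x \mid x \in A\}$, which clause~(6) of Claim~\ref{4.22} explicitly forbids. It does not, because the first assertion is proved pointwise — for each fixed $z$ one needs only a single element of a single nonempty set, and no simultaneous selection is made — so the argument stays inside $\ZF$. Beyond that, the claim is routine; its role, I take it, is to show that $C^*$ contains an injective copy of the countably infinite set $A$, presumably to be combined with Claim~\ref{2.3} (and the pairwise-disjointness of the $D_x^*$) to force a cardinality mismatch that falsifies $\LT 2(M,U)$ while still being compatible with $\LT 1(M,U)$.
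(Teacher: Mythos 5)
Your proof is correct and is exactly the routine verification the paper has in mind (the paper omits it with ``The following is easy to check''): a single element $y \in B_z$ gives $R^*([c_z],[c_y])$, and properness of $U$ gives injectivity of $z \mapsto [c_z]$. Your aside about choice is also right — the witness is chosen pointwise for each fixed $z$, so no choice function for $\{B_x \mid x \in A\}$ is needed.
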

\begin{claim}\label{2.6}
$S^* \setminus (C^* \cup \bigcup_{x \in C^*} D_x^*)$ is infinite.
\end{claim}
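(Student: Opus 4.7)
The plan is to use the infinite set $E = S \setminus (A \cup \bigcup_{x \in A} B_x)$, which is infinite by clause (5) of Claim \ref{4.22}, and show that the constant functions indexed by $E$ already produce infinitely many elements outside $C^* \cup \bigcup_{x \in C^*} D_x^*$. For each $z \in E$, consider $c_z : \om \to \{z\}$, and form $[c_z] \in S^*$.

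First I would verify that $z \mapsto [c_z]$ is injective on $E$: for distinct $z, z' \in E$, the set $\{n < \om \mid c_z(n) = c_{z'}(n)\}$ is empty, hence not in $U$, so $[c_z] \neq [c_{z'}]$. This already furnishes infinitely many candidates.

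Next I would check $[c_z] \notin C^*$. If $[c_z] \in C^*$, then there exists $[g] \in S^*$ with $R^*([c_z], [g])$, giving $\{n \mid R(z, g(n))\} \in U$; but $R(z, g(n))$ forces $z \in A$, contradicting $z \in E$ (since $E \cap A = \emptyset$ by clauses (4) and (5)). A parallel argument shows $[c_z] \notin D_y^*$ for any $y \in C^*$: writing $y = [f]$, membership in $D_y^*$ means $\{n \mid R(f(n), z)\} \in U$, which demands $\{n \mid z \in B_{f(n)}\} \in U$. But $z \in E$ implies $z \notin \bigcup_{x \in A} B_x$, so this set is empty, a contradiction.

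Combining these observations, $\{[c_z] \mid z \in E\}$ is an infinite subset of $S^* \setminus (C^* \cup \bigcup_{x \in C^*} D_x^*)$, and the claim follows. I do not anticipate any real obstacle here; the whole argument is a direct translation of clauses (4) and (5) of Claim \ref{4.22} into the ultrapower via constant functions, and no form of choice is needed because the witnesses are explicitly named by elements of $E$.
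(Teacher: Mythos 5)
Your proof is correct and follows essentially the same route as the paper: the paper also observes that $[c_z] \notin C^* \cup \bigcup_{x \in C^*} D_x^*$ for every $z \in S \setminus (A \cup \bigcup_{x \in A} B_x)$ and concludes from the infinitude of that set (you merely spell out the ``easy to show'' details, including the injectivity of $z \mapsto [c_z]$).
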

\begin{proof}
If $z \in S \setminus(A \cup \bigcup_{x \in A} B_x)$
then it is easy to show that $[c_z] \notin (C^* \cup \bigcup_{x \in C^*} D_x^*)$.
Since $S \setminus(A \cup \bigcup_{x \in A} B_x)$ is infinite,
we have that $S^* \setminus (C^* \cup \bigcup_{x \in C^*} D_x^*)$ is infinite as well.
\end{proof}

\begin{claim}\label{2.7}
For every $x \in C^*$,
there is $a \in A$ with
$[c_a]=x$.
\end{claim}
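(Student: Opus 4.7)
The plan is to fix $x = [f] \in C^*$, choose a witness $[g] \in S^*$ with $R^*([f],[g])$, set $X = \{n < \om \mid R(f(n),g(n))\}$ so that $X \in U$, and then argue by contradiction: if no $a \in A$ satisfies $[c_a] = [f]$, then $\{n \mid f(n) = a\} \notin U$ for every $a \in A$. I would distinguish two cases according to whether the set $A' = \{f(n) \mid n \in X\} \subseteq A$ is finite or infinite.

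In the finite case, $X$ decomposes as the finite union $X = \bigcup_{a \in A'} \{n \in X \mid f(n) = a\}$. Since $X \in U$ and $U$ is an ultrafilter on $\p(\om)$, one of the pieces $\{n \in X \mid f(n) = a_0\}$ must lie in $U$, and then $\{n \mid f(n) = a_0\} \in U$, i.e.\ $[f] = [c_{a_0}]$ with $a_0 \in A$, contradicting our assumption. In the infinite case, $A'$ is an infinite subset of $A$, and I would build a choice function $h$ for $\{B_a \mid a \in A'\}$ as follows: for each $a \in A'$, let $n_a$ be the least element of the set $\{n \in X \mid f(n) = a\}$ (a nonempty subset of $\om$, hence well-ordered without appeal to choice), and put $h(a) = g(n_a)$. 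Because $n_a \in X$ we have $R(f(n_a),g(n_a))$, i.e.\ $h(a) = g(n_a) \in B_{f(n_a)} = B_a$, so $h$ really is a choice function. This contradicts clause (6) of Claim \ref{4.22}.

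The one delicate point — and the main thing to verify carefully — is precisely that the construction in the infinite case is choice-free: we need to select one witness $n_a$ per $a \in A'$ in a $\ZF$-legal way, which is exactly what the well-ordering of $\om$ provides by taking least elements. Once that is observed, both cases close immediately and the claim follows.
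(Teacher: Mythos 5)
Your proof is correct and follows essentially the same route as the paper: the witness $[g]$ with $R^*([f],[g])$ induces a choice function on the infinite subfamily $\{B_a \mid a \in f``X\}$, contradicting clause (6) of Claim \ref{4.22}. You are in fact slightly more explicit than the paper, which tacitly relies on your finite-image case (via the ultrafilter property on finite unions) to justify extracting infinitely many distinct values $f(n_k)$ before letting $g$ induce the choice function.
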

\begin{proof}
Let $x=[f]$ and suppose to the  contrary that there is no $a \in A$ with
$\{n \mid f(n)=c_a(n)=a\} \in U$.
Pick $[g] \in S^*$ with $R^*([f], [g])$.
We know $Y=\{n \mid f(n) \in A, R(f(n), g(n))\} \in U$. 
Then we can take a sequence $n_0<n_1<\cdots$ from $Y$
such that $f(n_k) \neq  f(n_l)$ for every $k<l<\om$.
The map $g$ induces a choice function of the family $\{B_x \mid x \in \{f(n_k) \mid k<\om\} \}$,
this is a contradiction.
\end{proof}

\begin{claim}\label{2.8}
Let $a \in A$ and $x=[c_a] \in C^*$.
Then $D_x^*$ is infinite.
\end{claim}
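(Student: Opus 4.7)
The plan is to exhibit an explicit injection of the (infinite) set $B_a$ into $D_x^*$, using constant functions. Recall from Claim \ref{4.22}(2) that $B_a$ is infinite (this is where we use $a \in A$), and recall the definitions $x = [c_a]$ and $D_x^* = \{y \in S^* \mid R^*(x,y)\}$. For each $b \in B_a$ I will consider the constant function $c_b : \om \to \{b\}$ and verify two things: (i) $[c_b] \in D_x^*$, and (ii) the assignment $b \mapsto [c_b]$ is injective on $B_a$.

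For (i), the key observation is that $\{n < \om \mid R(c_a(n), c_b(n))\} = \{n < \om \mid R(a,b)\}$, and since $b \in B_a$ we have $R(a,b)$, so this set is all of $\om$ and in particular lies in $U$. Hence $R^*([c_a],[c_b])$, i.e., $[c_b] \in D_x^*$. For (ii), if $b \neq b'$ both belong to $B_a$, then $\{n < \om \mid c_b(n) = c_{b'}(n)\} = \emptyset \notin U$, so $[c_b] \neq [c_{b'}]$ in $S^*$. Combining, $\{[c_b] \mid b \in B_a\}$ is an infinite subset of $D_x^*$, which gives the conclusion.

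There is no genuine obstacle here: the argument is just reading off the ultrapower on constants, and the only nontrivial input — the infiniteness of $B_a$ — is given to us by the construction in Claim \ref{4.22}. This differs from Claim \ref{2.3}, which genuinely needed the failure of the choice function on $\{E_n\}$; the present claim, by contrast, is a soft one-line verification.
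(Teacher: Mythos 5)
Your proof is correct and is essentially the paper's own argument: the paper likewise maps elements $b \in B_a$ to the constants $[c_b] \in D_x^*$ via $R(a,b) \Rightarrow R^*([c_a],[c_b])$, phrasing it as ``for each $m$ pick $m$ elements of $B_a$'' rather than as a single injection. Your version just makes the injectivity check ($\{n \mid c_b(n)=c_{b'}(n)\}=\emptyset \notin U$) explicit, which the paper leaves implicit.
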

\begin{proof}
Take an arbitrary $m<\om$.
Pick $m$ many elements $b_0,\dotsc, b_{m-1} \in B_a$.
Then $R(a, b_k)$ holds for every $k < m$,
hence $R^*([c_a], [c_{b_k}])$ holds,
and $[c_{b_0}],\dotsc, [c_{b_{m-1}}] \in D_x^*$.
\end{proof}

Finally we prove $\mathsf{LT1}(M, U)$ holds.
\begin{claim}
$M$ is elementarily equivalent to $M^*$.
\end{claim}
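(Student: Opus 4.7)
My plan is to prove $M \equiv M^*$ by a back-and-forth argument. Let $\calF$ consist of all finite injective partial maps $p : S \to S^*$ satisfying the compatibility conditions: (a) $s \in A$ iff $p(s) \in C^*$; (b) $s \in S \setminus (A \cup \bigcup_{a \in A} B_a)$ iff $p(s) \in S^* \setminus (C^* \cup \bigcup_{x \in C^*} D^*_x)$; (c) if $s \in B_a$ for some $a \in A$ then $a \in \dom(p)$ and $p(s) \in D^*_{p(a)}$; and symmetrically, if $p(s) \in D^*_x$ for some $x \in C^*$ then $x \in \range(p)$. Using the disjointness properties of the $B_x$ (Claim \ref{4.22}) and the parallel disjointness of the $D^*_x$ from the preceding claims, one checks routinely that every $p \in \calF$ is a partial isomorphism for the relation $R$: positive $R$-facts transfer by (c), and negative $R$-facts transfer because any would-be spurious $R^*$-edge would force a violation of (a), (b), or pairwise disjointness.

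Next I verify the forth property (back is symmetric). Given $p \in \calF$ and $s \in S \setminus \dom(p)$, I extend $p$ according to the sort of $s$. If $s$ is isolated, Claim \ref{2.6} furnishes infinitely many isolated elements of $S^*$, only finitely many of which lie in $\range(p)$, giving a valid target $t$. If $s \in A$, then Claim \ref{2.5} shows $C^*$ is countably infinite, so we pick $t \in C^* \setminus \range(p)$. If $s \in B_a$, we first extend to place $a \in \dom(p)$ via the previous case; Claim \ref{2.7} identifies $p(a) \in C^*$ as some $[c_b]$ with $b \in A$, whence Claim \ref{2.8} yields that $D^*_{p(a)}$ is infinite and provides $t \in D^*_{p(a)} \setminus \range(p)$. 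The back direction is analogous: for any new $t \in S^*$ we locate a preimage in $S$ using that each of $S \setminus (A \cup \bigcup_a B_a)$, $A$, and $B_a$ is infinite.

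With the back-and-forth system $\calF$ containing $\emptyset$, a standard finitary induction on formula complexity establishes that every $p \in \calF$ preserves all first-order $\{R\}$-formulas; taking $p = \emptyset$ yields $M \equiv M^*$, completing the proof of the claim (and hence of Theorem \ref{2.1}). The induction is entirely finitary and uses no fragment of choice. The main delicate step is the back direction at $t \in C^* \setminus \range(p)$: the natural candidate preimage, namely the $a \in A$ with $[c_a] = t$ supplied by Claim \ref{2.7}, may already belong to $\dom(p)$ with $p(a) \neq t$, so we cannot reuse it. Instead, any fresh $a' \in A \setminus \dom(p)$ works as preimage, and the closure condition (c) on both sides is precisely what guarantees that $a'$ bears no $R$-relation to any element of $\dom(p)$ while $t$ bears no $R^*$-relation to any element of $\range(p)$, so the extension remains in $\calF$.
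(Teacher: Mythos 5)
Your back-and-forth argument is correct, and it takes a genuinely different route from the paper. The paper passes to a generic collapse $V[G]$ in which $S$ and $S^*$ are both countable, uses Claims \ref{2.5}--\ref{2.8} to assemble an actual isomorphism $j:M\to M^*$ there, and then pulls elementary equivalence back to $V$ by absoluteness of the finitary satisfaction relation. You stay inside $V$ and instead exhibit a system of finite partial isomorphisms with the forth and back properties, resting on exactly the same combinatorial inputs: the bijection $a\mapsto[c_a]$ between $A$ and $C^*$, the infinitude of each $B_a$ and each $D^*_{[c_a]}$, the infinitude of the two ``isolated'' parts, and the pairwise disjointness of the fibers. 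Your verification of the delicate points is right: the closure condition (c) on both sides is what makes a spurious $R^*$-edge impossible, and in the back step at $t\in C^*\setminus\range(p)$ one must indeed take a fresh element of $A$ rather than the canonical preimage from Claim \ref{2.7}, which may already be used. What each approach buys: yours avoids forcing entirely, is visibly choice-free (each quantifier step is a single existential instantiation), and in fact yields the stronger conclusion $M\equiv_{\infty\omega}M^*$; the paper's is shorter to write and its device of building an isomorphism in $V[G]$ that fixes finitely many prescribed points is reused later for the $\LT 1.5$ observation --- though your method handles that too, by running the same induction starting from a nonempty partial map containing the prescribed pairs.
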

\begin{proof}
Take a generic extension $V[G]$ in which both $S$ and $S^*$ are countable.
In $V[G]$, we shall construct an isomorphism from $M$ onto $M^*$.
Then $M$ is elementarily equivalent to $M^*$ in $V[G]$.
On the other hand, the satisfaction relations on $M$ and $M^*$ are absolute between $V$ and $V[G]$,
hence we conclude that $M$ is elementarily equivalent to $M^*$ in $V$.

We work in $V[G]$. Note  the following observations:
\begin{enumerate}
\item The assignment $a \mapsto [c_a]$ gives a bijection from $A$ onto $C^*$ by Claims \ref{2.5} and \ref{2.7}.
\item Since $S \setminus (A \cup \bigcup_{x \in A} B_x)$ and 
$S^* \setminus (C^* \cup \bigcup_{x \in C^*} D_x^*)$ are countably infinite in $V[G]$, 
there is a bijection between them.
\item By Claim \ref{2.8}, 
we have both $B_a$ and $D^*_{[c_a]}$ are countably infinite for every $a \in A$.
Because $S$ and $S^*$ are countable in $V[G]$,
for each $a \in A$, we can take a bijection between $B_a$ and $D^*_{[c_a]}$
in some uniform way.
\end{enumerate}

By these observations,
we can take a bijection $j:S \to S^*$
such that:
\begin{enumerate}
\item $j``A=C^*$.
\item $j``B_x=D^*_{j(x)}$ for every $x \in A$.
\item $j``(S \setminus (A \cup \bigcup_{x \in A} B_x))=
S^* \setminus (C^* \cup \bigcup_{x \in C^*} D_x^*)$.
\end{enumerate}
Then for every $x, y \in S$,
\[
R(x,y) \iff y \in B_x \iff j(y) \in D_{j(x)}^* \iff R^*(j(x), j(y)).
\]
Thus  $j$ is an isomorphism from $M$ onto $M^*$.
\end{proof}
\end{proof}

\begin{note}
Under $\LT$, the Countable Choice follows from the existence of a non-principal ultrafilter over $\om$,
but the converse does not hold:
Pincus-Solovay \cite{PS} constructed a model of $\ZF$ in which the Dependent Choice holds but
there is no non-principal ultrafilter.
In their model, $\LT$ holds trivially.
\end{note}

Next, we prove that if $\LT$ fails 
then $\LT 1(M, U)$ holds but $\LT 2(M,U)$ fails for some $M$ and $U$.
This immediately shows (2) of Theorem \ref{1.6}.

\begin{thm}\label{4.111}
Suppose $\LT$ fails.
Then there is a structure $M$ and 
an ultrafilter $U$ over a set $I$ such that
$\LT 1(M, U)$ holds but $\LT 2(M, U)$ fails.
\end{thm}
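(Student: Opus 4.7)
Since $\LT$ fails, Lemma \ref{2.1+} supplies an ultrafilter $U$ over a set $I$ and a family $\{A_i : i \in I\}$ of non-empty sets such that no function $g$ satisfies $\{i \in I : g(i) \in A_i\} \in U$; such $U$ is necessarily non-principal. Replacing each $A_i$ by $\{i\} \times A_i \times \om$ preserves this property, so I may assume the $A_i$ are pairwise disjoint countably infinite sets disjoint from $I$. I then mimic the construction of Theorem \ref{2.1}: set $A = I$, $B_x = A_x$ for $x \in A$, fix a countably infinite junk set $J$ disjoint from $A \cup \bigcup_x B_x$, put $S = A \cup \bigcup_x B_x \cup J$, and define $R \subseteq S \times S$ by $R(x, y) \iff x \in A \wedge y \in B_x$. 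Let $M = \seq{S; R}$.

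The failure of $\LT 2(M, U)$ is witnessed exactly as in Claim \ref{2.3}: with $\mathrm{id}: I \to A$ the identity and $\varphi(v)$ the formula $\exists w\, R(v, w)$, each $B_i = A_i$ is non-empty so $M \models \varphi(\mathrm{id}(i))$ for every $i \in I$, but any $[g]$ realizing $\varphi([\mathrm{id}])$ in ${}^I M/U$ would produce $\{i : g(i) \in A_i\} \in U$, contradicting the choice of $(U, \{A_i\})$.

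For $\LT 1(M, U)$, I emulate the generic-extension argument of Theorem \ref{2.1}. Writing $M^* = \seq{S^*; R^*}$ for ${}^I M/U$, set $C^* = \{x \in S^* : \exists w\, R^*(x, w)\}$, $D^*_x = \{y : R^*(x, y)\}$ for $x \in C^*$, and $J^* = S^* \setminus (C^* \cup \bigcup_{x \in C^*} D^*_x)$. Mimicking Claims \ref{2.5}--\ref{2.8}, direct mod-$U$ computations in $V$ show: (a) the family $\{C^*\} \cup \{D^*_x : x \in C^*\}$ is pairwise disjoint, by the disjointness of the $B_x$ and of $A$ from $\bigcup_x B_x$; (b) $a \mapsto [c_a]$ injects $A$ into $C^*$; (c) each $D^*_x$ is countably infinite, since given $[g] \in D^*_x$ with $x = [f]$, replacing the $\om$-coordinate of $g(i)$ by each $k \in \om$ (a canonical modification requiring no choice) yields pairwise distinct elements $[g_k] \in D^*_x$; and (d) $\{[c_j] : j \in J\} \subseteq J^*$, so $J^*$ is infinite. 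Forcing with $\col(\om, S) \times \col(\om, S^*)$ yields an extension $V[G]$ in which $S$ and $S^*$, and hence $A, C^*, J, J^*$ and all $B_x, D^*_x$, are countably infinite. Pick bijections $j_0: A \to C^*$, $B_x \to D^*_{j_0(x)}$ for each $x \in A$, and $J \to J^*$; these paste into a bijection $j: S \to S^*$ satisfying $R(x, y) \iff y \in B_x \iff j(y) \in D^*_{j(x)} \iff R^*(j(x), j(y))$, i.e., an isomorphism $M \cong M^*$ in $V[G]$. By absoluteness of the satisfaction relation, $M \equiv M^*$ in $V$.

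The main technical departure from Theorem \ref{2.1} is that the analog of Claim \ref{2.7}, namely $C^* = \{[c_a] : a \in A\}$, is unavailable in this generality: we have no means to extract from a non-$U$-constant $f: I \to I$ a $U$-large set on which $f$ is injective. Happily, this identification is unnecessary---the generic collapse absorbs any cardinality gap between $A$ and $C^*$, and the piecewise bijection construction demands only matching cardinalities of the partition pieces, which (a)--(d) provide. Item (c) is the key replacement: the $\om$-padding built into each $A_i$ circumvents the need for a choice function when constructing infinitely many elements of $D^*_x$.
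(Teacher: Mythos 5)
Your proof is correct and follows the same overall architecture as the paper's: extract a ``bad'' pair $(U,\{A_i\})$ from Lemma \ref{2.1+}, encode it as a bipartite relation $R$ on a set $S$, refute $\LT 2(M,U)$ with the formula $\exists w\,R(v,w)$ applied to $[\mathrm{id}]$, and establish $\LT 1(M,U)$ by building an isomorphism $M\cong M^*$ in a generic extension collapsing $S$ and $S^*$, then quoting absoluteness of satisfaction. The genuine point of divergence is how you make the fibers matchable. The paper takes $A=I\cup\om$ and adjoins auxiliary fibers so that, for each positive $n$, infinitely many $B_x$ have size exactly $n$ and infinitely many are infinite; it then verifies the same spectrum for the $D^*_x$ and matches cardinality classes in $V[G]$. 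You instead pad every fiber with a factor of $\om$, and your observation (c) --- that the $\om$-coordinate of $g$ can be re-set definably, with no appeal to choice, to produce infinitely many distinct elements of $D^*_x$ for \emph{every} $x\in C^*$, not only the constant ones --- makes all fibers on both sides infinite, so the matching is trivial and the paper's cardinality bookkeeping (its substitute for the unavailable Claim \ref{2.7}) is bypassed entirely. This is a clean simplification and buys a shorter verification of $\LT 1(M,U)$. Two harmless inaccuracies: the sets $\{i\}\times A_i\times\om$ are infinite but need not be \emph{countably} infinite in $V$ (you only need that after the collapse, which is where you use it); and, as in the paper, the bijections $B_x\to D^*_{j_0(x)}$ should be taken uniformly from fixed enumerations of $S$ and $S^*$ in $V[G]$ rather than ``picked'' fiber by fiber, since $V[G]$ need not satisfy $\AC$.
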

\begin{proof}
By Lemma \ref{2.1+}, there
is an indexed family 
$\{E_i \mid i \in I\}$ 
of non-empty sets
such that
there is no function $g$ on $I$ with
$\{i \in I \mid g(i) \in E_i\} \in U$.
By using this family, we shall construct a structure $M$ and an ultrafilter $U$ 
such that $\LT 1(M, U)$ holds but
$\LT 2(M, U)$ fails as in the proof of Theorem \ref{2.1}.

First, we may assume that every $i \in I$ is neither an ordinal nor an ordered pair.
Let $A=I \cup \om$.
%
\begin{claim}\label{4.1111}
There is an infinite set $S$ and  non-empty $B_x \subseteq S$ for $x \in A$
satisfying the following:
\begin{enumerate}
\item $A \subseteq S$.
\item The set $\{x  \in A \mid B_x$ is infinite$\}$ is infinite.
\item For every positive $n \in \om$, the set $\{x  \in A \mid \size{B_x}=n\}$ is infinite. 
\item For distinct $x, x' \in A$, $B_x \cap B_{x'}=\emptyset$.
\item $A \cap \bigcup_{x \in A} B_x=\emptyset$.
\item $S \setminus (A \cup \bigcup_{x \in A} B_i)$ 
is infinite.
\item There is no function $g$ with $\dom(g)=I$ and
$\{ i \in I \mid g(i) \in B_i\} \in U$.
\end{enumerate}
\end{claim}
\begin{proof}
Fix an indexed family $\{E_k \mid k<\om\}$ of non-empty sets
such that:
\begin{itemize}
\item The set $\{k<\om \mid E_k$ is infinite$\}$ is infinite.
\item For every positive $n \in \om$,
the set $\{k<\om \mid \size{E_k}=n\}$ is infinite.
\end{itemize}
For $x \in A=I \cup \om$, let $B_x=\{x\} \times E_x$.
Then take a large set $S \supseteq A \cup \bigcup_{x \in A} B_x$
such that
$S \setminus (A \cup \bigcup_{x \in A} B_x)$ 
is infinite. It is easy to check that
$S$ and the $B_x$'s are as required.
\end{proof}
Define $R \subseteq S^2$ by $R(x, y) \iff x \in A$ and $y \in B_x$.
Let $M$ be the structure $\seq{S;R}$, and
$M^*=\seq{S^*; R^*}$ the ultrapower of $M$ by $U$.

\begin{claim}
There is a function $f:I \to S$ and a formula $\varphi(v)$
such that $M \models \varphi(f(i))$ for every $i \in I$,
but  $M^* \models \neg \varphi([f])$. Hence $\mathsf{LT2}(M, U)$ fails.
\end{claim}
\begin{proof}
As in the proof of Claim \ref{2.3},
consider the formula $\varphi(v)$ that $\exists w R(v,w)$.
Let $\mathrm{id}$ be the identity function on $I$.
For each $i \in I$,
we have that $\exists w R(\mathrm{id}(i), w)$ holds in $M$.
If $M^* \models \exists w \varphi([\mathrm{id}],w)$, then there is $[g] \in S^*$
with $M^* \models R^*([\mathrm{id}], [g])$.
Then $\{i \in I \mid R(\mathrm{id}(i), g(i))\}=\{i \in I  \mid g(i) \in B_{i}\}  \in U$,
this contradicts to (7) of Claim \ref{4.1111}.
\end{proof}

Let $C^*=\{x \in S^* \mid \exists w R^*(x,w)\}$,
and $D_x^*=\{y \in S^* \mid R^*(x,y)\}$ for $x \in C^*$.
The following claims can be verified easily.
\begin{claim}
\begin{enumerate}
\item For $[f] \in S^*$, if $[f] \in C^*$ then
$\{i \in I  \mid f(i) \in A \} \in U$.
\item For distinct $x, x' \in C^*$, 
we have $D_x^* \cap D_{x'}^*=\emptyset$.
\item $C^* \cap \bigcup_{x \in C^*} D_x^*=\emptyset$.
\end{enumerate}
\end{claim}

\begin{claim}
$S^* \setminus (C^* \cup \bigcup_{x \in C^*} D_x^*)$ is infinite.
\end{claim}

Recall that $c_z:I \to \{z\}$ is the constant function.
\begin{claim}
If $z \in A$ then $[c_z] \in C^*$. In particular the assignment $z \mapsto [c_z]$ is an 
injection from $A$ into $C^*$, and $C^*$ is infinite.
\end{claim}

\begin{claim}
Let $a \in A$ and $x=[c_a] \in C^*$.
\begin{enumerate}
\item If $B_a$ is infinite, then $D^*_x$ is infinite as well.
\item If $B_a$ is finite, then $D_x^*$ is finite and $\size{B_x}=\size{D_x^*}$.
\end{enumerate}
\end{claim}

By the claims and the choice of the $B_x$'s, we have:
\begin{claim}
\begin{enumerate}
\item The set $\{x \in C^* \mid D^*_x$ is infinite$\}$ is infinite.
\item For every positive $n \in \om$,
the set $\{x \in C^* \mid \size{D^*_x}=n\}$ is infinite.
\end{enumerate}
\end{claim}


\begin{claim}
$M$ is elementarily equivalent to $M^*$.
Hence $\LT 1(M, U)$ holds.
\end{claim}
\begin{proof}
We work in a generic extension $V[G]$ of $V$ in which
$S$ and $S^*$ are countably infinite.
By the claims above, we have:
\begin{enumerate}
\item $A$ and $C^*$ are countably infinite.
\item The sets $\{x \in A \mid B_x$ is countably infinite$\}$ and
$\{x \in C^* \mid D_x^*$ is countably infinite$\}$ are countably infinite.
\item For every positive $n<\om$,
the sets $\{x \in A \mid \size{B_x}=n\}$ and
$\{x \in C^* \mid \size{D_x^*}=n\}$ are countably infinite.
\item $S \setminus (A \cup \bigcup_{x \in A} B_x)$ and
$S^* \setminus (C^* \cup \bigcup_{x \in C^*} D_x^*)$ are countably infinite.
\end{enumerate}
Hence we can construct a bijection $j:S \to S^*$
such that 
\begin{enumerate}
\item $j``A=C^*$.
\item $j``B_x=D_{j(x)}^*$ for $x \in A$.
\item $j``(S \setminus (A \cup \bigcup_{x \in A} B_x))
=S^* \setminus (C^* \cup \bigcup_{x \in C^*} D_x^*)$.
\end{enumerate}
Then $j$ is an isomorphism between $M$ and $M^*$, hence $M$ is elementarily equivalent to $M^*$.
\end{proof}
\end{proof}

To conclude this section,
we see another weak form of \L o\'s's theorem.
Let $M$ be a structure and $U$ an ultrafilter over a set $I$.
The map $i:M \to {}^IM/U$ defined by
$i(x)=[c_x]$ is called the \emph{ultrapower embedding} where
$c_x$ is the constant function with value $x$.
Under $\LT$, every ultrapower embedding is an elementary embedding.
Now we let $\LT 1.5(M, U)$ be the statement that
the ultrapower embedding $i:M \to {}^IM/U$ is an elementary embedding.

It is clear that $\LT 2(M, U) \Rightarrow \LT 1.5(M, U) \Rightarrow \LT 1(M,U)$,
and  Spector \cite{Sp} showed that
$\LT 2(M, U)$, $\LT 1.5(M, U)$, and $\LT 1(M,U)$ are equivalent if $M$ is a model of $\ZF$.
Beside this result, we point out that,
if $M$ and $U$ are from the proof of Theorem \ref{4.111},
then $\LT 1.5(M,U)$ holds but $\LT 2(M,U)$ fails.
Here we sketch the proof.
Let $i:M \to {}^IM/U\,(=M^*)$ be the ultrapower embedding.
Take a formula $\varphi(v_0,\dotsc, v_n)$ and
$x_0,\dotsc, x_n \in M$.
In a generic extension $V[G]$ in which $S$ and $S^*$ are countable,
we can construct an isomorphism $j:M \to M^*$
such that $i(x_k)=j(x_k)$ for every $k\le n$.
Then 
\begin{align*}
M \models \varphi(x_0,\dotsc, x_n) &\iff 
M^* \models \varphi(j(x_0),\dotsc, j(x_n))\\
 &\iff
M^* \models \varphi(i(x_0),\dotsc, i(x_n)).
\end{align*}
So the ultrapower embedding $i$ is an elementary embedding.
This result also shows that
$\LT$ is equivalent to:
For every $M$ and $U$, if $\LT 1.5(M,U)$ holds then
$\LT 2(M,U)$ holds as well.

However we do not know the answer to the following question:
\begin{question}
\begin{enumerate}
\item Is it consistent that
there are $M$ and $U$ such
that $\LT 1(M,U)$ holds but $\LT 1.5(M,U)$ fails?
\item Is $\LT$ equivalent to the following statement?:
For every $M$ and $U$, if $\LT 1(M,U)$ holds then
$\LT 1.5(M,U)$ holds as well.
\end{enumerate}
\end{question}

\section{Choiceless models of $\LT$ (I)}\label{sec5}
To prove Theorems \ref{1.8} and \ref{1.9},
we will use \emph{symmetric extensions} (or \emph{symmetric submodels}).
Here we review it. See Jech \cite{J} for details,
and we follow Karagila's notations in \cite{Kar}.

Let $\bbP$ be a poset with maximum element $\mathbbm 1$.
For a set $x$, let $\check x$ be a canonical name for $x$,
namely, $\check x=\{\seq{\check y, \mathbbm 1} \mid y \in x\}$. 

Every automorphism $\pi$ on $\bbP$
induces the isomorphism $\pi$ on the $\bbP$-names,
namely, $\pi(\dot x)=\{\seq{\pi(\dot y), \pi(p)} \mid \seq{\dot y, p } \in \dot x\}$
for a $\bbP$-name $\dot x$.
\begin{fact}
Let $p \in \bbP$, 
$\varphi$ be a formula of set theory,
and $\dot x_0,\dotsc, \dot x_n$ $\bbP$-names.
Let $\pi$ be an automorphism on $\bbP$.
Then $p \Vdash \varphi(\dot x_0,\dotsc, \dot x_n)$ if and only if
$\pi(p) \Vdash \varphi(\pi(\dot x_0),\dotsc, \pi(\dot x_n))$.
\end{fact}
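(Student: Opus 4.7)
The plan is to prove this by a standard induction on the complexity of $\varphi$, with the base case requiring its own sub-induction on the ranks of the names involved. The underlying intuition is that an automorphism $\pi$ of $\bbP$ preserves the entire forcing apparatus: $\pi$ preserves $\le$, incompatibility, density, and maximality of antichains, and its action on names commutes with forming unordered and ordered pairs via the definition $\pi(\dot x) = \{\seq{\pi(\dot y), \pi(q)} \mid \seq{\dot y, q} \in \dot x\}$. So the symmetry lemma should follow by transporting each clause in the recursive definition of forcing through $\pi$.

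First I would handle the atomic formulas $\dot x = \dot y$ and $\dot x \in \dot y$ simultaneously by induction on the pair of names, using the standard recursive Cohen-style definition of $\Vdash^*$. For instance, $p \Vdash^* \dot x \in \dot y$ iff the set of $q \le p$ for which there exists $\seq{\dot z, s} \in \dot y$ with $q \le s$ and $q \Vdash^* \dot x = \dot z$ is dense below $p$. Applying $\pi$ termwise, $\seq{\pi(\dot z), \pi(s)} \in \pi(\dot y)$, the conditions $q \le s$ translate to $\pi(q) \le \pi(s)$, and density below $p$ translates to density below $\pi(p)$, since $\pi$ is an order-automorphism. The inductive hypothesis applied to the pair $(\dot x, \dot z)$ (of strictly smaller rank) supplies $q \Vdash \dot x = \dot z \Leftrightarrow \pi(q) \Vdash \pi(\dot x) = \pi(\dot z)$, closing the atomic case. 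The case $\dot x = \dot y$ is handled symmetrically using its recursive definition.

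Once atomic formulas are settled, I would induct on the complexity of $\varphi$. The conjunction clause is immediate. For negation, $p \Vdash \neg \varphi$ iff no $q \le p$ forces $\varphi$; since $\pi$ is a bijection of $\bbP$ preserving $\le$, this translates cleanly to: no $q' \le \pi(p)$ forces $\pi(\dot x_0),\dots,\pi(\dot x_n)$ into $\varphi$, i.e.\ $\pi(p) \Vdash \neg \varphi(\pi(\dot x_0),\dots,\pi(\dot x_n))$. For the existential $\exists v\, \varphi(v, \vec{\dot x})$, one uses that $\dot y \mapsto \pi(\dot y)$ is a bijection on the class of all $\bbP$-names (its inverse is $\pi^{-1}$), so ranging $\dot y$ over all names is the same as ranging $\pi(\dot y)$ over all names; combining this with the inductive hypothesis yields the claim.

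The one mild obstacle is bookkeeping in the atomic base case, where one must be careful that the definition of $\Vdash^*$ for $\dot x = \dot y$ refers to names appearing in $\dot x$ and $\dot y$ of strictly smaller rank, so that the simultaneous induction on the pair of names is well-founded; this is routine but is the only place where one really uses the recursive structure of names rather than just the symmetry of $\pi$. Everything else is bookkeeping, using only that $\pi$ preserves the order, the maximum element $\mathbbm 1$, and the class of $\bbP$-names setwise.
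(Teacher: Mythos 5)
Your proposal is correct and is the standard proof of the forcing symmetry lemma: induction on the complexity of $\varphi$, with the atomic cases $\dot x = \dot y$ and $\dot x \in \dot y$ handled by a well-founded induction on name rank through the recursive definition of $\Vdash^*$, using only that $\pi$ is an order-automorphism (hence preserves density below a condition, compatibility, and the class of names under $\dot x \mapsto \pi(\dot x)$). The paper states this as a known Fact without proof, deferring to the references on symmetric extensions (Jech, Karagila), and your argument is precisely the one found there, so there is no divergence to report.
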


Let $\mathcal{G}$ be a subgroup of the automorphism group on $\bbP$.
A non-empty family $\calF$ of subgroups of $\calG$ is 
a \emph{normal filter on $\calG$} if the following hold:
\begin{enumerate}
\item If $H \in \calF$ and $H'$ is a subgroup of $\calG$ with
$H \subseteq H'$, then $H' \in \calF$.
\item For $H, H' \in \calF$, we have $H \cap H' \in \calF$.
\item For every $H \in \calG$ and $\pi \in \calG$,
the set $\pi^{-1} H\pi=\{\pi^{-1} \circ \sigma \circ \pi \mid \sigma \in H\}$ is in $\calF$.
\end{enumerate}
A triple $\seq{\bbP, \calG, \calF}$ is called a \emph{symmetric system}.

For a $\bbP$-name $\dot x$,
let $\mathrm{sym}(\dot x)=\{\pi \in \calG \mid \pi(\dot x)=\dot x\}$,
which is a subgroup of $\calG$.
A name $\dot x$ is \emph{symmetric}
if $\mathrm{sym}(\dot x) \in \calF$,
and \emph{hereditarily symmetric}
if $\dot x$ is symmetric and
for every $\seq{\dot y, p} \in \dot x$,
$\dot y$ is hereditarily symmetric.
\begin{fact}
If $\dot x$ is a hereditarily symmetric name and $\pi \in \calG$,
then $\pi(\dot x)$ is also hereditarily symmetric.
\end{fact}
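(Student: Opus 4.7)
The plan is to prove this by $\in$-induction (equivalently, induction on the rank of the $\bbP$-name $\dot x$). The two things to verify, for a hereditarily symmetric $\dot x$ and $\pi \in \calG$, are that (a) $\mathrm{sym}(\pi(\dot x)) \in \calF$, and (b) every component name appearing in $\pi(\dot x)$ is itself hereditarily symmetric.

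For part (a), I would first compute $\mathrm{sym}(\pi(\dot x))$ explicitly. For any $\sigma \in \calG$, the chain of equivalences
\begin{align*}
\sigma \in \mathrm{sym}(\pi(\dot x)) &\iff \sigma(\pi(\dot x)) = \pi(\dot x) \\
&\iff (\pi^{-1} \circ \sigma \circ \pi)(\dot x) = \dot x \\
&\iff \pi^{-1} \circ \sigma \circ \pi \in \mathrm{sym}(\dot x)
\end{align*}
shows that $\mathrm{sym}(\pi(\dot x)) = \pi \cdot \mathrm{sym}(\dot x) \cdot \pi^{-1}$. Since $\mathrm{sym}(\dot x) \in \calF$ by hypothesis, applying normality (condition (3) of a normal filter) to the element $\pi^{-1} \in \calG$ yields $(\pi^{-1})^{-1} \cdot \mathrm{sym}(\dot x) \cdot \pi^{-1} = \pi \cdot \mathrm{sym}(\dot x) \cdot \pi^{-1} \in \calF$, which is exactly what we need.

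For part (b), recall that $\pi(\dot x) = \{\seq{\pi(\dot y), \pi(p)} \mid \seq{\dot y, p} \in \dot x\}$, so every name $\dot z$ appearing as a first coordinate of a pair in $\pi(\dot x)$ is of the form $\pi(\dot y)$ for some $\dot y$ with $\seq{\dot y, p} \in \dot x$. By the definition of hereditary symmetry for $\dot x$, each such $\dot y$ is itself hereditarily symmetric, and by the inductive hypothesis (applied at a strictly lower rank), $\pi(\dot y)$ is hereditarily symmetric as well. Combining this with part (a) gives the claim.

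I do not expect any serious obstacle: the only place where care is needed is the direction of the conjugation in the normality axiom, and this is resolved by applying condition (3) to $\pi^{-1}$ rather than to $\pi$ itself. The rest is bookkeeping via the recursive definitions.
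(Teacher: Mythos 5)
Your proof is correct: the conjugation computation $\mathrm{sym}(\pi(\dot x)) = \pi\cdot\mathrm{sym}(\dot x)\cdot\pi^{-1}$, the application of normality to $\pi^{-1}$ to get the correctly-oriented conjugate into $\calF$, and the rank induction for the hereditary part are exactly the standard argument. The paper states this as a Fact without proof precisely because this is the routine verification, so there is nothing to compare beyond noting that your write-up supplies it correctly.
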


For $\bbP$-names $\dot x_0,\dotsc, \dot x_n$, let $\{\dot x_0,\dotsc, \dot x_n\}^\bullet$ be the
$\bbP$-name $\{ \seq{\dot x_0, \mathbbm{1}}, \dotsc, \seq{\dot x_n, \mathbbm{1}}\} $,
and $\seq{\dot x_0, \dot x_1}^{\bullet} =
\{ \{\dot x_0\}^\bullet, \{\dot x_0, \dot x_1\}^\bullet \}^\bullet$.
The name $\{\dot x_0,\dotsc, \dot x_n \}^\bullet$ is a canonical name for the family
of the interpretations of the $\dot x_i$'s,
and $ \seq{\dot x_0, \dot x_1 }^\bullet$ is 
for the ordered pair.
If $\dot x_0,\dotsc, \dot x_n$ are hereditarily symmetric,
then so are $\{\dot x_0,\dotsc, \dot x_n\}^\bullet$ and
$\seq{\dot x_0, \dot x_1}^{\bullet}$.

Let $\mathrm{HS}$ be the class of all hereditarily symmetric names.
For a $(V, \bbP)$-generic $G$, let $\mathrm{HS}^G$ be the class  of all interpretations of
hereditarily symmetric names.
$\mathrm{HS}^G$ is a transitive model of $\ZF$ with
$V \subseteq \mathrm{HS}^G \subseteq V[G]$.
$\mathrm{HS}^G$ is called a \emph{symmetric extension of $V$}.
\\

Here we construct our symmetric extension.
Starting from a model of $\ZFC$,
fix a regular cardinal $\ka$.
In a $\ka$-closed forcing extension of $V$,
we shall construct a symmetric extension 
satisfying the following:
\begin{enumerate}
\item $\AC_\la$ holds for every $\la<\ka$.
\item Every infinite cardinal $\la<\ka$ carries a uniform ultrafilter. 
\item Every ultrafilter over $\ka$ has a measure one set with size $<\ka$.
\item If $0^{\#}$ does not exist,
then for every cardinal $\la > \ka$ and ultrafilter $U$ over $\la$, $U$ has 
a measure one set with size $<\ka$.
\item $\AC_{\ka,2}$ fails.
\end{enumerate}
$0^{\#}$ is a set of natural numbers which code 
true formulas about indiscernibles in the G\"odel's constructible universe $L$.
The existence of $0^{\#}$ is a large cardinal property,
e.g., if $0^{\#}$ exists, 
then every uncountable cardinal is inaccessible in $L$.
In particular $0^{\#}$ does not exist in $L$.
It is also known that
if a measurable cardinal exists, then $0^{\#}$ exists.
See Kanamori \cite{K} for details of $0^{\#}$.

Our symmetric extension is a variant of Feferman's model (see \cite{AC} or \cite{J}),
and essentially the same construction is used in Hayut-Karagila \cite{HK}.

We notice that our symmetric extension may not be a model of Theorems \ref{1.8} and \ref{1.9}.
After constructing symmetric extension, 
we will take an intermediate model between the ground model and a symmetric extension,
which is a required model. This argument is based on Blass's one in \cite{B}.


For a set $X$, let $\mathsf{Fn}(X,2, \mathop{<}\ka)$ be the poset of all
partial functions $p:X \to 2$ with size $<\ka$.
The order is given by the reverse inclusion.
It is a $\ka$-closed poset.
We define a symmetric system $\seq{\bbP, \calG, \calF}$ as follows.
Let $\bbP=\mathsf{Fn}(\ka \times \ka,2, \mathop{<}\ka)$.
Let $\vec{X}=\seq{X_\alpha\mid \alpha<\ka}$ be a sequence of
subsets of $\ka$.
Define the automorphism $\pi_{\vec{X}}$ on $\bbP$ 
by : $\dom(\pi_{\vec{X}}(p))=\dom(p)$, and
\[
\pi_{\vec{X}}(p)(\alpha,\beta)=\begin{cases}
1-p(\alpha,\beta) & \text{if $\beta \in X_\alpha$},\\
p(\alpha,\beta) & \text{if $\beta \notin X_\alpha$}.
\end{cases}
\]
We note that for $\vec{X}$ and $\vec{Y}$,
\[
\pi_{\vec{Y}}(\pi_{\vec{X}}(p))(\alpha,\beta)=\begin{cases}
1-p(\alpha,\beta) & \text{if $\beta \in X_\alpha \triangle Y_\alpha$},\\
p(\alpha,\beta) & \text{if $\beta \notin X_\alpha \triangle Y_\alpha$}.
\end{cases}
\]
Where $X_\alpha \triangle Y_\alpha$ is the symmetric difference of $X_\alpha$ and $Y_\alpha$.
Let $\mathcal G$ be the group of
automorphisms $\sigma$ which is of the form $\pi_{\vec{X}}$ for some sequence $\vec{X}$ of subsets of $\ka$.
For a set $e \in [\ka]^{<\ka}$,
let $\mathrm{fix}(e)=\{\pi_{\vec{X}}\mid X_\alpha=\emptyset$ for every $\alpha \in e\}$.
$\mathrm{fix}(e)$ is a subgroup of $\calG$.
Let $\calF$ be the filter on $\mathcal{G}$ generated by
$\{\mathrm{fix}(e) \mid e \in [\ka]^{<\ka} \}$,
that is, for a subgroup $H \subseteq \calG$,
$H \in \calF \iff \mathrm{fix}(e) \subseteq H$ for some set $e \in [\ka]^{<\ka}$.
It is routine to check that $\calF$ is a normal filter on $\calG$.

For $e \subseteq \ka$ and $p \in \bbP$,
let $p\restriction e $ be the function $p \cap (e \times \ka \times 2)$,
which is in $\mathrm{Fn}(e \times \ka, 2, \mathop{<}\ka)$.

\begin{lemma}\label{4.2}
Let $\dot x_0,\dotsc, \dot x_n$ be  hereditarily symmetric names
and $e \in [\ka]^{<\ka}$ with $\mathrm{fix}(e) \subseteq \bigcap_{i \le n} \mathrm{sym}(\dot x_i)$.
Then for every $p \in \bbP$ and formula $\varphi(v_0,\dotsc, v_n)$ of set theory,
if $p$ forces $\varphi(\dot x_0,\dotsc, \dot x_n)$,
then  $p \restriction e$ also forces $\varphi(\dot x_0,\dotsc, \dot x_n)$.
\end{lemma}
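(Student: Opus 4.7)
The plan is to argue by contradiction: assume $p \Vdash \varphi(\dot x_0,\dotsc,\dot x_n)$ but $p\restriction e$ does not force this formula. Then some $q \le p\restriction e$ satisfies $q \Vdash \neg\varphi(\dot x_0,\dotsc,\dot x_n)$. The strategy is to produce an automorphism $\pi \in \mathrm{fix}(e)$ such that $\pi(p)$ is compatible with $q$; since $\pi$ fixes each name $\dot x_i$, the forcing fact recalled just before this lemma will give $\pi(p) \Vdash \varphi(\dot x_0,\dotsc,\dot x_n)$, contradicting the compatibility with $q$.

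To construct $\pi$, I would take $\pi = \pi_{\vec{X}}$, where the sequence $\vec{X} = \seq{X_\alpha \mid \alpha<\ka}$ is defined by $X_\alpha = \emptyset$ for $\alpha \in e$, and
\[
X_\alpha = \{\beta<\ka \mid (\alpha,\beta) \in \dom(p) \cap \dom(q) \text{ and } p(\alpha,\beta) \neq q(\alpha,\beta)\}
\]
for $\alpha \notin e$. Since $|\dom(p)|, |\dom(q)| < \ka$, each $X_\alpha$ is a subset of $\ka$ (of size $<\ka$), so $\pi_{\vec{X}}$ is a legitimate element of $\calG$, and by construction $X_\alpha = \emptyset$ on $e$ puts it in $\mathrm{fix}(e)$.

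Next I would verify that $\pi_{\vec{X}}(p)$ and $q$ are compatible. On coordinates $(\alpha,\beta) \in \dom(p) \cap \dom(q)$ with $\alpha \in e$, the automorphism acts trivially, so $\pi_{\vec{X}}(p)(\alpha,\beta) = p(\alpha,\beta) = q(\alpha,\beta)$ because $q$ extends $p\restriction e$. On coordinates with $\alpha \notin e$, the definition of $X_\alpha$ flips exactly those $\beta$ where $p$ and $q$ disagree, so again the values match. Hence $\pi_{\vec{X}}(p) \cup q \in \bbP$ is a common extension.

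Finally, by the hypothesis $\mathrm{fix}(e) \subseteq \mathrm{sym}(\dot x_i)$ for every $i \le n$, we have $\pi_{\vec{X}}(\dot x_i) = \dot x_i$. Applying the forcing fact to $\pi_{\vec{X}}$ and the statement $p \Vdash \varphi(\dot x_0,\dotsc,\dot x_n)$ gives $\pi_{\vec{X}}(p) \Vdash \varphi(\dot x_0,\dotsc,\dot x_n)$. Combined with $q \Vdash \neg\varphi(\dot x_0,\dotsc,\dot x_n)$ and the compatibility established above, any common extension forces both $\varphi$ and $\neg\varphi$, which is the desired contradiction. The only non-routine step is the explicit definition of $\vec{X}$; once one sees that flipping exactly on the disagreement set reconciles $p$ with $q$ outside $e$, the rest is bookkeeping.
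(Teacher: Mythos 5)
Your proof is correct and follows essentially the same route as the paper: both argue by contradiction, build $\pi_{\vec{X}} \in \mathrm{fix}(e)$ by flipping exactly on the set where $p$ and $q$ disagree outside $e$, and derive a contradiction from the forcing fact. The only cosmetic difference is that the paper applies the automorphism to $q$ and checks compatibility with $p$, whereas you apply it to $p$ and check compatibility with $q$; the two are interchangeable.
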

\begin{proof}
Suppose $p \Vdash \varphi(\dot x_0,\dotsc, \dot x_n)$, but
there is $q \le p \restriction e$ with $q \Vdash \neg \varphi(\dot x_0,\dotsc, \dot x_n)$.
Let $d=\{\alpha \in \ka \setminus e \mid \exists \beta (\seq{\alpha,\beta} \in \dom(p))\}$.
Take a sequence $\seq{X_\alpha \mid \alpha<\ka}$ of subsets of $\ka$ 
such that $X_\alpha=\emptyset$ for $\alpha \notin d$,
and for $\alpha \in d$, $X_{\alpha}= \{\beta<\ka: p(\alpha,\beta) \neq q(\alpha,\beta)\}$.
It is easy to check that $\pi_{\vec{X}}(q)$ is compatible with $p$,
$\pi_{\vec{X}}(\dot x_i)=\dot x_i$ for $i \le n$,
but $\pi_{\vec{X}}(q) \Vdash 
\neg \varphi(\pi_{\vec{X}}(\dot x_0),\dotsc, \pi_{\vec{X}}(\dot x_n))$,
this is a contradiction.
\end{proof}

Take a $(V, \bbP)$-generic $G$ and work in $V[G]$.
Let $\mathrm{HS}^G$ be the class of all interpretations of
hereditarily symmetric names.

Since $\bbP$ is $\ka$-closed, we have $([\ka]^{<\ka})^V=([\ka]^{<\ka})^{V[G]}$.
Hence if $\mu<\ka$ is a cardinal and $U \in V$ is an ultrafilter over $\mu$ in $V$,
then $U$ remains an ultrafilter in $V[G]$. Because $V \subseteq \mathrm{HS}^G\subseteq V[G]$,
we have:
\begin{lemma}\label{4.33}
In $\mathrm{HS}^G$, every infinite cardinal $\mu<\ka$ carries a uniform ultrafilter.
\end{lemma}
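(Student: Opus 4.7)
The plan is to exploit the $\ka$-closedness of $\bbP$ together with the inclusion $V \subseteq \mathrm{HS}^G \subseteq V[G]$. Starting from $V \models \ZFC$, for each infinite cardinal $\mu < \ka$ I can pick in $V$ a uniform ultrafilter $U$ over $\mu$ (using the ultrafilter lemma in $V$). Since $U \in V$ and $V \subseteq \mathrm{HS}^G$, certainly $U \in \mathrm{HS}^G$. What needs checking is that $U$ remains a uniform ultrafilter \emph{from the perspective of} $\mathrm{HS}^G$.

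First I would record the $\ka$-closedness consequence $\p(\mu)^V = \p(\mu)^{V[G]}$, which is the remark already made in the excerpt just before the lemma. Then, since $\p(\mu)^{\mathrm{HS}^G} \subseteq \p(\mu)^{V[G]} = \p(\mu)^V$, the ultrafilter property of $U$ (``for every $A \subseteq \mu$, either $A \in U$ or $\mu \setminus A \in U$'') holds against all $A \in \mathrm{HS}^G$ simply because it holds against all $A \in V$. Closure under intersection and supersets is absolute between the three models. Hence $U$ is an ultrafilter over $\mu$ in $\mathrm{HS}^G$.

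For uniformity, I note that each measure-one set $A \in U$ has $|A| = \mu$ in $V$, witnessed by a bijection $f \in V$; this bijection lies in $\mathrm{HS}^G$, so $|A|^{\mathrm{HS}^G} = \mu$ as well. Finally I would remark that the notion of ``infinite cardinal $\mu < \ka$'' is the same in $V$ and in $\mathrm{HS}^G$, since $\ka$-closed forcing preserves cardinals below $\ka$ and $\mathrm{HS}^G$ lies between $V$ and $V[G]$. There is essentially no obstacle here: the lemma is a direct bookkeeping consequence of $\ka$-closedness and the sandwich $V \subseteq \mathrm{HS}^G \subseteq V[G]$, and the real work of the construction is deferred to the subsequent items in the list of desiderata.
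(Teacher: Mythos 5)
Your proposal is correct and follows essentially the same route as the paper: the paper likewise observes that $\ka$-closedness of $\bbP$ gives $([\ka]^{<\ka})^V=([\ka]^{<\ka})^{V[G]}$, so an ultrafilter over $\mu<\ka$ chosen in $V$ remains a (uniform) ultrafilter in $V[G]$, and hence in the intermediate model $\mathrm{HS}^G$. The only difference is that you spell out the bookkeeping (absoluteness of the ultrafilter property, uniformity via bijections in $V$, cardinal preservation) that the paper leaves implicit.
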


For $\alpha<\ka$, let 
$G_\alpha=G \cap \mathrm{Fn}(\alpha\times \ka, 2,\mathop{<}\ka)$.
$G_\alpha$ is $(V, \mathrm{Fn}(\alpha\times \ka, 2,\mathop{<}\ka))$-generic.
%
\begin{lemma}\label{4.55}
\begin{enumerate}
\item $V[G_\alpha] \subseteq \mathrm{HS}^G$ for every $\alpha<\ka$.
\item For every ordinal $\gamma$ and $x \in \p(\gamma)^{\mathrm{HS}^G}$,
there is $\alpha<\ka$ with $x \in V[G_\alpha]$.
\end{enumerate}
\end{lemma}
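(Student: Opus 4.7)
The plan is to give direct constructions for both parts, using the explicit form of $\mathrm{fix}(e)$ together with Lemma \ref{4.2}. For part (1), I would exhibit an explicit hereditarily symmetric name for $G_\alpha$. The natural candidate is
\[
\dot G_\alpha = \{\langle \check p, p\rangle \mid p \in \mathrm{Fn}(\alpha\times\ka, 2, \mathop{<}\ka)\}.
\]
Any $\pi_{\vec X} \in \mathrm{fix}(\alpha)$ satisfies $X_\beta = \emptyset$ for every $\beta<\alpha$, so $\pi_{\vec X}$ fixes every condition of $\mathrm{Fn}(\alpha\times\ka, 2, \mathop{<}\ka)$ pointwise; since check names are fixed by all of $\calG$, this forces $\pi_{\vec X}(\dot G_\alpha) = \dot G_\alpha$. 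Hence $\mathrm{fix}(\alpha) \subseteq \mathrm{sym}(\dot G_\alpha)$ and $\dot G_\alpha$ is hereditarily symmetric, so $G_\alpha \in \mathrm{HS}^G$. Combined with $V \subseteq \mathrm{HS}^G$ (via check names), this yields $V[G_\alpha] \subseteq \mathrm{HS}^G$.

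For part (2), the key is Lemma \ref{4.2}. Given $x \subseteq \gamma$ in $\mathrm{HS}^G$, fix a hereditarily symmetric name $\dot x$ with $\dot x^G = x$, and choose $e \in [\ka]^{<\ka}$ with $\mathrm{fix}(e) \subseteq \mathrm{sym}(\dot x)$. Since $\ka$ is regular in $V$ and $|e| < \ka$, pick $\alpha < \ka$ with $e \subseteq \alpha$. Noting that $\mathrm{sym}(\check\xi) = \calG$ for every ordinal $\xi$, Lemma \ref{4.2} applied to the formula $\check\xi \in \dot x$ yields, for every $\xi < \gamma$ and $p \in \bbP$,
\[
p \Vdash \check\xi \in \dot x \iff p\restriction e \Vdash \check\xi \in \dot x.
\]
Consequently $\xi \in x$ iff some $q \in G \cap \mathrm{Fn}(e\times\ka, 2, \mathop{<}\ka)$ forces $\check\xi \in \dot x$. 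Since $e \subseteq \alpha$, the fragment $G \cap \mathrm{Fn}(e\times\ka, 2, \mathop{<}\ka)$ is a reduct of $G_\alpha$ and thus lies in $V[G_\alpha]$, while the ground-model forcing relation is definable in $V \subseteq V[G_\alpha]$ from the parameters $\dot x$, $e$, $\gamma$. Hence $x$ is definable in $V[G_\alpha]$, and so $x \in V[G_\alpha]$.

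The only delicate point is the application of Lemma \ref{4.2} with the parameter $\check\xi$ alongside $\dot x$; since $\check\xi$ is fixed by every element of $\calG$ we automatically have $\mathrm{fix}(e) \subseteq \mathrm{sym}(\check\xi) \cap \mathrm{sym}(\dot x)$, so the hypothesis of the lemma is satisfied. Everything else is routine symmetric-extension bookkeeping, and I anticipate no serious obstacle.
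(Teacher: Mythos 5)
Your proof is correct and follows essentially the same route as the paper: part (2) is the identical argument via Lemma \ref{4.2}, and part (1) differs only in that you write down a direct hereditarily symmetric name for $G_\alpha$ (with $\mathrm{fix}(\alpha)\subseteq\mathrm{sym}(\dot G_\alpha)$) instead of first coding $G_\alpha$ as an element of $\p(\ka)^{V[G_\alpha]}$ — an immaterial variation. The points you flag as delicate (invariance of check names, $p\restriction e\in G$ whenever $p\in G$) are handled correctly.
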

\begin{proof}
(1) $G_\alpha$ can be coded in some element of $\p(\ka)^{V[G_\alpha]}$,
and every element $x$ of $\p(\ka)^{V[G_\alpha]}$ has a
$\mathrm{Fn}(\alpha \times \ka, 2, \mathop{<}\ka)$-name $\dot x$,
which is hereditarily symmetric
with $\fix(\alpha) \subseteq \sym(\dot x)$.
Hence $G_\alpha \in \mathrm{HS}^G$, so $V[G_\alpha] \subseteq \mathrm{HS}^G$.

(2) Take a hereditarily symmetric name $\dot x$ for $x$.
Take $e  \in [\ka]^{<\ka}$
with $\mathrm{fix}(e) \subseteq \mathrm{sym}(\dot x)$,
and take a large $\alpha<\ka$ with $\alpha>\sup(e)$.
By Lemma \ref{4.2}, we have that for every $\xi<\gamma$,
\[
\xi \in x \iff p \Vdash \check \xi \in \dot x \text{ for some $p \in G_\alpha$.}
\]
Hence $x$ is definable in $V[G_\alpha]$, and $x \in V[G_\alpha]$.
\end{proof}
Let $x_\alpha=\{\beta \mid \exists p \in G(p(\alpha,\beta)=1)\}$.
The name $\dot x_\alpha=\{ \seq{ \check \beta, p} \mid
p \in \bbP, p(\alpha, \beta)=1\}$
is  a canonical hereditarily symmetric name for $x_\alpha$
with $\mathrm{fix}(\{\alpha\}) \subseteq \mathrm{sym}(\dot x_\alpha)$,
in particular $x_\alpha \in \mathrm{HS}^G$.
For a sequence $\vec{X}=\seq{X_\alpha\mid \alpha<\ka}$ of
subsets of $\ka$ 
and $\alpha<\ka$, if $\ka \setminus X_\alpha \subseteq \xi$
for some $\xi<\ka$, then $\Vdash \pi_{\vec{X}}(\dot x_\alpha) \cap \dot x_\alpha \subseteq \check \xi$.
Similarly, if $X_\alpha \subseteq \xi$ for some $\xi<\ka$,
then
$\Vdash \pi_{\vec{X}}(\check \ka \setminus \dot x_\alpha) \cap (\check \ka \setminus \dot x_\alpha) \subseteq \check \xi$.

\begin{lemma}\label{4.5}
Let $U \in \mathrm{HS}^{G}$ be an ultrafilter over $\ka$ in $\mathrm{HS}^G$.
Then $U$ has a measure one set with size $<\ka$.
\end{lemma}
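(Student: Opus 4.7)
The plan is to exploit the symmetry of $\dot U$ together with the generic information contained in a single $\dot x_\alpha$, $\alpha \notin e$, to force a bounded measure-one set into $U$.

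First, I would fix a hereditarily symmetric name $\dot U$ for $U$ and a set $e \in [\ka]^{<\ka}$ with $\mathrm{fix}(e) \subseteq \mathrm{sym}(\dot U)$. Since $|e|<\ka$, choose any $\alpha \in \ka \setminus e$. The set $x_\alpha \in \mathrm{HS}^G$ is a subset of $\ka$, so either $x_\alpha \in U$ or $\ka \setminus x_\alpha \in U$. I shall treat the case $x_\alpha \in U$; the other case is symmetric (one simply chases $\pi(\check\ka \setminus \dot x_\alpha)$ instead). Pick $p \in G$ with $p \Vdash \dot x_\alpha \in \dot U$. Since $\mathrm{fix}(e \cup \{\alpha\}) \subseteq \mathrm{sym}(\dot x_\alpha) \cap \mathrm{sym}(\dot U)$, Lemma \ref{4.2} gives that $q = p \restriction (e \cup \{\alpha\})$ also forces $\dot x_\alpha \in \dot U$, and $q \in G$. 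Let $\xi<\ka$ be any ordinal with $\{\beta : \langle \alpha, \beta\rangle \in \dom(q)\} \subseteq \xi$; such $\xi$ exists because $|\dom(q)|<\ka$ and $\ka$ is regular.

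The key step is to pick the automorphism $\pi = \pi_{\vec X}$ with $X_\alpha = \ka \setminus \xi$ and $X_\beta = \emptyset$ for $\beta \neq \alpha$. Then $\pi \in \mathrm{fix}(e)$ (because $\alpha \notin e$), so $\pi(\dot U) = \dot U$. Moreover $\pi(q) = q$: on $\dom(q)$, the only coordinates involving $\alpha$ have $\beta<\xi$, where the flipping set $\ka \setminus \xi$ does nothing. Therefore
\[
q \Vdash \pi(\dot x_\alpha) \in \pi(\dot U) = \dot U.
\]
Since $q \in G$, this yields $\pi(\dot x_\alpha)^G \in U$. A direct computation (as noted in the paragraph preceding the lemma) shows $\pi(\dot x_\alpha)^G = X_\alpha \triangle x_\alpha = (\ka \setminus \xi) \triangle x_\alpha$.

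Finally, intersect: $U$ is a filter, so
\[
x_\alpha \cap \bigl((\ka \setminus \xi) \triangle x_\alpha\bigr) = x_\alpha \cap \xi \in U,
\]
and this set has size at most $\xi<\ka$, giving the desired bounded measure-one set. The main (small) obstacle is the bookkeeping in Step 3: verifying that the restricted condition $q$ really is both in the generic $G$ and fixed by $\pi$, and that $\pi$ genuinely lies in $\mathrm{fix}(e)$; once these are in place the set-theoretic identity $a \cap (b \triangle a) = a \cap b^c$ closes the argument immediately.
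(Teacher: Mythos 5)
Your argument is correct and is essentially the paper's own proof: fix $e$ with $\mathrm{fix}(e)\subseteq\mathrm{sym}(\dot U)$, pick $\alpha\notin e$, and use the automorphism flipping coordinate $\alpha$ above $\xi$ to get a second measure-one set $(\ka\setminus\xi)\triangle x_\alpha$ whose intersection with $x_\alpha$ (or, in the other case, with $\ka\setminus x_\alpha$) is contained in $\xi$. The only cosmetic differences are that the restriction to $q=p\restriction(e\cup\{\alpha\})$ via Lemma \ref{4.2} is not needed (the paper just takes $\xi$ large enough to cover the $\alpha$-section of $\dom(p)$), and you conclude $x_\alpha\cap\xi\in U$ where the paper concludes $\xi\in U$; these are interchangeable.
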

\begin{proof}
Take a hereditarily symmetric name $\dot U$ for $U$, and
take $e \in [\ka]^{<\ka}$ with $\mathrm{fix}(e) \subseteq \mathrm{sym}(\dot U)$.
Fix a large $\alpha<\ka$ with $\alpha>\sup(e)$.
If $x_\alpha \in U$,
pick $p \in G$ such that $p \Vdash \dot x_\alpha \in \dot U$.
Take a large $\xi<\ka$
such that $\{\beta \mid \seq{\alpha, \beta} \in \dom(p)\} \subseteq \xi$.
Fix a sequence $\vec{X}=\seq{X_\delta \mid \delta<\ka}$ such that
$X_\delta=\emptyset$ if $\delta \neq \alpha$, 
and $X_\alpha=\ka \setminus \xi$.
Then $\pi_{\vec{X}}(p)=p$
and $p \Vdash$``$\pi_{\vec{X}}(\dot x_\alpha) \in \pi_{\vec{X}}(\dot U)=\dot U$ and
$\pi_{\vec{X}}(\dot x_\alpha) \cap \dot x_\alpha \subseteq \check \xi$.''
Hence $p \Vdash \check \xi \in \dot U$.
The case that $x_\alpha \notin U$ is similar.
\end{proof}

To prove Lemma \ref{4.8} below,
we need the notion of \emph{regular ultrafilters}.
\begin{define}
Let $U$  be an ultrafilter over a set $S$,
and $\la, \mu$  infinite cardinals.
$U$ is said to be \emph{$(\la, \mu)$-regular} if
there is an indexed family $\{X_\alpha \mid \alpha<\mu\} \subseteq U$
such that $\bigcap_{\alpha \in A} X_\alpha =\emptyset$ for every $A \in [\mu]^\la$.
\end{define}
\begin{fact}[Donder \cite{D}, in $\ZFC$]\label{4.7}
Let $\nu$ be a cardinal $ \ge \om_1$.
If there is a uniform ultrafilter over $\nu$ which is 
not $(\om, \mu)$-regular for some cardinal $\mu<\nu$,
then there is an inner model of a measurable cardinal.
In particular $0^{\#}$ exists.
\end{fact}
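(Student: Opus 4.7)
The plan is to prove the contrapositive: assuming there is no inner model with a measurable cardinal, I would show that every uniform ultrafilter $U$ on a cardinal $\nu \ge \om_1$ is $(\om, \mu)$-regular for every infinite $\mu<\nu$. Under this hypothesis one has access to the Dodd--Jensen core model $K$ together with its covering lemma, which says that every uncountable set of ordinals in $V$ is contained in a set of $K$ of the same cardinality, and that $K$ is rigid (no non-trivial $j:K\to K$).

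First I would form the ultrapower $j:V \to M = \ult(V,U)$ and restrict to $K$; by standard arguments $j(K)=K^M$ and $j\restriction K$ is elementary, so all the core-model bookkeeping becomes available on the target side. Next, I would assume for contradiction that $U$ is uniform on $\nu$ but fails $(\om,\mu)$-regularity for some $\mu<\nu$, and apply a Ketonen-style dichotomy: failure of $(\om,\mu)$-regularity says that every family $\{X_\alpha \mid \alpha<\mu\} \subseteq U$ has an infinite subfamily with non-empty intersection, and this translates, through the $j$-image of $\mu$ in $M$, into a derived ultrafilter on a cardinal at most $\mu$ that carries a nontrivial amount of countable completeness. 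I would then combine this derived ultrafilter with the covering lemma inside $K$: covering lets me project the derived ultrafilter onto a set of ordinals that lies in, or is amenable to, $K$, obtaining a non-principal $\sigma$-complete ultrafilter of $K$-like complexity. By standard inner model arguments this produces a measurable cardinal in an inner model, contradicting the blanket hypothesis and hence establishing both claims of the fact ($0^{\#}$ exists as an immediate consequence, since a measurable in an inner model implies $0^{\#}$).

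The main obstacle is precisely the step that converts the failure of $(\om,\mu)$-regularity into a $\sigma$-complete ultrafilter amenable to the core model: that is the technical heart of Donder's theorem and genuinely requires the full Dodd--Jensen covering machinery rather than the routine ultrapower bookkeeping of the other steps. For this reason the sketch above should be regarded as a plan for how the ingredients combine; for the delicate core-model argument I would defer to Donder \cite{D}.
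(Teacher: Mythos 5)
The paper states this result as a \emph{Fact} quoted from Donder \cite{D} and supplies no proof of its own, so there is no internal argument to compare yours against. Your outline --- proving the contrapositive via the Dodd--Jensen core model and covering lemma, converting the failure of $(\om,\mu)$-regularity into a derived ultrafilter with enough completeness to yield a measurable in an inner model, and explicitly deferring the technical heart to \cite{D} --- is a faithful description of how Donder's theorem is actually proved and is in effect the same move the paper makes, namely citing Donder as a black box.
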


\begin{lemma}\label{4.8}
Suppose $0^{\#}$ does not exist.
In $\mathrm{HS}^G$, let $\la > \ka$ be an infinite cardinal
and $U$ an ultrafilter over $\la$.
Then $U$ has a  measure one set with size $<\ka$.
\end{lemma}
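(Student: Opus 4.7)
The plan is to reduce to Lemma \ref{4.5}, which handles ultrafilters on $\ka$, by using Donder's result (Fact \ref{4.7}) to push $U$ down to $\ka$. Fix a hereditarily symmetric name $\dot U$ for $U$ and choose $\alpha_0<\ka$ with $\alpha_0 > \sup(e)$ for some $e \in [\ka]^{<\ka}$ with $\mathrm{fix}(e) \subseteq \mathrm{sym}(\dot U)$. By Lemma \ref{4.2}, whether a given $\mathsf{Fn}(\alpha_0 \times \ka, 2, \mathop{<}\ka)$-name lies in $\dot U$ is decided by a condition in $G_{\alpha_0}$, so $U^\circ := U \cap V[G_{\alpha_0}]$ is definable inside $V[G_{\alpha_0}]$ and is an ultrafilter there on $\p(\la) \cap V[G_{\alpha_0}]$.

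Suppose for contradiction that $U$ has no measure one set of size $<\ka$ in $\mathrm{HS}^G$. Since $\ka$ is preserved as a cardinal and cardinalities in $V[G_{\alpha_0}]$ are at least those in $\mathrm{HS}^G$, every measure one set of $U^\circ$ has size $\ge \ka$ in $V[G_{\alpha_0}]$. Let $\nu\ge\ka$ be the minimum such size, fix $X_0 \in U^\circ$ of size $\nu$ with bijection $f : \nu \to X_0$ in $V[G_{\alpha_0}]$, and put $U^* := \{Y \subseteq \nu : f[Y] \in U^\circ\}$, a uniform ultrafilter on $\nu$ in $V[G_{\alpha_0}]$. Because the non-existence of $0^\#$ is absolute between $V$, $V[G_{\alpha_0}]$ and $\mathrm{HS}^G$, Fact \ref{4.7} applied inside $V[G_{\alpha_0}]$ gives that $U^*$ is $(\om,\mu)$-regular for every cardinal $\mu<\nu$.

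If $\nu = \ka$, let $\bar U := \{Y \subseteq \ka : f[Y] \in U\}$ in $\mathrm{HS}^G$; this is an ultrafilter on $\ka$, so Lemma \ref{4.5} supplies $Y \in \bar U$ with $|Y|<\ka$, whence $f[Y] \in U$ has size $<\ka$, contradicting the assumption. If $\nu>\ka$, take $\mu = \ka$ to obtain an $(\om,\ka)$-regular family $\{X_\alpha : \alpha<\ka\} \subseteq U^*$ in $V[G_{\alpha_0}]$ and define $h : \nu \to [\ka]^{<\om}$ by $h(\xi) := \{\alpha<\ka : \xi \in X_\alpha\}$, finitely-valued by regularity. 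Set $\bar U := f^{-1}_*(U)$ and $W := h_*(\bar U)$ in $\mathrm{HS}^G$; identifying $[\ka]^{<\om}$ with $\ka$ via a bijection from $V$, $W$ is an ultrafilter on $\ka$ in $\mathrm{HS}^G$, and Lemma \ref{4.5} yields $Z \in W$ with $|Z|<\ka$. Put $T := \bigcup Z$; then $|T|<\ka$ by regularity of $\ka$, so $\ka \setminus T \neq \emptyset$. Any $\alpha \in \ka \setminus T$ satisfies $\xi \notin X_\alpha$ for all $\xi \in h^{-1}(Z)$ (since $h(\xi) \subseteq T$), so $h^{-1}(Z) \cap X_\alpha = \emptyset$ even though both sets lie in $\bar U$, giving the desired contradiction.

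The main obstacle is engineering the push-down in the case $\nu>\ka$ so that a small measure one set of $W$ contradicts the filter property of $\bar U$: the $(\om,\ka)$-regular family is exactly what forces $h$-preimages of sets with bounded union in $\ka$ to avoid cofinally many filter elements $X_\alpha$. Equally important is the bookkeeping between $V[G_{\alpha_0}]$ (where $f$, $h$, and the regular family live) and $\mathrm{HS}^G$ (where $U$, $\bar U$, $W$, and $Z$ live), together with the absoluteness of $\neg 0^\#$ needed to apply Fact \ref{4.7} inside the ZFC model $V[G_{\alpha_0}]$.
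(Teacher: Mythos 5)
Your argument is correct, and it uses the same essential ingredients as the paper's proof --- trapping the trace of $U$ inside $V[G_{\alpha_0}]$ via Lemma \ref{4.2}, Donder's theorem (Fact \ref{4.7}), and Lemma \ref{4.5} for ultrafilters over $\ka$ --- but it is organized genuinely differently. The paper argues by induction on $\la$, splits on whether there is $f:\la\to\ka$ with $f^{-1}(Y)\notin U$ for all $Y\in[\ka]^{<\ka}$ (Case 1 feeds $f_*(U)$ to Lemma \ref{4.5}; Case 2 shows the trace $U\cap V[G_\alpha]$ is not $(\om,\ka)$-regular, with a separate $\sigma$-completeness subcase when $\ka=\om$), and uses Fact \ref{4.7} contrapositively to produce $0^{\#}$. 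You eliminate the induction by minimizing the cardinality $\nu$ of a measure one set of the trace, dispose of $\nu=\ka$ directly with Lemma \ref{4.5}, and for $\nu>\ka$ use Fact \ref{4.7} in the positive direction to obtain an $(\om,\ka)$-regularizing family, which you then defeat by pushing $\bar U$ forward along $\xi\mapsto h(\xi)=\{\alpha<\ka \mid \xi\in X_\alpha\}$ to an ultrafilter on $[\ka]^{<\om}\cong\ka$ and applying Lemma \ref{4.5} there: a measure one $Z$ with $\size{\bigcup Z}<\ka$ forces $h^{-1}(Z)$ to miss some $X_\alpha$, contradicting properness of $\bar U$. This buys a uniform treatment of $\ka=\om$ and removes the induction, at the cost of the extra bookkeeping around $\nu$, $f$, $U^*$, $\bar U$; the underlying combinatorics is the paper's non-regularity argument in disguise ($h$ refines the paper's $g(\beta)=\sup\{\alpha\mid\beta\in X_\alpha\}$, and Lemma \ref{4.5} applied to the pushforward plays the role of the paper's Case 2 hypothesis). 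Two small points to tidy: state explicitly that Fact \ref{4.7} is invoked only when $\nu>\ka\ge\om$, so that $\nu\ge\om_1$ as the Fact requires; and note that a measure one set of $U^\circ$ of size $<\ka$ in $V[G_{\alpha_0}]$ is already a measure one set of $U$ of size $<\ka$ in $\mathrm{HS}^G$, which is what makes your opening reduction legitimate.
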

\begin{proof}
We work in $\mathrm{HS}^G$.
We prove it by induction on $\la \ge \ka$.
The base step $\la=\ka$ is Lemma \ref{4.5}.

Let $\la>\ka$ and $U$ an ultrafilter over $\la$.
We shall prove that 
there is $X \in U$ with $\size{X}<\la$.
Then, by applying the induction hypothesis,
we can take a  measure one set of $U$ with size $<\ka$.

Suppose to the contrary that $\size{X}=\la$ for every $X \in U$,
that is, $U$ is a uniform ultrafilter.

Case 1: There is $f:\la \to \ka$
such that $f^{-1}(Y) \notin U$ for every set $Y \in [\ka]^{<\ka}$.
In this case, consider the ultrafilter $f_*(U)=\{Y \subseteq \ka \mid f^{-1}(Y) \in U\}$.
By the assumption, $f_*(U)$ forms a uniform ultrafilter over $\ka$, 
this contradicts to Lemma \ref{4.5}.

Case 2: For every $f:\la \to \ka$,
there is a set $Y \in [\ka]^{<\ka}$ such that
$f^{-1}(Y) \in U$. 
Take a hereditarily symmetric name $\dot U$ for $U$,
and an ordinal $\alpha<\ka$ with $\mathrm{fix}(\alpha) \subseteq \mathrm{sym}(\dot U)$.
For every $X \in \p(\la)^{V[G_\alpha]}$,
we can find a 
$\mathrm{Fn}(\alpha \times \ka, 2, \mathop{<}\ka)$-name $\dot X$ for $X$.
$\dot X$ is hereditarily symmetric with $\mathrm{fix}(\alpha) \subseteq \mathrm{sym}( \dot X)$.
By Lemma  \ref{4.2}, we have that for every $X \in \p(\la)^{V[G_\alpha]}$,
\[
X \in U \iff p \Vdash \dot X \in \dot U \text{ for some $p \in G_\alpha$}.
\]
Thus we have $U \cap V[G_\alpha] \in V[G_\alpha]$.
Let $U'=U \cap V[G_\alpha]$,
which is a uniform ultrafilter over $\la$ in $V[G_\alpha]$.

Subcase 1: $\ka=\om$.
In this case, it is easy to check that $U'$ is  $\sigma$-complete in $V[G_\alpha]$,
hence $U'$ is a $\sigma$-complete uniform ultrafilter over $\la$.
This means, in $V[G_\alpha]$, there is a measurable cardinal $\le \la$,
hence $0^{\#}$ exists in $V[G_\alpha]$.
This contradicts to the assumption.

Subcase 2: $\ka>\om$.
We prove that $U'$ is not $(\om, \ka)$-regular in $V[G_\alpha]$.
Since $V[G_\alpha]$ is a model of $\ZFC$, we conclude that $0^{\#}$ exists by Fact \ref{4.8},
this is also impossible.

We work in $V[G_\alpha]$. Suppose to the contrary that $U'$ is $(\om, \ka)$-regular, and 
take $\{X_\alpha\mid \alpha<\ka \} \subseteq U'$
such that $\bigcap_{\alpha \in A} X_\alpha=\emptyset$ for every $A \in [\ka]^\om$.
Define $g:\la \to \ka$ as follows. For $\beta<\la$,
by the choice of the $A_\alpha$'s, the set $\{\alpha<\ka \mid \beta \in A_\alpha\}$ is bounded in $\ka$.
Let $g(\beta)=\sup\{\alpha <\ka \mid \beta \in A_\alpha\}<\ka$.
By the assumption, in $\mathrm{HS}^G$, there is a set $Y \in [\ka]^{<\ka}$
such that $g^{-1}(Y) \in U$.
We know $Y \in V[G_\alpha]$ since $Y$ has cardinality $<\ka$,
so $g^{-1}(Y) \in V[G_\alpha]$ and $g^{-1}(Y) \in U'$. 
Pick $\gamma <\ka$ with $\gamma>\sup(Y)$.
By the definition of $g$, we know that $\beta \notin A_\gamma$ for every $\beta \in g^{-1}(Y)$,
hence $A_\gamma \cap g^{-1}(Y)=\emptyset$.
This is a contradiction.
\end{proof}


To check that $\AC_{\ka,2}$ fails in $\mathrm{HS}^G$,
we need more.
For $\alpha<\ka$,
the name $\dot x_\alpha=\{\seq{\check \beta, p} \mid p \in \bbP, p(\alpha,\beta)=1\}$ 
is a canonical hereditarily symmetric name
for $\dot x_\alpha$ with $\fix(\{\alpha\}) \subseteq \sym(\dot x_\alpha)$.
Then for $\pi_{\vec{X}} \in \mathcal G$,
the name $\pi_{\vec{X}}(\dot x_\alpha)$ is a hereditarily symmetric name
for the symmetric difference $x_\alpha \mathbin \triangle X_\alpha$ with
$\fix(\{\alpha\}) \subseteq \sym(\pi_{\vec{X}}(\dot x_\alpha))$.

Now fix a uniform ultrafilter $W$ over $\ka$ in $V$.
Let $d_\alpha$ be the set $\{x_\alpha \mathbin \triangle Y \mid Y \in \p(\ka)^V, Y \notin W\}$,
and $\dot d_\alpha$ the name $\{\seq{\pi_{\vec{X}}(\dot x_\alpha), \mathbbm 1} \mid \pi_{\vec{X}} \in \calG,
X_\alpha \notin W\}$.
$\dot d_\alpha$ is a hereditarily symmetric name for $d_\alpha$.
Similarly, let $e_\alpha$ be the set 
$\{x_\alpha \mathbin \triangle Y \mid Y \in \p(\ka)^V, Y \in W\}$,
and $\dot e_\alpha$ the name $\{\seq{\pi_{\vec{X}}(\dot x_\alpha), \mathbbm 1} \mid \pi_{\vec{X}} \in \calG,
X_\alpha \in W\}$.
$\dot e_\alpha$ is a hereditarily symmetric name
for $e_\alpha$.
We note that $d_\alpha, e_\alpha \in V[G_{\alpha+1}]$ and $d_\alpha \cap e_\alpha=\emptyset$.

The following is straightforward:
\begin{lemma}\label{5.10}
\begin{enumerate}
\item $\pi_{\vec{X}}(\dot d_{\alpha})=
\begin{cases}
\dot d_{\alpha} & \text{if $X_\alpha \notin W$,}\\
\dot e_{\alpha} & \text{if $X_\alpha \in W$}.
\end{cases}$
\item $\pi_{\vec{X}}(\dot e_{\alpha})=
\begin{cases}
\dot e_{\alpha} & \text{if $X_\alpha \notin W$,}\\
\dot d_{\alpha} & \text{if $X_\alpha \in W$}.
\end{cases}$
\end{enumerate}
\end{lemma}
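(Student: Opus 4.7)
The plan is to compute $\pi_{\vec{X}}(\dot{d}_\alpha)$ and $\pi_{\vec{X}}(\dot{e}_\alpha)$ directly from the definitions, and exploit the fact that $W$ is an ultrafilter (so complements of non-measure-one sets are measure one). The key algebraic fact I will use is that the group operation on $\mathcal{G}$ is componentwise symmetric difference: for any two sequences $\vec{X}, \vec{Y}$ of subsets of $\ka$, the composition satisfies $\pi_{\vec{X}} \circ \pi_{\vec{Y}} = \pi_{\vec{X}\triangle\vec{Y}}$, where $(\vec{X}\triangle\vec{Y})_\beta := X_\beta \triangle Y_\beta$. This is exactly the observation recorded just before the definition of $\mathcal{G}$ in the paper.

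Applying $\pi_{\vec{X}}$ to $\dot{d}_\alpha$ and using that $\pi_{\vec{X}}(\mathbbm{1}) = \mathbbm{1}$, one gets
\[
\pi_{\vec{X}}(\dot{d}_\alpha) = \bigl\{ \langle \pi_{\vec{X}\triangle\vec{Y}}(\dot{x}_\alpha), \mathbbm{1} \rangle \,\big|\, \vec{Y} \text{ with } Y_\alpha \notin W \bigr\}.
\]
Reparametrizing by $\vec{Z} := \vec{X}\triangle\vec{Y}$ (so that $Y_\beta = X_\beta \triangle Z_\beta$ for each $\beta$), the map $\vec{Y}\mapsto\vec{Z}$ is a bijection on sequences of subsets of $\ka$; hence the set above equals $\{\langle \pi_{\vec{Z}}(\dot{x}_\alpha),\mathbbm{1}\rangle : Z_\alpha \in S_X\}$, where $S_X := \{Z_\alpha \subseteq \ka : X_\alpha \triangle Z_\alpha \notin W\}$.

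The remaining task is to identify $S_X$. Suppose $X_\alpha \notin W$, so $\ka\setminus X_\alpha \in W$. If also $Z_\alpha \notin W$, then $(\ka\setminus X_\alpha)\cap(\ka\setminus Z_\alpha) \in W$, and this intersection is $\ka \setminus (X_\alpha \cup Z_\alpha)$, so $X_\alpha \cup Z_\alpha \notin W$, and the subset $X_\alpha \triangle Z_\alpha$ is not in $W$ either. Conversely, if $Z_\alpha \in W$, then $(\ka\setminus X_\alpha)\cap Z_\alpha \in W$ and this set is $Z_\alpha \setminus X_\alpha \subseteq X_\alpha \triangle Z_\alpha$, so $X_\alpha \triangle Z_\alpha \in W$. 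Hence for $X_\alpha \notin W$, $S_X = \{Z_\alpha : Z_\alpha \notin W\}$ and $\pi_{\vec{X}}(\dot{d}_\alpha) = \dot{d}_\alpha$. A symmetric case analysis when $X_\alpha \in W$ yields $S_X = \{Z_\alpha : Z_\alpha \in W\}$, so $\pi_{\vec{X}}(\dot{d}_\alpha) = \dot{e}_\alpha$. This proves clause (1); clause (2) is obtained by swapping ``$\in W$'' and ``$\notin W$'' throughout the argument.

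There is no genuine obstacle here beyond being careful with the bookkeeping for the reparametrization and invoking the ultrafilter property of $W$ at the right moment. The only point that deserves a line of its own in the write-up is the identity $\pi_{\vec{X}}\circ\pi_{\vec{Y}} = \pi_{\vec{X}\triangle\vec{Y}}$, which makes the reparametrization a genuine bijection and lets the ultrafilter dichotomy in $W$ translate cleanly into the dichotomy $\dot{d}_\alpha$ vs.\ $\dot{e}_\alpha$.
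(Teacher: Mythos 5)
Your proof is correct, and it is precisely the verification the paper omits (the lemma is introduced there with ``The following is straightforward'' and no proof is given): apply $\pi_{\vec{X}}$ entrywise, use $\pi_{\vec{X}}\circ\pi_{\vec{Y}}=\pi_{\vec{X}\triangle\vec{Y}}$ to reparametrize, and observe that for an ultrafilter $W$ the condition $X_\alpha\triangle Z_\alpha\notin W$ is equivalent to $Z_\alpha\notin W$ when $X_\alpha\notin W$ and to $Z_\alpha\in W$ when $X_\alpha\in W$. No gaps.
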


Define the function $F_0$ on $\ka$ by
$F_0(\alpha)=\{d_\alpha, e_\alpha\}$ for $\alpha<\ka$.
\begin{lemma}
$F_0 \in \mathrm{HS}^G$.
\end{lemma}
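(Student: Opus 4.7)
The plan is to exhibit an explicit hereditarily symmetric name for $F_0$ and check it is invariant under every automorphism in $\calG$. The natural candidate is
\[
\dot F_0 = \bigl\{\, \bigl\langle \langle \check\alpha, \{\dot d_\alpha, \dot e_\alpha\}^\bullet\rangle^\bullet,\, \mathbbm 1 \bigr\rangle \;\bigm|\; \alpha<\kappa\, \bigr\}.
\]
First I would verify that $\dot F_0$ really is a name for $F_0$: this is immediate from the facts (already noted) that $\dot d_\alpha, \dot e_\alpha$ are names for $d_\alpha, e_\alpha$, and from the way $\{\cdot,\cdot\}^\bullet$ and $\langle\cdot,\cdot\rangle^\bullet$ code unordered and ordered pairs.

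Next I would argue hereditary symmetry. Each $\check\alpha$ is fixed by every element of $\calG$ and is hereditarily symmetric; the names $\dot d_\alpha$ and $\dot e_\alpha$ are hereditarily symmetric by their definitions; and the $\{\cdot\}^\bullet$ and $\langle\cdot,\cdot\rangle^\bullet$ operations preserve hereditary symmetry, as recalled in the excerpt. So all that remains is to check that $\dot F_0$ itself is symmetric, i.e.\ that $\mathrm{sym}(\dot F_0) = \calG$ (which clearly lies in $\calF$ since $\mathrm{fix}(\emptyset) = \calG$).

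The key point is the following cancellation, which I would extract from Lemma \ref{5.10}. Fix $\pi_{\vec X} \in \calG$ and $\alpha<\kappa$. If $X_\alpha\notin W$ then Lemma \ref{5.10} yields $\pi_{\vec X}(\dot d_\alpha)=\dot d_\alpha$ and $\pi_{\vec X}(\dot e_\alpha)=\dot e_\alpha$, so
\[
\pi_{\vec X}\bigl(\{\dot d_\alpha,\dot e_\alpha\}^\bullet\bigr)=\{\dot d_\alpha,\dot e_\alpha\}^\bullet.
\]
If $X_\alpha\in W$ then $\pi_{\vec X}(\dot d_\alpha)=\dot e_\alpha$ and $\pi_{\vec X}(\dot e_\alpha)=\dot d_\alpha$, and since $\{\cdot,\cdot\}^\bullet$ is unordered the same equality holds. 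In either case $\pi_{\vec X}$ fixes the pair name, and hence (using that $\check\alpha$ and $\mathbbm 1$ are always fixed) fixes each element of $\dot F_0$; therefore $\pi_{\vec X}(\dot F_0)=\dot F_0$. This gives $\mathrm{sym}(\dot F_0)=\calG\in\calF$, completing the proof.

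There is no real obstacle here; the only subtle point is the unordered-pair cancellation, which is precisely why the $d_\alpha,e_\alpha$ were introduced in this symmetric fashion. The argument really is a one-line application of Lemma \ref{5.10} once the candidate name is written down.
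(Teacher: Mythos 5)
Your proposal is correct and follows the paper's proof exactly: both exhibit the name $\dot F_0=\{\seq{\seq{\check \alpha, \{\dot d_\alpha,\dot e_\alpha\}^\bullet }^\bullet, \mathbbm 1} \mid \alpha<\ka\}$ and use Lemma \ref{5.10} to see that each unordered-pair name $\{\dot d_\alpha,\dot e_\alpha\}^\bullet$ is fixed by all of $\calG$ (either both components are fixed or they are swapped), so $\sym(\dot F_0)=\calG$. Your write-up merely makes explicit the case analysis that the paper leaves implicit.
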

\begin{proof}
By Lemma \ref{5.10}, the name $\{\dot d_\alpha,\dot e_\alpha\}^\bullet$
is hereditarily symmetric with $\sym(\{\dot d_\alpha,\dot e_\alpha\}^\bullet)=\calG$.
Hence the name $\dot F_0=\{\seq{\seq{\check \alpha, \{\dot d_\alpha,\dot e_\alpha\}^\bullet }^\bullet, \mathbbm 1} \mid \alpha<\ka\}$, which is a name for $F_0$,
is hereditarily symmetric.
So we have $F_0 \in \mathrm{HS}^G$.
\end{proof}

\begin{lemma}
$\mathsf{AC}_{\ka, 2}$ fails in $\mathrm{HS}^G$.
\end{lemma}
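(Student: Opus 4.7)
The plan is to show that $F_0$ itself witnesses the failure of $\AC_{\ka,2}$ in $\mathrm{HS}^G$. Suppose toward contradiction that there is a choice function $h \in \mathrm{HS}^G$ with $\dom(h) = \ka$ and $h(\alpha) \in \{d_\alpha, e_\alpha\}$ for every $\alpha<\ka$. Fix a hereditarily symmetric name $\dot h$ for $h$ and a set $e \in [\ka]^{<\ka}$ with $\mathrm{fix}(e) \subseteq \mathrm{sym}(\dot h)$. Pick any $\alpha \in \ka \setminus e$, and pick $p \in G$ deciding $\dot h(\check \alpha)$; without loss of generality $p \Vdash \dot h(\check \alpha) = \dot d_\alpha$ (the other case is symmetric, using Lemma \ref{5.10}).

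The strategy is now to build an automorphism $\pi_{\vec X} \in \calG$ with three properties: (i) $\pi_{\vec X} \in \mathrm{fix}(e)$, so that $\pi_{\vec X}(\dot h) = \dot h$ and $\pi_{\vec X}(\check \alpha) = \check \alpha$; (ii) $\pi_{\vec X}(p) = p$, so that $p$ itself forces the transformed statement; and (iii) $\pi_{\vec X}(\dot d_\alpha) = \dot e_\alpha$, which by Lemma \ref{5.10} requires $X_\alpha \in W$. Once such a $\pi_{\vec X}$ is built, applying it to the forcing statement yields $p \Vdash \dot h(\check \alpha) = \dot e_\alpha$, contradicting the assumption that $p \Vdash \dot h(\check\alpha) = \dot d_\alpha$, since $d_\alpha$ and $e_\alpha$ are disjoint nonempty sets (for instance $x_\alpha \in d_\alpha$ while $\ka \setminus x_\alpha \in e_\alpha$), so in particular $\mathbbm 1 \Vdash \dot d_\alpha \ne \dot e_\alpha$.

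The construction of $\vec X$ is the crux. Set $X_\beta = \emptyset$ for $\beta \ne \alpha$, which secures (i) since $\alpha \notin e$. For (iii), we need $X_\alpha \in W$. For (ii), we need $\pi_{\vec X}(p)(\alpha,\beta) = p(\alpha,\beta)$ for every $\beta$ with $\langle \alpha,\beta\rangle \in \dom(p)$, which by the definition of $\pi_{\vec X}$ means $X_\alpha$ must be disjoint from the set $D = \{\beta : \langle \alpha,\beta\rangle \in \dom(p)\}$. Since $p \in \bbP = \mathsf{Fn}(\ka\times\ka, 2, \mathop{<}\ka)$, the set $D$ has cardinality ${<}\ka$, so $D \notin W$ (as $W$ is uniform on $\ka$), and therefore $\ka \setminus D \in W$. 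Taking $X_\alpha = \ka \setminus D$ satisfies both (ii) and (iii) simultaneously, completing the construction and hence the contradiction.

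The only potentially subtle point is making sure the symmetry argument cleanly transports the forcing statement: from $p \Vdash \dot h(\check\alpha) = \dot d_\alpha$ we apply $\pi_{\vec X}$ to obtain $\pi_{\vec X}(p) \Vdash \pi_{\vec X}(\dot h)(\pi_{\vec X}(\check\alpha)) = \pi_{\vec X}(\dot d_\alpha)$, and then use (i), (ii), (iii) to rewrite this as $p \Vdash \dot h(\check\alpha) = \dot e_\alpha$. Everything else is bookkeeping; the real content is the use of uniformity of $W$ to choose $X_\alpha$ both large enough to lie in $W$ and disjoint from the ${<}\ka$-sized trace of $p$ on the $\alpha$-th coordinate.
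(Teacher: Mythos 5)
Your proof is correct and follows essentially the same route as the paper: fix a support $e$ for the name of the putative choice function, pick $\alpha \notin e$ and $p$ deciding $\dot h(\check\alpha)$, and apply an automorphism $\pi_{\vec X}$ with $X_\alpha \in W$ avoiding the $\alpha$-th trace of $p$ so that $\dot d_\alpha$ and $\dot e_\alpha$ are swapped while $p$ and $\dot h$ are fixed. The only cosmetic difference is that the paper takes $X_\alpha = \ka\setminus\beta$ for a bound $\beta$ on that trace rather than the exact complement $\ka\setminus D$; both lie in $W$ by uniformity.
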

\begin{proof}
Let $E_\alpha=\{d_\alpha ,e_\alpha\}$ for $\alpha<\ka$,
and consider the indexed family $\{E_\alpha \mid \alpha<\ka\}$,
which is in $\mathrm{HS}^G$ because $F_0 \in \mathrm{HS}^G$.
We show  that this family has no choice function in $\mathrm{HS}^G$.

Suppose to the contrary that the family has a choice function.
Take a choice function $f \in \mathrm{HS}^G$,
and let $\dot f$ be a hereditarily symmetric name for $f$.
Take $p \in G$ and $\alpha<\ka$ such that $\fix(\alpha) \subseteq \sym(\dot f)$ and
$p$ decides the value of  $\dot f(\check \alpha)$,
say $p \Vdash \dot f (\check \alpha)=\dot d_\alpha$.
Fix a large $\beta<\ka$
with $\dom(p) \cap (\{\alpha\} \times \ka) \subseteq \{\alpha\} \times \beta$.
In $V$, take a sequence $\vec{X}$ such that
$X_\gamma=\emptyset$ if $\gamma \neq \alpha$, and
$X_\alpha=\ka \setminus \beta$. 
We have $\pi_{\vec{X}}(p)=p$,
and since $X_\alpha \in W$, we have $p \Vdash
\dot f(\check \alpha)=\pi_{\vec{X}}(\dot f)(\check \alpha)=
\pi_{\vec{X}}(\dot d_\alpha)=\dot e_\alpha$
by Lemma \ref{5.10}.
Hence $p \Vdash \dot d_\alpha=\dot e_\alpha$,
this is a contradiction.
\end{proof}

As mentioned before, our symmetric extension $\mathrm{HS}^G$ may not be a model of Theorem \ref{1.8} and \ref{1.9},
because we do not know if $\LT$ holds in $\mathrm{HS}^G$.
Instead, we take an intermediate model $M$ between $V$ and $\mathrm{HS}^G$
such that $M$ satisfies not only (1)--(5) but also the following (6) and (7):
\begin{enumerate}
\item[(6)] Every ultrafilter has a well-orderable measure one set.
\item[(7)] If $0^{\#}$ does not exist,
then every ultrafilter has a well-orderable measure one set with size $<\ka$.
\end{enumerate}

For $\alpha<\ka$, if $\dot x$ is a $\bbP$-name which is of the form
$\{ \seq{ \check \beta, p} \mid \beta<\ka, p \in A_\beta \}$ 
for some $A_\beta \subseteq \mathrm{Fn}(\alpha\times \ka, 2, <\ka)$,
then $\dot x$ is a hereditarily symmetric name with $\mathrm{fix}(\alpha) \subseteq \mathrm{sym}(\dot x)$.
Let $Q_\alpha$ be the set of such names,
and $\dot P_\alpha=\{\seq{\dot x, \mathbbm 1} \mid \dot x \in Q_\alpha\}$.
$\dot P_\alpha$ is a hereditarily symmetric name for $\p(\ka)^{V[G_\alpha]}$ with
$\mathrm{sym}(\dot P_\alpha)=\calG$.
Using this, we have:
\begin{lemma}
The function $F_1$ on $\ka$ defined by
$F_1(\alpha)=\p(\ka)^{V[G_\alpha]}$ is in $\mathrm{HS}^G$.
\end{lemma}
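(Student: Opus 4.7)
The plan is to package the $\bbP$-names $\dot P_\alpha$ constructed in the paragraph immediately preceding the statement into a single hereditarily symmetric name for $F_1$. The content of the lemma is essentially bookkeeping on top of that paragraph, where the heavy lifting (producing a hereditarily symmetric name for $\mathcal{P}(\ka)^{V[G_\alpha]}$ with full symmetry group) has already been done.

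First I would set
\[
\dot F_1 = \{\seq{\seq{\check\alpha, \dot P_\alpha}^\bullet, \mathbbm 1} \mid \alpha<\ka\},
\]
which is a canonical $\bbP$-name for the function $\alpha \mapsto \mathcal{P}(\ka)^{V[G_\alpha]}$. To conclude $\dot F_1 \in \mathrm{HS}$, I would verify hereditary symmetry of each ingredient. The check names $\check\alpha$ are fixed by every automorphism of $\bbP$ (a standard consequence of the recursion $\check\alpha = \{\seq{\check\beta, \mathbbm 1} \mid \beta<\alpha\}$), so $\sym(\check\alpha) = \calG \in \calF$, and an induction on rank shows each $\check\alpha$ is hereditarily symmetric. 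Combined with the observation recorded earlier in the paper that the $\bullet$-pair construction preserves hereditary symmetry, and with the identity $\sym(\dot P_\alpha) = \calG$ from the preceding paragraph, each ordered-pair name $\seq{\check\alpha, \dot P_\alpha}^\bullet$ is hereditarily symmetric and is fixed by every $\pi \in \calG$. Hence $\pi(\dot F_1) = \dot F_1$ for every $\pi \in \calG$, so $\sym(\dot F_1) = \calG \in \calF$ and $\dot F_1$ is hereditarily symmetric.

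Finally, interpreting under $G$ gives
\[
(\dot F_1)^G = \{\seq{\alpha, (\dot P_\alpha)^G} \mid \alpha<\ka\} = \{\seq{\alpha, \mathcal{P}(\ka)^{V[G_\alpha]}} \mid \alpha<\ka\} = F_1,
\]
so $F_1 \in \mathrm{HS}^G$. There is no real obstacle: the only nontrivial input, namely that $\dot P_\alpha$ is a hereditarily symmetric name for $\mathcal{P}(\ka)^{V[G_\alpha]}$ with the full group $\calG$ as symmetry group, was handled just before the statement. The lemma itself reduces to the observation that indexing these names by the invariant check names $\check\alpha$ produces a name whose symmetry group is again all of $\calG$.
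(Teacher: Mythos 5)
Your proof is correct and follows exactly the route the paper intends: the paper states this lemma without proof, but its proof of the analogous lemma for $F_0$ uses precisely the name $\{\seq{\seq{\check\alpha, \cdot}^\bullet, \mathbbm 1} \mid \alpha<\ka\}$ together with the fact that the inner names have full symmetry group, which is what you do with $\dot P_\alpha$. No gaps.
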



Our final model $M$ is $V(\{F_0, F_1\})$,
which is the minimal inner model of $\ZF$ containing 
$F_0, F_1$ and all elements of $V$.
$M$ is an intermediate model between $V$ and $\mathrm{HS}^G$.
We note that $V[G_\alpha] \subseteq M$ for every $\alpha<\ka$
since $G_\alpha \in M$.
We show that $M$ satisfies the properties (1)--(5), and (6), (7).

For every ordinal $\gamma$, we have
$\p(\gamma)^M=\p(\gamma)^{\mathrm{HS}^G}$ by Lemma \ref{4.55}.
By Lemmas \ref{4.33}, \ref{4.8}, and this observation, we have:
\begin{lemma}\label{4.11}
The following hold in $M$:
\begin{enumerate}
\item For every infinite cardinal $\la<\ka$, $\la$ carries a uniform ultrafilter. 
\item Every ultrafilter over $\ka$ has a measure one set with size $<\ka$.
\item If $0^{\#}$ does not exist, then for every cardinal $\la > \ka$ and ultrafilter $U$ over $\la$, $U$ has 
a measure one set with size $<\ka$.
\end{enumerate}
\end{lemma}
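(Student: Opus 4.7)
The plan is to reduce each clause of the lemma to its analogue in $\mathrm{HS}^G$ (supplied by Lemmas~\ref{4.33}, \ref{4.5}, and \ref{4.8} respectively) via the bridging observation $\p(\gamma)^M=\p(\gamma)^{\mathrm{HS}^G}$ for every ordinal $\gamma$, which the paper invokes just before the lemma.

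First I would justify the bridging identity. The inclusion $\p(\gamma)^M\subseteq\p(\gamma)^{\mathrm{HS}^G}$ is immediate since $V\subseteq \mathrm{HS}^G$ and $F_0,F_1\in \mathrm{HS}^G$, so $M\subseteq \mathrm{HS}^G$. For the reverse direction, given $x\in\p(\gamma)^{\mathrm{HS}^G}$, Lemma~\ref{4.55}(2) supplies $\alpha<\ka$ with $x\in V[G_\alpha]$. The generic characteristic function $\bigcup G_\alpha\colon \alpha\times\ka\to 2$ recovers $G_\alpha$ by a $V$-definable decoding (the conditions in $G_\alpha$ are exactly the elements of $\mathrm{Fn}(\alpha\times\ka,2,\mathop{<}\ka)$ compatible with this function), and under any $V$-bijection $\alpha\times\ka\cong\ka$ (available since $\alpha<\ka$ and $V\models\AC$) this characteristic function becomes an element of $\p(\ka)^{V[G_\alpha]}=F_1(\alpha)\in M$. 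Hence $G_\alpha\in M$, so $V[G_\alpha]\subseteq M$, and in particular $x\in M$.

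Granting the bridge, (1) is quick. Since $\bbP$ is $\ka$-closed and $\la<\ka$, no new subsets of $\la$ are added by $\bbP$, so $\p(\la)^M=\p(\la)^V$. Any uniform ultrafilter over $\la$ chosen in $V$ by $\AC$ remains a uniform ultrafilter in $M$, since both properties are decided by $\p(\la)$ together with the cardinality of $\la$, all of which are preserved.

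For (2) and (3), let $U\in M$ be an ultrafilter over $\la$, where $\la=\ka$ in (2) and $\la>\ka$ in (3). The bridging identity makes $U$ an ultrafilter over $\la$ in $\mathrm{HS}^G$ as well, so Lemma~\ref{4.5} (for (2)) or Lemma~\ref{4.8} (for (3), under the hypothesis that $0^{\#}$ does not exist) yields $X\in U$ with $|X|^{\mathrm{HS}^G}<\ka$. The witnessing bijection is a subset of $\ka\times\la$, which is in $V$-bijection with an ordinal, so by the bridge it lies in $M$ too, giving $|X|^M<\ka$ as required. The principal technical point is really the bridging identity --- specifically, the recoverability of $G_\alpha$ inside $M$ purely from $F_1\restriction(\alpha+1)$ together with $V$-coding data --- but because every coding map used is an element of $V\subseteq M$, this is bookkeeping rather than a new construction.
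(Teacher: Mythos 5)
Your proposal is correct and follows essentially the same route as the paper: establish $\p(\gamma)^M=\p(\gamma)^{\mathrm{HS}^G}$ via Lemma~\ref{4.55} together with $V[G_\alpha]\subseteq M\subseteq\mathrm{HS}^G$, then transfer Lemmas~\ref{4.33}, \ref{4.5}, and \ref{4.8} down to $M$. The paper leaves exactly these transfer details implicit, and your filling-in (recovering $G_\alpha$ inside $M$ from $F_1(\alpha)$, and pulling the witnessing bijections for $\size{X}<\ka$ into $M$ via the bridge) is accurate.
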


Since $F_0 \in M \subseteq \mathrm{HS}^G$, we have:
\begin{lemma}\label{5.16}
$\AC_{\ka,2}$ fails in $M$.
\end{lemma}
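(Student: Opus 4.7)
The plan is to reduce the statement to the corresponding failure in $\mathrm{HS}^G$, which was already established in the preceding lemma. The key point is that $M$ contains the relevant test family and sits inside $\mathrm{HS}^G$, so any hypothetical choice function in $M$ would survive into $\mathrm{HS}^G$ and contradict what we already know.

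First, I would verify that the family $\{E_\alpha \mid \alpha<\ka\}$ with $E_\alpha=\{d_\alpha, e_\alpha\}$ belongs to $M$. Since $M=V(\{F_0,F_1\})$ is by definition the least inner model of $\ZF$ containing all of $V$ together with $\{F_0,F_1\}$, we have $F_0 \in M$, and thus the indexed function $\alpha \mapsto E_\alpha = F_0(\alpha)$ is an element of $M$. Therefore $\{E_\alpha \mid \alpha<\ka\}$ is a legitimate witness family for $\AC_{\ka,2}$ inside $M$.

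Next, I would invoke the inclusion $M \subseteq \mathrm{HS}^G$, which was noted right before the lemma: indeed $V \subseteq \mathrm{HS}^G$, $F_0, F_1 \in \mathrm{HS}^G$, and $\mathrm{HS}^G$ is a transitive model of $\ZF$, so the minimal inner model of $\ZF$ generated by $V \cup \{F_0,F_1\}$ is contained in $\mathrm{HS}^G$.

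Now suppose for contradiction that $f \in M$ is a choice function for $\{E_\alpha \mid \alpha<\ka\}$. Since $M \subseteq \mathrm{HS}^G$, we have $f \in \mathrm{HS}^G$, and $f$ is still a choice function for the same family in $\mathrm{HS}^G$. This directly contradicts the earlier lemma establishing the failure of $\AC_{\ka,2}$ in $\mathrm{HS}^G$ witnessed by this very family. Hence no such $f$ exists and $\AC_{\ka,2}$ fails in $M$.

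There is no genuine obstacle here: all of the combinatorial work—constructing the symmetric names $\dot d_\alpha$, $\dot e_\alpha$ and exploiting the automorphism $\pi_{\vec X}$ with $X_\alpha \in W$ to swap them—was carried out in the previous lemma for $\mathrm{HS}^G$. The present lemma is purely an absoluteness/inclusion argument, and the only thing one must be careful about is ensuring that both the test family and the hypothetical choice function are transferred correctly between the two models.
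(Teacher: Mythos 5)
Your proposal is correct and is essentially the paper's own argument: the paper derives the lemma immediately from $F_0 \in M \subseteq \mathrm{HS}^G$, exactly as you do, using that the witnessing family lies in $M$ and that any choice function in $M$ would persist to $\mathrm{HS}^G$, contradicting the preceding lemma. You were also right to note the one subtle point, namely that one must use the specific witnessing family (not just the bare failure of $\AC_{\ka,2}$ in the larger model), since failure of choice does not automatically pass down to submodels.
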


Define the function $H$ on $\ka$ by
$H(\alpha)=F_1(\alpha) \cup \bigcup_{\beta<\alpha} F_0(\beta)=\p(\ka)^{V[G_\alpha]} \cup \{d_\beta, e_\beta \mid \beta <\alpha\}$.
Note that $H \in M$, $H(\alpha) \in V[G_{\alpha}]$, 
and whenever $\alpha<\beta<\ka$ we have $H(\alpha) \subseteq H(\beta)$.

\begin{lemma}\label{5.16+}
In $M$,
for every set $S \in M$,
there is an ordinal $\gamma$ and a surjection
from $\gamma \times {}^{<\om}(\bigcup_{\alpha<\ka} H(\alpha))$ onto $S$.
\end{lemma}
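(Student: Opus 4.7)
The plan is to invoke Fact~\ref{model} with the set $Z=\{F_0,F_1\}$ and the inner model $V$, which yields an ordinal $\gamma_0$ and a surjection $f\in V$ from ${}^{<\om}\gamma_0\times{}^{<\om}\mathrm{trcl}(\{Z\})$ onto $S$. It therefore suffices to construct, inside $M$, a surjection $\Phi$ from $\om\times\bigcup_{\alpha<\ka}H(\alpha)$ onto $\mathrm{trcl}(\{Z\})$. Composing $f$ with the componentwise application of $\Phi$, and then absorbing the $V$-well-orderable factor ${}^{<\om}\gamma_0\times{}^{<\om}\om$ into a single ordinal $\gamma$, will give the required surjection from $\gamma\times{}^{<\om}\bigcup_{\alpha<\ka}H(\alpha)$ onto $S$ as an element of $M$.

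To construct $\Phi$, I would first collect what $\bigcup_{\alpha<\ka}H(\alpha)$ already contains: every ordinal $\le\ka$ (being a subset of $\ka$ in $V=V[G_0]$, hence an element of $F_1(0)\subseteq H(0)$); every subset of $\ka$ in any $V[G_\alpha]$ with $\alpha<\ka$ (as an element of $H(\alpha)$); each $d_\beta,e_\beta$ (as an element of $H(\beta+1)$); and every element of any $d_\alpha$, $e_\alpha$, or $F_1(\alpha)=\p(\ka)^{V[G_\alpha]}$, since these are subsets of $\ka$ in some $V[G_\gamma]$ with $\gamma<\ka$. Unraveling Kuratowski pairs, the only elements of $\mathrm{trcl}(\{Z\})$ \emph{not} already in $\bigcup_{\alpha<\ka}H(\alpha)$ are $Z$, $F_0$, $F_1$, and, for each $\alpha<\ka$ and each $i\in\{0,1\}$, the three auxiliary sets $\seq{\alpha,F_i(\alpha)}$, $\{\alpha,F_i(\alpha)\}$, and $F_i(\alpha)$ itself (the singleton $\{\alpha\}$ is a subset of $\ka$ and so is already covered).

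Accordingly I would define $\Phi:\om\times\bigcup_{\alpha<\ka}H(\alpha)\to\mathrm{trcl}(\{Z\})$ by a finite case-split on its first coordinate $n$: three cases output the fixed sets $Z$, $F_0$, $F_1$; six further cases, when the second coordinate happens to be an ordinal $\alpha<\ka$, output one of $\seq{\alpha,F_i(\alpha)}$, $\{\alpha,F_i(\alpha)\}$, $F_i(\alpha)$ for $i\in\{0,1\}$ (and a harmless default otherwise); and a final case sends $(n,x)$ to $x$ itself, covering every element of $\bigcup_{\alpha<\ka}H(\alpha)$. Because $\Phi$ is definable using only $F_0,F_1\in M$ as parameters, $\Phi\in M$, and then the reduction outlined above completes the argument. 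The main obstacle is simply the bookkeeping of the case analysis: one must verify that the enumeration of exceptional elements is exhaustive, which reduces to checking that every ordinal appearing in $\mathrm{trcl}(\{Z\})$ is $\le\ka$ and that every element hidden inside a $d_\alpha$, $e_\alpha$, or $F_1(\alpha)$ is a subset of $\ka$ belonging to some $V[G_\beta]$ with $\beta<\ka$, so that it is genuinely captured by one of the $H(\alpha)$'s.
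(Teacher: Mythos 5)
Your proposal is correct and follows essentially the same route as the paper: apply Fact~\ref{model} to $V(\{F_0,F_1\})$ and then build an explicit surjection onto $\mathrm{trcl}(\{F_0,F_1\})$ by a finite case analysis, observing that everything in the transitive closure except $F_0$, $F_1$ and the finitely many sets built from each pair $\seq{\alpha, F_i(\alpha)}$ already lies in $\bigcup_{\alpha<\ka}H(\alpha)$ as a subset of $\ka$ in some $V[G_\beta]$ or as some $d_\beta, e_\beta$. The only (harmless) cosmetic differences are that you read the index $\alpha$ off the $H$-coordinate when it happens to be an ordinal rather than adding a separate $\ka$-factor to the domain as the paper does, and that your ``$f\in V$'' should be ``$f\in V(\{F_0,F_1\})$'' --- a slip inherited from the literal wording of Fact~\ref{model}, and immaterial since your $\Phi$ is definable in $M$ from $F_0,F_1$.
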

\begin{proof}

First we show that, in $M$, there is a surjection from $\ka \times \bigcup_{\alpha<\ka} H(\alpha)$
onto $\mathrm{trcl}(\{F_0, F_1\})$.
Since $\mathrm{trcl}(\{F_0, F_1\})=\{F_0, F_1\} \cup
\mathrm{trcl}(F_0) \cup \mathrm{trcl}(F_1)$,
it is sufficient to show that there is a surjection from
$\ka \times \om \times \bigcup_{\alpha<\ka} H(\alpha)$ onto $\mathrm{trcl}(F_0)$ and from
$\ka \times \om\times \bigcup_{\alpha<\ka} H(\alpha)$ onto $\mathrm{trcl}(F_1)$.
We show only for $\mathrm{trcl}(F_0)$,
we can take a surjection onto $\mathrm{trcl}(F_1)$ by a similar way.

Recall that for a set $X$, the transitive closure $\mathrm{trcl}(X)$ is defined as follows:
$X_0=X$, $X_{n+1}=\bigcup X_n$, and $\mathrm{trcl}(X)=\bigcup_n X_n$.
Hence the set $\mathrm{trcl}(F_0)$ is the union:
\begin{align*}
\{\seq{\alpha, \{d_\alpha, e_\alpha\}} \mid \alpha<\ka\}
&\cup \{\{\alpha\}, \{\alpha, \{d_\alpha, e_\alpha\}\} \mid \alpha<\ka\}\\
&\cup \{\alpha \mid \alpha<\ka\} \cup \{\{d_\alpha, e_\alpha\} \mid \alpha<\ka\}\\
& \cup \{d_\alpha, e_\alpha \mid \alpha<\ka\} \cup \bigcup_{\alpha<\ka} (d_\alpha \cup e_\alpha).
\end{align*}

Define $f:\ka \times \om \times \bigcup_{\alpha<\ka} H(\alpha) \to \mathrm{trcl}(F_0)$ in $M$
as follows.
\begin{itemize}
\item $f(\alpha,0,x)=\seq{\alpha, \{d_\alpha, e_\alpha\}}$.
\item $f(\alpha,1, x)=\{\alpha\}$ and 
$f(\alpha,2, x)=\{\alpha, \{d_\alpha, e_\alpha\}\} $.
\item $f(\alpha,3,x)=\alpha$
and $f(\alpha,4,x)=\{d_\alpha, e_\alpha\}$.
\item $f(\alpha, n, x)=x$ for $n \ge 5$.
\end{itemize}
It is straightforward to check that this $f$ works.

Now fix a set $S \in M$.
By Fact \ref{model},
there is a function $f' \in M$ and an ordinal $\gamma'$ such that
$f'$ is a surjection from ${}^{<\om} \gamma' \times {}^{<\om}(\mathrm{trcl}(\{F_0, F_1\}))$
onto $S$.
We have known that there is a surjection from $\ka \times \bigcup_{\alpha<\ka} H(\alpha)$
onto $\mathrm{trcl}(\{F_0, F_1\})$.
Hence, by modifying $f'$ in $M$, 
we can take a map $f$ and an ordinal $\gamma$ such that
$f$ is a surjection from $\gamma \times {}^{<\om}
(\bigcup_{\alpha<\ka} H(\alpha))$
onto  $S$.
\end{proof}

We now check that $M$ satisfies the condition (1).
\begin{lemma}\label{5.19}
In $V[G]$, $M$ is closed under $<\ka$-sequences.
In particular $\AC_\la$ holds for every $\la<\ka$ in $M$.
\end{lemma}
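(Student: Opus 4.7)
The plan is to combine the $\ka$-closure of $\bbP$ with Lemma~\ref{5.16+}. Fix $\la<\ka$ and a sequence $\vec x=\seq{x_\xi:\xi<\la}\in V[G]$ with $x_\xi\in M$ for every $\xi$. Since $\bbP$ is $\ka$-closed over the $\ZFC$-model $V$, no new $<\!\ka$-sequence of elements of a set of $V$ is added; applied to the ordinal-valued map $\xi\mapsto\mathrm{rank}(x_\xi)$, this places that map in $V$, so its supremum $\rho^*$ is an ordinal of $V$. Consequently all the $x_\xi$'s sit inside the single set $S:=V_{\rho^*}^{M}\in M$, and Lemma~\ref{5.16+} applied to $S$ yields, inside $M$, an ordinal $\gamma$ and a surjection $s:\gamma\times{}^{<\om}(\bigcup_{\alpha<\ka}H(\alpha))\to S$.

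Working in $V[G]$, which satisfies $\ZFC$, pick a function $g:\la\to\gamma\times{}^{<\om}(\bigcup_{\alpha<\ka}H(\alpha))$ with $s\circ g=\vec x$. Since $s\in M$, it suffices to show $g\in M$. Write $g=(g_1,g_2)$. The first coordinate $g_1:\la\to\gamma$ is a $<\!\ka$-sequence of ordinals, hence in $V\subseteq M$ by $\ka$-closure. For $g_2$, monotonicity of $H$ together with finiteness of each tuple $g_2(\xi)$ lets us assign the least $\beta_\xi<\ka$ with $g_2(\xi)\in{}^{<\om}H(\beta_\xi)$; the resulting map $\xi\mapsto\beta_\xi$ is a $<\!\ka$-sequence of ordinals below $\ka$, so lies in $V$, and by regularity of $\ka$ in $V$ together with $\la<\ka$ is bounded by some $\beta^*<\ka$. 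Hence $g_2$ takes values in ${}^{<\om}H(\beta^*)$, and $H(\beta^*)\in V[G_{\beta^*}]$.

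The intermediate model $V[G_{\beta^*}]$ is a model of $\ZFC$ in which ${}^{<\om}H(\beta^*)$ is well-orderable; composing $g_2$ with such a bijection produces a $<\!\ka$-sequence of ordinals in $V[G]$, which by the $\ka$-closure of the quotient forcing $\bbP/G_{\beta^*}$ over $V[G_{\beta^*}]$ must already lie in $V[G_{\beta^*}]$. Therefore $g_2\in V[G_{\beta^*}]\subseteq M$ by Lemma~\ref{4.55}(1), and so $g\in M$ and $\vec x=s\circ g\in M$. The ``in particular'' assertion is then immediate: for a family $\{A_\xi:\xi<\la\}\in M$ of non-empty sets, $V[G]$ supplies a choice function, which the preceding argument places in $M$. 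I expect the main subtlety to lie in the second coordinate of $g$, since $\bigcup_{\alpha<\ka}H(\alpha)$ need not be well-orderable in $M$; the trick is to localize at a bounded level $H(\beta^*)$ and then exploit that $V[G_{\beta^*}]$ is an intermediate $\ZFC$-model to which $\ka$-closure of the tail forcing still applies.
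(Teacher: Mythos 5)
Your proof is correct and takes essentially the same route as the paper: reduce to a surjection from $\gamma \times {}^{<\om}(\bigcup_{\alpha<\ka}H(\alpha))$ via Lemma \ref{5.16+}, bound the relevant finite tuples inside a single well-orderable $H(\beta^*) \in V[G_{\beta^*}] \subseteq M$, and then use $\ka$-closure of the forcing to pull the resulting $<\ka$-sequence of ordinals back into $V \subseteq M$. Your preliminary rank-bounding step and the coordinatewise treatment of the preimage function are slightly more explicit than the paper's wording, but the argument is the same.
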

\begin{proof}
Take $\la<\ka$ and a set $\{y_\alpha \mid \alpha<\la\} \subseteq M$.
In $M$, we can take a large set $S$, a map $f$, and an ordinal $\gamma$
such that $\{y_\alpha \mid \alpha<\ka\} \subseteq S$ and 
$f$ is a surjection from $\gamma \times^{<\om} (\bigcup_{\alpha<\ka} H(\alpha))$ onto $S$.
Then there is $\delta<\ka$
such that $\{y_\alpha \mid \alpha<\la\} \subseteq f``(\gamma \times {}^{<\om} H(\delta))$.
In $M$, $H(\delta)$ is well-orderable;
$H(\delta) \in V[G_{\delta}] \subseteq M$ and
$H(\delta)$ is well-orderable in $V[G_{\delta}]$.
In particular $\gamma \times {}^{<\om} H(\delta)$ is also well-orderable in $M$.
So, by modifying $f$ in $M$,
we can take an ordinal $\theta$ and a map $g$ such that $\dom(g)=\theta$
and $\{y_\alpha \mid  \alpha<\la \} \subseteq \mathrm{range}(g)$.
In $V[G]$, pick $\xi_\alpha<\theta$ with $g(\xi_\alpha)=y_\alpha$.
Then $\seq{\xi_\alpha \mid \alpha<\la} \in V \subseteq M$,
hence $\{y_\alpha \mid \alpha<\mu \} =g``\{\xi_\alpha \mid \alpha<\la\} \in M$.
\end{proof}
Next we show that $M$ satisfies the conditions (6) and (7).

\begin{lemma}\label{4.12}
In $M$, let $S$ be an infinite set, and 
$U$ an ultrafilter over $S$.
Then $U$ has  a well-orderable  measure one set.
In addition, if $0^{\#}$ does not exist,
then $U$ has a well-orderable measure one set  with size $<\ka$.
\end{lemma}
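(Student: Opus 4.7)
The plan is to use Lemma~\ref{5.16+} to cover $S$ by an increasing $\ka$-sequence of well-orderable pieces, and then appeal to Lemma~\ref{4.11}(2) to force $U$ to concentrate on one of them. Working in $M$, apply Lemma~\ref{5.16+} to $S$ to obtain an ordinal $\gamma$ and a surjection $f:\gamma\times {}^{<\om}(\bigcup_{\alpha<\ka}H(\alpha))\to S$, and for each $\alpha<\ka$ set
\[
S_\alpha = f``(\gamma\times {}^{<\om}H(\alpha)).
\]
Since $H(\alpha)\in V[G_\alpha]\subseteq M$ and $V[G_\alpha]\models\AC$, the domain piece $\gamma\times {}^{<\om}H(\alpha)$ is well-orderable in $V[G_\alpha]$ and hence in $M$, so its image $S_\alpha$ under $f\in M$ is well-orderable in $M$. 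The sequence $\seq{S_\alpha\mid\alpha<\ka}$ belongs to $M$, is $\subseteq$-increasing, and has union $S$ (using that any finite tuple from $\bigcup_{\alpha<\ka}H(\alpha)$ lies in some $H(\alpha)$ by regularity of $\ka$).

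The key step, and what I expect to be the main obstacle, is showing that $S_\alpha\in U$ for some $\alpha<\ka$; given this, such an $S_\alpha$ serves as the desired well-orderable measure one set. Suppose toward contradiction that $S\setminus S_\alpha\in U$ for every $\alpha<\ka$. Define $g:S\to\ka$ in $M$ by $g(x)=\min\{\alpha<\ka\mid x\in S_\alpha\}$ and consider the pushforward ultrafilter $g_*(U)=\{Y\subseteq\ka\mid g^{-1}(Y)\in U\}\in M$ on $\ka$. By Lemma~\ref{4.11}(2) there is $Y\in g_*(U)$ with $\size{Y}<\ka$. Because $\bbP$ is $\ka$-closed in $V$, the cardinal $\ka$ remains regular in $V[G]$ and hence in $M\subseteq V[G]$, so $Y$ is bounded, say $Y\subseteq\xi<\ka$. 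The monotonicity of the chain then gives
\[
g^{-1}(Y)\subseteq\bigcup_{\beta<\xi}S_\beta\subseteq S_\xi,
\]
so $S_\xi\in U$, contradicting the assumption.

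For the second statement, suppose $0^{\#}$ does not exist and pick $\alpha<\ka$ with $S_\alpha\in U$ from the first part. Choose a bijection $h\in M$ from $S_\alpha$ onto $\mu=\size{S_\alpha}^M$. If $\mu<\ka$, then $S_\alpha$ itself already has cardinality $<\ka$. Otherwise $h$ pushes $U\restriction S_\alpha$ forward to an ultrafilter on $\mu$ in $M$, to which Lemma~\ref{4.11}(2) (when $\mu=\ka$) or Lemma~\ref{4.11}(3) (when $\mu>\ka$, here invoking the $0^{\#}$ hypothesis) applies, yielding a measure one set of cardinality $<\ka$ whose $h$-preimage is a well-orderable measure one set of $U$ of cardinality $<\ka$.
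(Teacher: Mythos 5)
Your proposal is correct and follows essentially the same route as the paper: cover $S$ by the increasing sequence $S_\alpha=f``(\gamma\times{}^{<\om}H(\alpha))$ from Lemma \ref{5.16+}, push $U$ forward along $g(x)=\min\{\alpha\mid x\in S_\alpha\}$, and use Lemma \ref{4.11}(2) together with the regularity of $\ka$ to find a bounded measure one set, hence some $S_\alpha\in U$ that is well-orderable. The paper phrases the key step directly rather than by contradiction and compresses your final paragraph into a single citation of Lemma \ref{4.8}, but the argument is the same.
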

\begin{proof}
We work in $M$.
Take a map $f$ and an ordinal $\gamma$ such that
$f$ is a surjection from $\gamma \times {}^{<\om}(\bigcup_{\alpha<\ka} H(\alpha))$
onto $S$.

For $\alpha<\ka$, let $S_\alpha=f``(\gamma \times {}^{<\om} H(\alpha))$.
We know $S=\bigcup_{\alpha<\ka} S_\alpha$,
and $\seq{S_\alpha \mid \alpha<\ka} \in M$.
Define $g:S \to \ka$ as
$g(x)$ is the least $\alpha<\ka$ with $x \in S_\alpha$.
Consider the ultrafilter $g_*(U)$ over $\ka$.
We can find an ordinal $\alpha<\ka$ with  $\alpha \in g_*(U)$ by Lemma \ref{4.11},
so we have $g^{-1}(\alpha) \subseteq S_\alpha=f``(\gamma \times {}^{<\om} H(\alpha)) \in U$.
Hence, we may assume that $U$ is an ultrafilter over $S_\alpha$.
In $M$, $H(\alpha)$ is well-orderable.
Hence, in $M$, $S_\alpha=f``(\gamma \times {}^{<\om} H(\alpha))$ is a well-orderable measure one set of $U$.
Moreover, if $0^{\#}$ does not exist, 
then by Lemma \ref{4.8}, $U$ has a well-orderable measure one set with size $<\ka$.
\end{proof}

We are ready to prove Theorem \ref{1.8}.
We use the well-known fact that if the Countable Choice holds then
every infinite set has a countably infinite subset.
\begin{thm}\label{4.1}
If $\ZFC$ is consistent, then so  is $\ZF+\LT+\mathsf{WUF}+\neg \mathsf{AC}_{\om_1,2}$.
\end{thm}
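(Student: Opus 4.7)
The plan is to apply the general construction of the preceding section with the parameter $\ka = \om_1^V$, starting from a ground model $V$ of $\ZFC$ in which $0^{\#}$ does not exist (for instance $V = L$). Since $\bbP = \mathsf{Fn}(\om_1 \times \om_1, 2, \mathop{<}\om_1)$ is $\om_1$-closed, cardinals $\le \om_1$ are preserved, so $\om_1^V = \om_1^M$, where $M = V(\{F_0, F_1\})$ is the intermediate model built above. Lemmas \ref{5.16} and \ref{5.19} then immediately give that $M$ satisfies $\neg \AC_{\om_1,2}$ together with $\AC_\la$ for every $\la<\om_1$; in particular $\AC_\om$ holds in $M$.

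Next I would verify $\mathsf{WUF}$ in $M$. By Lemma \ref{4.11}(1), $\om$ carries a uniform (hence non-principal) ultrafilter $U_\om$ in $M$. For an arbitrary infinite $X \in M$, $\AC_\om$ supplies a countably infinite subset $A \subseteq X$; transporting $U_\om$ through a bijection $A \to \om$ yields a non-principal ultrafilter $U_A$ on $A$, which extends to a non-principal ultrafilter on $X$ via the pullback $\{Y \subseteq X : Y \cap A \in U_A\}$. Hence every infinite set in $M$ carries a non-principal ultrafilter.

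The main step is $\LT$. By Theorem \ref{2.2+} and Lemma \ref{2.1+}, it suffices to show, working in $M$, that for every ultrafilter $U$ over a set $I$ and every indexed family $\{A_i : i \in I\}$ of non-empty sets, there is a function $g$ on $I$ with $\{i \in I : g(i) \in A_i\} \in U$. Because $0^{\#}$ does not exist in $V$, and hence in the inner model $M$, Lemma \ref{4.12} produces a well-orderable measure one set $X \in U$ of cardinality $<\om_1$, so $X$ is at most countable in $M$. Inside $M$, $\AC_\om$ provides a choice function $g_0$ for the subfamily $\{A_i : i \in X\}$, and extending $g_0$ arbitrarily to all of $I$ gives a function $g$ with $X \subseteq \{i \in I : g(i) \in A_i\} \in U$, as required.

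The conceptual heart of the argument — and the step where the main difficulty already lies — is Lemma \ref{4.12} itself: the interaction of the symmetric-extension analysis (Lemma \ref{4.2}), Donder's dichotomy (Fact \ref{4.7}), and the non-existence of $0^{\#}$ jointly force every ultrafilter in $M$ to concentrate on a countable well-orderable set, and this is precisely what allows $\AC_\om$ to be leveraged into full $\LT$. Once this bound is in place, all three target properties — $\LT$, $\mathsf{WUF}$, and $\neg \AC_{\om_1,2}$ — coexist in $M$, yielding the desired relative consistency.
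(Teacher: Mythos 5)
Your proposal is correct and follows essentially the same route as the paper's own proof: take the Section \ref{sec5} model $M$ with $\ka=\om_1$ over a ground model without $0^{\#}$, get $\neg\AC_{\om_1,2}$ and $\AC_\om$ from Lemmas \ref{5.16} and \ref{5.19}, derive $\mathsf{WUF}$ by extending a non-principal ultrafilter on a countably infinite subset, and obtain $\LT$ via Lemma \ref{2.1+} by combining the countable well-orderable measure one sets of Lemma \ref{4.12} with the Countable Choice. The only cosmetic difference is that you route the reduction through Theorem \ref{2.2+} as well, which is harmless.
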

\begin{proof}
Assuming $\AC+$``$0^{\#}$ does not exist'' (e.g., suppose $V=L$), take the model $M$ as the above with the case $\ka=\om_1$.
The following hold in $M$:
\begin{enumerate}
\item The Countable Choice holds (Lemma \ref{5.19}).
\item $\om$ carries a non-principal ultrafilter (Lemma \ref{4.11}).
\item Every ultrafilter has a countable measure one set (Lemma \ref{4.12}).
\item $\AC_{\om_1,2}$ fails (Lemma \ref{5.16}).
\end{enumerate}
By (1) and (3) with Lemma \ref{2.1+}, $\LT$ holds.
By (1), every infinite set $S$ has a countably infinite subset $S'$,  and 
(2) guarantees that there is a non-principal ultrafilter $U'$ over $S'$.
Then we can extend $U'$ to a non-principal ultrafilter $U$ over $S$.
\end{proof}

The resulting model of Theorem \ref{4.1} has 
non-principal ultrafilters, but
no uniform ultrafilter over uncountable cardinals.

\begin{question}\label{5.18}
Is $\ZF+\LT+\neg \mathsf{AC}+$``every infinite cardinal carries a uniform ultrafilter''
relatively consistent with $\ZFC$?
\end{question}
We do know the answer to this question,
but we prove  that the statement $\LT+$``every infinite cardinal carries a uniform ultrafilter''
deduces more fragments of $\mathsf{AC}$ than the Countable Choice.

\begin{prop}
Suppose $\LT$ holds.
Let $\ka$ be an infinite cardinal.
If every infinite cardinal $\la \le \ka$ carries a uniform ultrafilter,
then $\mathsf{AC}_\la$ holds for every $\la \le \ka$.
\end{prop}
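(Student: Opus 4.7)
The plan is to prove $\AC_\la$ by induction on infinite cardinals $\la \le \ka$ (finite $\la$ are trivial and need no choice). So assume $\la \le \ka$ is infinite and that $\AC_\mu$ has been established for every cardinal $\mu < \la$. Given a family $\{A_\alpha \mid \alpha < \la\}$ of non-empty sets, I would introduce the auxiliary family $B_\beta := \prod_{\alpha \le \beta} A_\alpha$ for $\beta < \la$. Because $\la$ is a cardinal we have $|\beta + 1| < \la$, so the induction hypothesis $\AC_{|\beta+1|}$ (after reindexing $\beta + 1$ by $|\beta + 1|$) produces a choice function on $\{A_\alpha \mid \alpha \le \beta\}$, i.e., witnesses $B_\beta \neq \emptyset$. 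Hence $\langle B_\beta \mid \beta < \la\rangle$ is a definable indexed family of non-empty sets.

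Next, by hypothesis there is a uniform ultrafilter $U$ over $\la$. I would apply Lemma \ref{2.1+} to $\langle B_\beta \mid \beta < \la \rangle$ and $U$ to obtain a function $g$ on $\la$ with $X := \{\beta < \la \mid g(\beta) \in B_\beta\} \in U$. Uniformity of $U$ gives $|X| = \la$, which forces $X$ to be unbounded in $\la$ (any bounded $Y \subseteq \la$ lies in some $\beta < \la$, so $|Y| \le |\beta| < \la$ since $\la$ is a cardinal). For each $\alpha < \la$ set $\beta_\alpha := \min\{\beta \in X \mid \beta \ge \alpha\}$; since $g(\beta_\alpha) \in \prod_{\gamma \le \beta_\alpha} A_\gamma$, the assignment $f(\alpha) := g(\beta_\alpha)(\alpha) \in A_\alpha$ defines a choice function for $\{A_\alpha \mid \alpha < \la\}$, completing the induction step.

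The conceptual device is the same as in Tachtsis's proof that $\LT$ plus a non-principal ultrafilter on $\om$ yields $\AC_\om$: although $\LT$ (via Lemma \ref{2.1+}) only supplies a ``generic'' partial choice valid on a measure-one set, uniformity of $U$ upgrades this partial choice to one defined on an unbounded subset of $\la$, and the telescoping products $B_\beta$ then let us stitch the partial data into a total choice function. The only delicate step is ensuring that the auxiliary products $B_\beta$ are genuinely non-empty without smuggling in additional choice, which is exactly what the induction hypothesis together with $|\beta + 1| < \la$ provides; I expect this bookkeeping to be the only real obstacle, and it is resolved by the fact that $\la$ is a cardinal.
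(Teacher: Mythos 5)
Your proof is correct, and its core is the same as the paper's: telescoping products $B_\beta=\prod_{\alpha\le\beta}A_\alpha$, non-empty by the induction hypothesis since $|\beta+1|<\la$, combined with Lemma \ref{2.1+} applied to a uniform ultrafilter over $\la$, and then reading off a total choice function from the measure-one set $X$. The one place you genuinely diverge is that the paper splits the induction step into cases: for singular $\la$ it does not use the uniform ultrafilter over $\la$ at all, instead piecing together choice functions on initial segments via $\AC_{\cf(\la)}$ (available from the induction hypothesis since $\cf(\la)<\la$), and it reserves the ultrafilter argument for regular $\la$. You observe, correctly, that the case split is unnecessary: for any infinite cardinal $\la$, a set $X\subseteq\la$ with $|X|=\la$ is unbounded (a bounded subset lies inside some $\beta<\la$ and so has cardinality $\le|\beta|<\la$), which is all the construction $f(\alpha)=g(\beta_\alpha)(\alpha)$ needs. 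This gives a slightly cleaner uniform argument; the only thing the paper's version buys in exchange is the (unstated) extra information that at singular stages the uniform ultrafilter over $\la$ itself is not required, only those over smaller cardinals.
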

\begin{proof}
By induction on $\la \le \ka$.
The base step $\la=\om$ follows from Theorem \ref{2.1}.
If $\la$ is singular,
then $\mathsf{AC}_\la$ holds by the induction hypothesis as follows:
Take a family $\{A_\alpha \mid \alpha<\la\}$ of non-empty sets.
Take also an increasing sequence $\seq{\la_i \mid i<\cf(\la)}$ with limit $\la$.
For each $i<\cf(\la)$,
by the induction hypothesis,
there is a choice function of $\{A_\alpha \mid \alpha<\la_i\}$.
Again, by $\mathsf{AC}_{\cf(\la)}$,
we can choose a family $\{f_i \mid i<\cf(\la)\}$ such that 
each $f_i$ is a choice function of $\{A_\alpha \mid \alpha<\la_i\}$.
Then the function $f$ on $\la$ defined by
$f(\alpha)=f_i(\alpha)$, where $i$ is the least $i<\cf(\la)$ with $\alpha<\la_i$,
is a choice function of $\{A_\alpha \mid \alpha<\la\}$.

Now suppose $\la$ is regular. Fix a uniform ultrafilter $U$ over $\la$.
Let $\{B_\alpha \mid \alpha<\la\}$ be a family of non-empty sets.

For $\alpha<\la$, let $A_\alpha=\prod_{\beta \le \alpha} B_\beta$.
By the induction hypothesis, $A_\alpha$ is non-empty.
By $\LT$, there is a function $f$ on $\la$
such that $X=\{\alpha<\ka \mid f(\alpha) \in A_\alpha\} \in U$.
Since $U$ is uniform, we can define the function $g$ on $\la$ as follows:
$g(\alpha)=f(\beta)(\alpha)$, where $\beta $ is the least element of $X \setminus (\alpha+1)$.
It is easy to check that
$g(\alpha) \in B_\alpha$ for every $\alpha<\la$.
\end{proof}

It is known that if $\AC_\ka$ holds for every cardinal $\ka$,
then the Dependent Choice $\mathsf{DC}$ holds (see Jech \cite{J}).
\begin{prop}
Suppose $\LT$. If every infinite cardinal carries a uniform ultrafilter,
then $\mathsf{AC}_\ka$ holds for every infinite cardinal $\ka$,
and $\mathsf{DC}$ holds.
\end{prop}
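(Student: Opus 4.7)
The plan is to deduce both conclusions as immediate corollaries of what has already been set up. The first assertion is essentially a reformulation of the preceding proposition once we quantify over $\ka$, and the second assertion is obtained by combining the first with the standard implication from Jech cited in the paragraph before the statement.

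For the first assertion, fix an arbitrary infinite cardinal $\ka$. The hypothesis says that \emph{every} infinite cardinal carries a uniform ultrafilter, so in particular every infinite cardinal $\la \le \ka$ does. Applying the preceding proposition with this $\ka$ then yields $\AC_\la$ for every $\la \le \ka$; taking $\la = \ka$ gives $\AC_\ka$. Since $\ka$ was arbitrary, $\AC_\ka$ holds for every infinite cardinal. Note that $\AC_n$ for finite $n$ is a theorem of $\ZF$, so we actually have $\AC_\ka$ for every cardinal $\ka$.

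For the second assertion I would simply invoke the cited fact from Jech: if $\AC_\ka$ holds for every cardinal $\ka$, then $\mathsf{DC}$ holds. Combined with what we established in the previous paragraph, this yields $\mathsf{DC}$. I do not expect any genuine obstacle here; all the substantive work has already been done in the preceding proposition (whose proof uses $\LT$ applied to the families $A_\alpha = \prod_{\beta \le \alpha} B_\beta$ together with a uniform ultrafilter on $\la$ and an induction up to $\ka$), and the remaining step is a direct citation.
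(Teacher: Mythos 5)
Your proof is correct and is exactly the argument the paper intends: the paper states this proposition without proof as an immediate consequence of the preceding proposition (instantiated at each infinite $\ka$) together with the cited fact from Jech that $\AC_\ka$ for all cardinals implies $\mathsf{DC}$. Your remark that $\AC_n$ for finite $n$ is a theorem of $\ZF$ correctly disposes of the only possible quibble about quantifying over all cardinals rather than all infinite cardinals.
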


\begin{question}
Suppose $\LT$.
Does $\mathsf{DC}$ hold provided 
that $\om$ carries a non-principal ultrafilter?
\end{question}

\section{Choiceless models of $\LT$ (II)}\label{sec6}
By using a model $M$ in Section \ref{sec5},
we will give a proof of Theorem \ref{1.9}.
For this sake, we need the following lemma and theorem.

\begin{lemma}\label{7.1}
The following are  equivalent:
\begin{enumerate}
\item There is an ultrafilter which is not $\sigma$-complete.
\item $\om$ carries a non-principal ultrafilter.
\end{enumerate}
\end{lemma}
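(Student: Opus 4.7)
The plan is to handle the two implications separately, with the nontrivial direction $(1) \Rightarrow (2)$ reducing immediately to Lemma \ref{6.22}.

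For $(2) \Rightarrow (1)$, I would give a direct, elementary argument. If $U$ is a non-principal ultrafilter over $\om$, then for each $n<\om$ the singleton $\{n\}$ is not in $U$, so $\om \setminus \{n\} \in U$. But the countable intersection $\bigcap_{n<\om}(\om \setminus \{n\}) = \emptyset$ is plainly not in $U$, so $U$ itself witnesses the failure of $\sigma$-completeness.

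For $(1) \Rightarrow (2)$, suppose $U$ is an ultrafilter over some set $S$ which fails to be $\sigma$-complete, i.e., fails to be $\om_1$-complete. Since every ultrafilter is $\om$-complete, the least cardinal $\ka$ for which there exists $f:\ka \to U$ with $\bigcap_{\alpha<\ka} f(\alpha)\notin U$ is precisely $\om$, so Lemma \ref{6.22} applies with $\ka = \om$. It yields a function $g:S \to \om$ such that $g^{-1}(n) \notin U$ for every $n < \om$. The push-forward $g_*(U)=\{X \subseteq \om \mid g^{-1}(X) \in U\}$ is an ultrafilter over $\om$, and since no singleton $\{n\}$ belongs to $g_*(U)$, it is non-principal.

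There is essentially no obstacle here: all the real work is already packaged into Lemma \ref{6.22}, and the remainder is bookkeeping. The one small point worth flagging explicitly is that, because every ultrafilter is $\om$-complete, the hypothesis ``$U$ is not $\sigma$-complete'' is equivalent to ``$U$ is $\om$-complete but not $\om^+$-complete,'' which is exactly the situation Lemma \ref{6.22} is designed to treat.
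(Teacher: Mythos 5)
Your proposal is correct and follows the same route as the paper: the paper's proof of $(1)\Rightarrow(2)$ also produces a map $g:S\to\om$ with $g^{-1}(n)\notin U$ for all $n$ (i.e., an application of Lemma \ref{6.22} with $\ka=\om$) and takes the push-forward $g_*(U)$, while the paper dismisses $(2)\Rightarrow(1)$ as trivial where you spell it out. No gaps.
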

\begin{proof}
(2) $\Rightarrow$ (1) is trivial.

For (1) $\Rightarrow$ (2),
fix an ultrafilter $U$ over a set $S$
which is not $\sigma$-complete.
We can find a map $g:S \to \om$ such that
$g^{-1}(n) \notin U$ for every $n<\om$.
Then $g_*(U)$ is a non-principal ultrafilter over $\om$.
\end{proof}

Here we recall some basic definitions for filters.
Let $S$ be a set and $F$ a filter over $S$.
A set $X \subseteq S$ is \emph{$F$-positive} if $X \cap Y \neq \emptyset$ for every $Y \in F$.
For an $F$-positive set $X$,
let $F \restriction X=\{Y \subseteq S \mid Y \cup (S \setminus X) \in F\}$.
$F \restriction X$ is a filter over $S$ with $X \in F \restriction X$.
Let $\ka$ be a cardinal. A filter $F$ is \emph{$\ka$-saturated} 
if for every indexed family $\{X_\alpha \mid \alpha<\ka\}$ of positive sets,
there are $\alpha<\beta<\ka$ such that $X_\alpha \cap X_\beta$ is $F$-positive.

See Kanamori \cite{K} for the proof of the following Tarski's theorem.
\begin{fact}[Tarski, in $\ZFC$]\label{Tarski}
If $F$ is a $(2^\om)^+$-complete $\om_1$-saturated filter,
then there is an $F$-positive set $X$ such that
$F \restriction X$ is an ultrafilter.
\end{fact}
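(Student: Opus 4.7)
The plan is to argue by contradiction. Assume $F$ is $(2^\om)^+$-complete and $\om_1$-saturated, yet for every $F$-positive set $X$ the restriction $F \restriction X$ is not an ultrafilter. The failure of $F \restriction X$ to be an ultrafilter is equivalent to the existence of some $Y \subseteq S$ with both $X \cap Y$ and $X \setminus Y$ being $F$-positive; so under our assumption, every $F$-positive set splits into two disjoint positive pieces. Using the Axiom of Choice, I would build a binary splitting tree $\{X_s : s \in {}^{<\om} 2\}$ of $F$-positive sets with $X_\emptyset = S$ and $X_s = X_{s \frown 0} \sqcup X_{s \frown 1}$ at each successor node, with both children positive. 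For each branch $b \in {}^\om 2$ let $Y_b = \bigcap_{n<\om} X_{b \restriction n}$; the sets $Y_b$ are pairwise disjoint, and since every element of $S$ follows a unique branch through the tree, they partition $S$.

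Both hypotheses now enter decisively. By $\om_1$-saturation, at most countably many $Y_b$ are $F$-positive, since pairwise disjoint positive sets constitute an antichain in the quotient Boolean algebra $\p(S)/I$, where $I$ is the dual ideal of $F$. By $(2^\om)^+$-completeness of $I$, the union of the (at most $2^\om$ many) $F$-null $Y_b$'s is itself $F$-null, and so the positive $Y_b$'s can be enumerated as $\{Y_{b_n} : n<\om\}$ with $\bigsqcup_n Y_{b_n} \in F$. Thus the standing assumption forces $F$ to decompose, modulo an $F$-null set, into a countable disjoint family of $F$-positive sets, each of which again fails the ultrafilter conclusion and therefore splits further.

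The main obstacle is to close the argument from this configuration. My plan is to iterate the decomposition transfinitely: inside each $F$-positive $Y_{b_n}$, rerun the construction to obtain a countable decomposition of $Y_{b_n}$ into positive pieces, and so on. Formalizing this as a transfinite recursion of height $\om_1$, one grows a tree of $F$-positive sets by splitting at successor levels and taking meets along branches at countable limit levels; since $(2^\om)^+$-completeness gives all meets of size at most $2^\om$, enough limit-stage meets remain positive to sustain the construction. The goal is that at some countable stage $\alpha < \om_1$, the resulting collection of pairwise disjoint positive sets must reach cardinality $\om_1$, contradicting $\om_1$-saturation. The delicate step is precisely the limit-stage bookkeeping: countable intersections of positive sets need not be positive, so a careful pruning argument (retaining only branches along which meets stay positive, and using the completeness to bound how many branches can die out simultaneously) is needed to ensure that the iteration reaches the forbidden $\om_1$-antichain before collapsing.
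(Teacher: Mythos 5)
The paper states this fact without proof (it is quoted from Kanamori), so I am judging your argument on its own. Your overall architecture --- assume every positive set splits, build a splitting tree, use $(2^\om)^+$-completeness to control the branches that die at limit stages and $\om_1$-saturation to control the ones that survive --- is indeed the right one, and your first two paragraphs are correct: at each countable limit stage there are at most $\om^\om=2^\om$ branches, the union of the null ones is null, at least one branch survives because $F$ is proper, and saturation keeps each level countable. So the ``delicate limit-stage bookkeeping'' you worry about is actually unproblematic, and no pruning beyond ``discard the null branches'' is needed.

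The genuine gap is the closing contradiction, which you have misidentified. You propose that ``at some countable stage $\alpha<\om_1$ the resulting collection of pairwise disjoint positive sets must reach cardinality $\om_1$.'' This never happens: $\om_1$-saturation forces every level of your tree to be countable, and at any countable stage $\alpha$ the total number of sets constructed so far is $\size{\alpha}\cdot\om=\om$, so the construction runs through all $\om_1$ stages without any level blowing up. The antichain that kills the construction is not a level; it is assembled \emph{across} levels from a cofinal branch. Concretely: the accumulated discarded null sets over all $\om_1$ stages number at most $\om_1\le 2^\om$, so their union $N$ is null by $(2^\om)^+$-completeness, and any $x\in S\setminus N$ determines a chain $\seq{X_\alpha \mid \alpha<\om_1}$ with $x\in X_\alpha\in\mathcal{P}_\alpha$ meeting every level. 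The sets $D_\alpha=X_\alpha\setminus X_{\alpha+1}$ (the sibling of $X_{\alpha+1}$ inside $X_\alpha$) are positive by construction and pairwise disjoint, since $D_\beta\subseteq X_{\alpha+1}$ for $\beta>\alpha$ while $D_\alpha\cap X_{\alpha+1}=\emptyset$. This is an antichain of size $\om_1$, contradicting $\om_1$-saturation. Without this step your iteration simply never terminates in a contradiction --- indeed, an abstract tree of height $\om_1$ with countable, ever-splitting levels is consistent (an Aronszajn tree), so you must use the specific fact that a point of $S$ surviving all stages rides a cofinal branch.
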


\begin{thm}\label{4.19}
Suppose $V$ satisfies $\mathsf{AC}$.
Then there is a c.c.c.\ forcing extension $V[G]$
and an intermediate model $M$ between $V$ and $V[G]$
satisfying the following:
\begin{enumerate}
\item $M$ is a model of $\ZF+\LT$.
\item If $S \in V$ is a set, $\ka$ is an uncountable cardinal, and $U\in V$ is a $\ka$-complete ultrafilter over $S$ in $V$,
then the filter generated by $U$, $\{X\in \p(S)^M \mid \exists Y\in U(Y \subseteq X)\}$, is a $\ka$-complete ultrafilter in $M$.
\item In $M$, every ultrafilter is $\sigma$-complete.
\item $\AC_{\om,2}$ fails in $M$.
\end{enumerate}
\end{thm}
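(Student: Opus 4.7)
The plan is to adapt the symmetric extension of Section \ref{sec5} to a c.c.c.\ setting with $\ka=\om$. Fix in $V$ a non-principal ultrafilter $W$ on $\om$ and let $\bbP=\mathrm{Fn}(\om\times\om,2,\mathop{<}\om)$ be the Cohen forcing with finite conditions, which is c.c.c. Define the automorphisms $\pi_{\vec X}$ for $\vec X=\langle X_n\mid n<\om\rangle$, the group $\calG$, and the normal filter $\calF$ generated by $\{\mathrm{fix}(e)\mid e\in[\om]^{<\om}\}$ verbatim as in Section \ref{sec5}. Take a $(V,\bbP)$-generic $G$ and form $\mathrm{HS}^G$. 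Define the canonical reals $x_n$, the sets $d_n=\{x_n\triangle Y\mid Y\in\p(\om)^V,\ Y\notin W\}$, $e_n=\{x_n\triangle Y\mid Y\in\p(\om)^V,\ Y\in W\}$, and put $F_0(n)=\{d_n,e_n\}$, $F_1(n)=\p(\om)^{V[G_n]}$. The intended model is $M=V(\{F_0,F_1\})$.

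Item (2) is the classical Levy--Solovay argument: since $\size{\bbP}=\om<\ka$ for any uncountable $\ka$, the upward closure of a $\ka$-complete $U\in V$ over $\p(S)^{V[G]}$ is a $\ka$-complete ultrafilter in $V[G]$, whose trace on $\p(S)^M$ (with $V\subseteq M\subseteq V[G]$) remains a $\ka$-complete ultrafilter in $M$. Item (4), the failure of $\AC_{\om,2}$, is the transcription of Lemma \ref{5.16}: a hereditarily symmetric name $\dot f$ for a choice function on $\{\{d_n,e_n\}\mid n<\om\}$ would be fixed by some $\mathrm{fix}(e)$, and picking $n\notin e$ and $p\in G$ deciding the value $\dot f(\check n)$, a swap $\pi_{\vec X}$ with $X_n\in W$ cofinite enough to miss the finite column-$n$ support of $p$ fixes both $p$ and $\dot f$ but interchanges $\dot d_n$ and $\dot e_n$ by the analog of Lemma \ref{5.10}, a contradiction.

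The substantive verifications are (3) and (1). For (3), Lemma \ref{7.1} reduces the problem to showing no non-principal ultrafilter on $\om$ exists in $M$, for which the Lemma \ref{4.5} argument transposes to $\ka=\om$: given a hereditarily symmetric name $\dot U$ for such an ultrafilter with $\mathrm{fix}(e)\subseteq\mathrm{sym}(\dot U)$, pick $n\notin e$; whichever of $\dot x_n$, $\check\om\setminus\dot x_n$ is forced into $\dot U$ by some $p\in G$, a swap on column $n$ with $X_n$ a cofinite subset of $\om$ missing the finite support of $p$ in that column fixes $p$ and $\dot U$ while shifting $\dot x_n$ by a cofinite set, forcing the resulting finite intersection into $\dot U$ and making $U$ principal. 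For (1), $\LT$, I would invoke Lemma \ref{2.1+} together with the $M$-analog of Lemma \ref{4.12} (using the surjection structure provided by the $H$-function of Lemma \ref{5.16+}): every ultrafilter $U\in M$ over $I$ is either principal (trivial) or, by (3), $\sigma$-complete, and in the latter case it carries a well-orderable measure-one set $X\in M$; one then restricts to $X$ and argues that the restriction $U\restriction X$ is essentially generated by a ground-model ultrafilter in $V$ (via (2) combined with $\bbP$-absoluteness), whence $V$'s Axiom of Choice supplies the required selector for $\{A_i\mid i\in X\}$.

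The main obstacle is item (1). Unlike in Section \ref{sec5}, the Countable Choice is unavailable in $M$ (it fails by (4)), so the ``countable measure-one set plus $\AC_\om$'' reduction used in Theorem \ref{4.1} is closed off, and one cannot merely pass to a countable measure-one set and choose. The verification instead hinges on the key technical claim that a $\sigma$-complete ultrafilter in $M$ on a well-orderable set is essentially inherited from an ultrafilter already present in $V$; once this is in hand, $V$'s $\AC$ produces the selector and $\LT$ follows, completing all four items of Theorem \ref{4.19}.
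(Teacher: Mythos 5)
Your setup, and your treatment of items (2), (3), and (4), match the paper: the model is $V(\{F_0,F_1\})$ for the Section \ref{sec5} system with $\ka=\om$, item (2) is Levy--Solovay, item (4) is the swap argument of Lemma \ref{5.16}, and item (3) follows from the $\ka=\om$ case of Lemma \ref{4.5} together with Lemma \ref{7.1}. The gap is in item (1), in two places. First, you defer the whole weight of the argument to the claim that a $\sigma$-complete ultrafilter $U$ of $M$ on a cardinal is ``essentially generated by a ground-model ultrafilter,'' but you offer no proof, and the mechanism you gesture at --- item (2) plus ``$\bbP$-absoluteness'' --- points in the wrong direction: (2) says ground-model ultrafilters generate ultrafilters in $M$, not that ultrafilters of $M$ trace to ultrafilters of $V$. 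The paper's Claim \ref{6.11} proves $U\cap V\in V$ by first showing $U$ is $((2^\om)^V)^+$-complete (a coding argument with ground-model reals using $\sigma$-completeness) and then applying Tarski's theorem (Fact \ref{Tarski}) to the $(2^\om)^+$-complete, $\om_1$-saturated filter $\{X\mid p\Vdash\check X\in\dot U\}$ in $V$; none of this is recoverable from what you wrote.

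Second, and more seriously, your concluding step --- ``$V$'s Axiom of Choice supplies the required selector for $\{A_i\mid i\in X\}$'' --- does not work, because the family $\{A_i\}$ lives in $M$, not in $V$; its members typically contain objects not in $V$ (the sets $d_n$, $e_n$, new reals), so $\AC$ in $V$ cannot be applied to it. This is exactly the point where the paper's Claim \ref{6.6} does real work: it forms the ultrapower embedding $j\colon V\to N$ by $U'=U\cap V$ in $V$, lifts it to $j\colon V[G]\to N[G]$ using the countability of $\bbP$, checks $j(F_0)=F_0$, $j(F_1)=F_1$ and $j(\bigcup_n H(n))=\bigcup_n H(n)$ so that $j$ restricts to an elementary embedding of $V(\{F_0,F_1\})$ into $N(\{F_0,F_1\})$, and then uses the surjection $g\colon\gamma\times{}^{<\om}(\bigcup_n H(n))\to\bigcup_i A_i$ from Lemma \ref{5.16+} to pull a witness $j(g)(\delta,\seq{r_0,\dotsc,r_k})\in A^*_{[\mathrm{id}]}$ back to a choice function $\alpha\mapsto g(h(\alpha),\seq{r_0,\dotsc,r_k})$ in $M$, where $h\in V$ represents $\delta$. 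Without this lifting-and-reflection step (or a substitute for it), your argument for $\LT$ in $M$ does not go through.
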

\begin{proof}
Our model $M$ is given in Section \ref{sec5} with the case $\ka=\om$.
Note that $\mathrm{Fn}(\om \times \om, 2, <\om)$ has the c.c.c.

We already know that the model $M$ satisfies the following:
\begin{enumerate}
\item There is no non-principal ultrafilter over $\om$.
\item $\mathsf{AC}_{\om,2}$ fails.
\item Every ultrafilter has a well-orderable measure one set.
\end{enumerate}
By (1) and Lemma \ref{7.1}, we also have:
\begin{enumerate}
\item[(4)] Every ultrafilter is $\sigma$-complete.
\end{enumerate}

\begin{claim}
If $U \in V$ is a $\ka$-complete ultrafilter in $V$ for some uncountable cardinal $\ka$,
then $U$ generates a $\ka$-complete ultrafilter in $M$.
\end{claim}
\begin{proof}
Since $\bbP$ is countable, it is known that $U$ generates a $\ka$-complete ultrafilter in $V[G]$
(e.g., see Kanamori \cite{K}).
Hence it also generates a $\ka$-complete ultrafilter in $M$.
\end{proof}

\begin{claim}\label{6.11}
In $M$, let $\ka$ be an infinite cardinal,
and $U$ an ultrafilter over $\ka$.
Then $U \cap V \in V$.
\end{claim}
\begin{proof}
Let $\la=(2^\om)^V$. First we prove that
$U$ is $\la^+$-complete.
If not, then we can find a cardinal $\mu \le \la$
and $f:\ka \to \mu$ 
such that
$f^{-1}(\gamma) \notin U$ for every $\gamma<\mu$.
Take $\mu$-many reals $\{r_\xi \mid \xi<\mu\} \subseteq \p(\om)^V$.
Define $X_n \in U$ as follows.
For $n<\om$, we know $\{\alpha \mid n \in r_{f(\alpha)}\}\in U$ or
$\{\alpha \mid n \notin r_{f(\alpha)}\}\in U$.
If 
$\{\alpha \mid n \in r_{f(\alpha)}\}\in U$ then
let $X_n=\{\alpha \mid n \in r_{f(\alpha)}\}$,
otherwise $X_n$ is $\{\alpha \mid n \notin r_{f(\alpha)}\}$.
Since $U$ is $\sigma$-complete,
we have $X=\bigcap_{n<\om} X_n \in U$.
By the assumption, we can pick $\alpha, \beta\in X$
 with $f(\alpha) \neq f(\beta)$.
Then for every $n<\om$, we have $n \in r_{f(\alpha)} \iff n \in r_{f(\beta)}$,
hence $r_{f(\alpha)}=r_{f(\beta)}$.
This is a contradiction.

To show that $U \cap V \in V$,
fix a name $\dot U$ for $U$ and $p \in \bbP$ such that
$p \Vdash$``$\dot U$ is an ultrafilter over $\ka$ in $M$.''
In $V$, let $F=\{X \subseteq \ka \mid p \Vdash \check X \in \dot U\}$.
Since $U$ is a $\la^+$-complete ultrafilter and $\bbP$ satisfies the c.c.c.,
we have that, in $V$, $F$ is a $(2^\om)^+$-complete $\om_1$-saturated 
filter over $\ka$.
By Fact \ref{Tarski} there must be an $F$-positive set $X \subseteq \ka$
 such that $F \restriction X$
is an ultrafilter. 
Pick $q \le p$ with $q \Vdash \check X \in \dot U$.
Then $q \Vdash \check{F} \restriction \check X=\dot U \cap V$,
as required.
\end{proof}

The next claim together with 
Lemma \ref{2.1+} yields $\LT$ in $M$.
\begin{claim}\label{6.6}
In $M$, let $U$ be an ultrafilter over $S$.
Then for every indexed family $\{A_s \mid s \in S\}$ of non-empty sets,
there is a function $g$ on $S$ such that
$\{s \mid g(x) \in A_s\} \in U$.
\end{claim}
\begin{proof}
Since $U$ has a well-orderable measure one set,
we may assume that $U$ is a non-principal $\sigma$-complete ultrafilter over an infinite cardinal $\ka$.
Let $U'=U \cap V$, which is a $\sigma$-complete ultrafilter over $\ka$ in $V$ by Claim \ref{6.11}.
In $V$, let $j:V \to N\approx {}^I V/ U'$ be the ultrapower elementary embedding 
induced by $U'$ where we identify the ultrapower ${}^I V/U'$ with its transitive collapse $N$.
Since $\bbP$ is countable,
in $V[G]$, we can extend $j$ to $j:V[G] \to N[G]$ by the canonical way.
Moreover we have that $j(F_0)=F_0$, $j(F_1)=F_1$, and 
$j \restriction V(\{F_0, F_1\})$ is an elementary embedding from
$V(\{F_0,F_1\}) $ to $N(\{F_0, F_1\})$.
We also note $j(\bigcup_n H(n))=\bigcup_n H(n)$.
Let $\mathrm{id}$ be the identity map on $\ka$.
We know that $U=\{X \in \p(\ka)^M \mid [\mathrm{id}] \in j(X)\}$,
and for every ordinal $\gamma$,
there is a function $f$ on $\ka$ in $V$ such that
$j(f)([\mathrm{id}])=\gamma$.

Now fix an indexed family $\{A_\alpha \mid \alpha<\ka\}\in M$ of non-empty sets.
Let $Y=\bigcup_{\alpha<\ka} A_\alpha$,
and take an ordinal $\gamma$ and a map $g \in M$
such that $g$ is a surjection from $\gamma \times {}^{<\om}(\bigcup_{n<\om} H(n))$ onto $Y$.
By the elementarity,
$j(g)$ is a surjection from $j(\gamma) \times {}^{<\om}(\bigcup_{n<\om} H(n))$ onto
$j(Y)$ and $j(\bigcup_{\alpha<\ka} A_\alpha) = j(Y)$.
Consider the family $j(\{A_\alpha \mid \alpha<\ka\})=\{A^*_\alpha \mid \alpha<j(\ka)\}$.
We know that $A^*_{[\mathrm{id}]}$ is non-empty, so we can take $\delta <j(\gamma)$ and
$\seq{r_0,\dotsc, r_k} \in {}^{<\om}(\bigcup_n H(n))$ 
such that $j(g)(\delta, \seq{r_0,\dotsc, r_k}) \in A^*_{[\mathrm{id}]}$.
Pick a function $h \in V$ with $j(h)([\mathrm{id}])=\delta$.
By the elementarity of $j$ again,
we have that
$\{\alpha<\ka \mid g(h(\alpha), \seq{r_0,\dotsc, r_k}) \in A_\alpha\}\in U$.
Hence $\alpha \mapsto g(h(\alpha), \seq{r_0,\dotsc, r_k})$ is a required
choice function in $M$.
\end{proof}

\end{proof}

Now we have Theorem \ref{1.9}.

\begin{thm}\label{6.13}
If $\ZFC+$``there is a measurable cardinal'' is 
consistent, then so is
$\ZF+\LT+\neg \mathsf{AC}_{\om,2}+$``there is a measurable cardinal''$+$
``every ultrafilter is $\sigma$-complete.''
\end{thm}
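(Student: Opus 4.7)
The plan is to derive Theorem \ref{6.13} essentially as a corollary of Theorem \ref{4.19}, using the measurable cardinal of the ground model as witness for the measurability in the final model. So I would begin with a model $V$ of $\ZFC+$``there is a measurable cardinal'' and let $\ka$ be measurable in $V$, witnessed by a $\ka$-complete non-principal ultrafilter $U \in V$ over $\ka$. Apply Theorem \ref{4.19} to $V$ to obtain the c.c.c.\ extension $V[G]$ and the intermediate model $M$ with $V \subseteq M \subseteq V[G]$. By items (1), (3), and (4) of that theorem, $M$ satisfies $\ZF+\LT+\neg \AC_{\om,2}+$``every ultrafilter is $\sigma$-complete.''

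The remaining task is to verify that $\ka$ is still measurable in $M$. For this I invoke item (2) of Theorem \ref{4.19}: since $\ka>\om$ and $U$ is a $\ka$-complete ultrafilter over $\ka$ in $V$, the set
\[
U^* = \{X \in \p(\ka)^M \mid \exists Y \in U \ (Y \subseteq X)\}
\]
is a $\ka$-complete ultrafilter over $\ka$ in $M$. It remains only to observe that $U^*$ is non-principal: for any $\alpha<\ka$, the set $\ka \setminus \{\alpha\}$ lies in $U$ (as $U$ is non-principal in $V$), so $\ka \setminus \{\alpha\} \in U^*$, whence $\{\alpha\} \notin U^*$. Thus $U^*$ witnesses that $\ka$ is measurable in $M$.

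I do not anticipate a hard step here; the work has already been done in Theorem \ref{4.19}, whose proof was engineered precisely so that $\ka$-complete ultrafilters from the ground model survive into $M$. The only thing to be careful about is that the cardinal $\ka$ from Theorem \ref{4.19} (used to index the symmetric forcing) is $\om$, which is different from the measurable cardinal $\ka$ under discussion here; to avoid notational clash I would rename one of them in the write-up. Once that bookkeeping is done, the proof is a direct combination of the properties already listed in Theorem \ref{4.19}, and no new construction is needed.
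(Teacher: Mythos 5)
Your proposal is correct and matches the paper's own proof, which likewise deduces the theorem directly from Theorem \ref{4.19} by observing that the measurable cardinal $\ka$ of the ground model remains measurable in $M$ via item (2). The extra details you supply (non-principality of the generated ultrafilter, the notational clash with the $\ka=\om$ case of the construction) are correct and harmless.
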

\begin{proof}
Suppose $V$ satisfies $\mathsf{AC}$ and there is a measurable cardinal $\ka$.
In the resulting model $M$ in Theorem \ref{4.19},
$\ka$ remains a measurable cardinal.
\end{proof}

If we suppose the existence of a strongly compact cardinal,
we can give a partial answer to Question \ref{5.18}.
Here we recall strongly compact  cardinals.
An uncountable cardinal $\ka$ is a \emph{strongly compact cardinal}
if for every cardinal $\la \ge \ka$,
there is a $\ka$-complete fine ultrafilter $U$ over $[\la]^{<\ka}$,
that is, $\{x \in [\la]^{<\ka} \mid \alpha \in x\} \in U$ for every $\alpha<\la$.
Every strongly compact cardinal is measurable.
If $\ka$ is strongly compact,
then for every regular $\la \ge \ka$,
there is a $\ka$-complete uniform ultrafilter over $\la$;
Fix a $\ka$-complete fine ultrafilter over $\la$.
Define $f:[\la]^{<\ka}\to \la$ by $f(x)=\sup(x)$.
Then one can check that $f_*(U)$ is a $\ka$-complete uniform ultrafilter over $\la$.
In $\ZFC$, the converse of this fact hold:
$\ka$ is strongly compact if  every regular $\la \ge \ka$ carries a
$\ka$-complete uniform ultrafilter.
We do not know if the converse still holds in $\ZF$.

\begin{thm}
If $\ZFC+$``there is a strongly compact cardinal'' is 
consistent, then so is
$\ZF+\LT+\neg \mathsf{AC}_{\om,2}+$
``every ultrafilter is $\sigma$-complete''+``there is a measurable cardinal $\ka$ such that
every regular cardinal $\la \ge \ka$
carries a uniform ultrafilter.''
\end{thm}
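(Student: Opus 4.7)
The plan is to reuse the model $M$ of Theorem~\ref{4.19} (taking $\ka=\om$ in the construction of Section~\ref{sec5}), but starting from a ground model $V\models\ZFC$ containing a strongly compact cardinal $\ka$. By Theorem~\ref{4.19}, the resulting $M$ already satisfies $\LT$, $\neg\mathsf{AC}_{\om,2}$, and ``every ultrafilter is $\sigma$-complete''. What has to be added is that, in $M$, $\ka$ remains measurable and every regular $\la\ge\ka$ in $M$ carries a uniform ultrafilter.

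The forcing used is $\bbP=\mathrm{Fn}(\om\times\om,2,\mathop{<}\om)$, which is countable and hence c.c.c. In $V$, the strong compactness of $\ka$ supplies, for every regular $\la\ge\ka$, a $\ka$-complete uniform ultrafilter $U_\la$ over $\la$ (the case $\la=\ka$ is just a measurability witness, since a $\ka$-complete non-principal ultrafilter on a regular $\ka$ is automatically uniform; for $\la>\ka$ one composes the fine $\ka$-complete ultrafilter on $[\la]^{<\ka}$ with the supremum map, as recalled just before the statement). By clause~(2) of Theorem~\ref{4.19}, the filter generated by $U_\la$ in $M$,
\[
U_\la^*=\{X\in\p(\la)^M\mid \exists Y\in U_\la\,(Y\subseteq X)\},
\]
is a $\ka$-complete ultrafilter in $M$. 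Applied to $\la=\ka$ this witnesses measurability of $\ka$ in $M$.

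The main bookkeeping step is to reconcile the ``regular cardinals $\ge\ka$ of $M$'' with those of $V$ and to check that uniformity transfers from $U_\la$ to $U_\la^*$. Since $V\subseteq M\subseteq V[G]$ and $\bbP$ is c.c.c., the three models agree on cardinals and cofinalities above $\om$. If $\la\ge\ka$ is a regular cardinal of $M$, then $\la$ is also a cardinal of $V$ (any $V$-bijection from a smaller ordinal would lie in $M$), and any $V$-cofinal sequence in $\la$ of length $<\la$ would likewise lie in $M$, so $\cf^V(\la)=\la$; hence $\la$ is regular in $V$, and the ultrafilter $U_\la$ above exists. For uniformity of $U_\la^*$ in $M$: if $Y\in U_\la$ then $|Y|^V=\la$, whence $|Y|^{V[G]}=\la$ by the c.c.c., and $|Y|^M\ge|Y|^{V[G]}=\la$ since any $M$-bijection is a $V[G]$-bijection; combined with $Y\subseteq\la$ this gives $|Y|^M=\la$. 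Every $X\in U_\la^*$ contains such a $Y$, so $|X|^M=\la$, proving that $U_\la^*$ is uniform in $M$.

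The only genuine obstacle is this coordination of cardinalities across $V$, $M$, and $V[G]$; no ingredient beyond Theorem~\ref{4.19} and the definition of strong compactness is needed, so the desired model is precisely $M$.
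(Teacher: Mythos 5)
Your proposal is correct and follows essentially the same route as the paper: start from $V\models\ZFC$ with a strongly compact $\ka$, use strong compactness to get $\ka$-complete uniform ultrafilters over every regular $\la\ge\ka$ in $V$, and push them into the model $M$ of Theorem \ref{4.19} via its clause (2). The paper leaves the bookkeeping (agreement of regular cardinals between $V$, $M$, and $V[G]$, and transfer of uniformity) implicit, whereas you verify it explicitly; your verification is sound.
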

\begin{proof}
Suppose $V$ satisfies $\mathsf{AC}$ and there is a strongly compact cardinal $\ka$.
In $V$, for every regular $\la \ge \ka$, there is a $\ka$-complete uniform ultrafilter over $\la$.
Then, in the resulting model $M$ in Theorem \ref{4.19},
every regular cardinal $\la \ge \ka$ carries a $\ka$-complete uniform ultrafilter.
\end{proof}

\section{Consistency strengths of some statements}\label{sec7}
We proved that $\LT+\mathsf{WUF(?)}$ does not imply 
$\AC_{\om, 2}$.
However our proof needed a large cardinal assumption.
In this section, we show that 
this large cardinal assumption is necessary.

\begin{prop}\label{6.32}
Suppose $\LT$.
Let $U$ be a non-principal ultrafilter over a set $S$.
Then there is a cardinal $\ka$ such that
$U$ is not $\ka$-complete.
\end{prop}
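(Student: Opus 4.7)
The plan is to argue by contradiction. Suppose $U$ is a non-principal ultrafilter over $S$ and $U$ is $\ka$-complete for every cardinal $\ka$. First I record the consequence analogous to the contrapositive of Lemma \ref{6.22}: if $U$ were $\ka$-complete for all $\ka$, then for every function $\phi : S \to \gamma$ with $\gamma$ an ordinal, the partition $\{\phi^{-1}(\beta) : \beta < \gamma\}$ would have some piece in $U$ (otherwise the union of the $\gamma$-many measure-zero pieces, which is all of $S$, would be measure zero by $\gamma^+$-completeness). Thus every ordinal-valued function on $S$ is constant on a $U$-measure-one set.

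Next I apply $\LT$ through Lemma \ref{2.1+} to the family $\{S \setminus \{s\} : s \in S\}$, each member of which is non-empty since $U$ is non-principal (so $S$ is infinite). This yields $g : S \to S$ with $T := \{s \in S : g(s) \neq s\} \in U$. Restricting to $T$ (which preserves all the hypotheses on $U$), I may assume $g(s) \neq s$ for every $s \in S$. The iterates $g^n$ are then functions $S \to S$, and I consider $\psi : S \to \om \cup \{\infty\}$ defined by $\psi(s) = \min\{n \geq 1 : g^n(s) \in \{s, g(s), \ldots, g^{n-1}(s)\}\}$, or $\psi(s) = \infty$ if no such $n$ exists. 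By $\sigma$-completeness of $U$, $\psi$ is constant on a $U$-measure-one set. If the constant is $\infty$, then for every $s$ in a $U$-large set the sequence $(g^n(s))_{n < \om}$ is injective, and fixing any single such $s_0$ (no choice needed) exhibits an injection $\om \hookrightarrow S$. If the constant is a finite $n \geq 1$, then on a $U$-large set $g$ has bounded forward orbit, and a further $\sigma$-complete classification by the value $k < n$ with $g^n(s) = g^k(s)$ produces a uniform orbit shape a.e.

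The final step is to extend the above into a well-orderable partition of $S$ by $U$-measure-zero pieces, which contradicts the $\la^+$-completeness of $U$ for $\la$ the ordinal indexing the partition. In the infinite-orbit case, the plan is to package iterated orbit extractions via a single application of $\LT$ to a family like $\{A_s : s \in S\}$ with $A_s$ the set of injective $\la$-sequences in $S$ starting at $s$, boosting through the Hartogs number of $S$; the $U$-large injective image then pulls back along $U$ to a measure-zero partition indexed by an ordinal. In the finite-orbit case, one quotients by the $g$-orbit equivalence and transports the argument to the quotient, or alternatively uses the orbit labels to define a suitable ordinal-valued function whose fibers are measure-zero. The main obstacle is exactly this closing step: fabricating a \emph{well-orderable} partition of $S$ into measure-zero pieces using only $\LT$ and the completeness hypothesis, without covertly using choice to pick representatives. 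My plan is to circumvent choice by folding the recursive orbit extractions into a single family to which Lemma \ref{2.1+} is applied, so that the entire ordinal-indexed sequence of disjoint pieces is obtained in one shot rather than stage by stage.
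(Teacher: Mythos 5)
Your opening moves are fine and match the paper's: the observation that $\ka$-completeness for all $\ka$ forces every ordinal-valued function on $S$ to be constant on a measure one set is exactly the first claim in the paper's proof, and the use of Lemma \ref{2.1+} to extract a fixed-point-free $g$ a.e.\ is a legitimate application of $\LT$. But the proof has a genuine gap precisely where you flag it, and I do not see how your proposed repairs close it. Your target --- an ordinal-indexed partition of $S$ into $U$-null pieces --- is literally the negation of the standing hypothesis, and neither sketched route produces one. In the infinite-orbit case, applying Lemma \ref{2.1+} to $A_s=\{$injective $\la$-sequences starting at $s\}$ only hands you, for almost every $s$, \emph{some} injective sequence $\sigma_s$; the images of these sequences overlap uncontrollably, and there is no canonical way to read off from them a single function $S\to\la$ whose fibers are all null. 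In the finite-orbit case, the quotient of $S$ by $g$-orbit equivalence need not be well-orderable, and the pushforward of $U$ to the quotient is again a non-principal ultrafilter that is $\ka$-complete for every $\ka$ (orbit classes are countable, hence null), so ``transporting the argument'' reproduces the original problem with no progress measure to terminate the recursion. The orbit combinatorics of a single $g$ simply does not see enough of $S$.

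The missing idea in the paper is to use $\LT$ globally rather than pointwise: fix a large limit $\theta$ and form the ultrapower $M^*={}^S\seq{V_\theta;\in}/U$ with ultrapower embedding $j$. Your constancy claim says exactly that every ordinal of $M^*$ equals $j(\alpha)$ for a genuine ordinal $\alpha$, so the ordinals of $M^*$ are well-ordered; since $M^*$ models enough $\ZF$ (by $\LT$) to assign ranks, its membership relation is well-founded and one can take the transitive collapse, obtaining an elementary $j^*:V_\theta\to N^*$ fixing all ordinals. An $\in$-induction on rank then shows $j^*(a)=a$ for every $a\in V_\theta$, whereas non-principality of $U$ gives $[\mathrm{id}]\in j^*(S)\setminus S$ --- the desired contradiction. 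So the contradiction is not obtained by exhibiting a bad partition at all, but by showing the ultrapower embedding would have to be the identity. I'd recommend rebuilding the second half of your argument around this collapse-and-induction device; the first half of what you wrote can be kept essentially verbatim.
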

Note that, in $\ZF$, it is possible that
there is  a non-principal ultrafilter which is $\ka$-complete for every cardinal $\ka$.
We will return to this topic in the end of this section.

\begin{proof}
Suppose to the contrary that $U$ is $\ka$-complete for every cardinal $\ka$.
First we shall prove:
\begin{claim}
For every ordinal $\alpha$ and $f:S \to \alpha$,
there is $\beta<\alpha$ with $f^{-1}(\{\beta\}) \in U$.
\end{claim}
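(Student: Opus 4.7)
The plan is to derive this claim purely from the hyper-completeness assumption on $U$, without invoking $\LT$ at all; presumably $\LT$ will be used only later in the proof of Proposition~\ref{6.32}, to turn the claim into an actual contradiction with non-principality (probably via Lemma~\ref{2.1+} applied to a cleverly chosen family of non-empty sets). So I would fix an ordinal $\alpha$ and a function $f:S \to \alpha$, set $\ka = \size{\alpha}$ (a well-defined cardinal because $\alpha$ is well-orderable), and argue by contradiction.

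Assume $f^{-1}(\{\beta\}) \notin U$ for every $\beta < \alpha$. The ultrafilter property of $U$ then forces $S \setminus f^{-1}(\{\beta\}) \in U$ for every such $\beta$. The map $g:\alpha \to U$ sending $\beta$ to $S \setminus f^{-1}(\{\beta\})$ is defined explicitly from $f$ by a uniform formula, so no choice is needed to produce it. Since $U$ is $\la$-complete for every cardinal $\la$ by hypothesis, and in particular for $\la = \ka^+$, applying $\ka^+$-completeness to $g$ yields $\bigcap_{\beta<\alpha} g(\beta) \in U$.

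However, $\bigcap_{\beta<\alpha}(S \setminus f^{-1}(\{\beta\})) = S \setminus \bigcup_{\beta<\alpha} f^{-1}(\{\beta\}) = \emptyset$, since $f$ takes values in $\alpha$ and the fibres of $f$ therefore exhaust $S$. Hence $\emptyset \in U$, contradicting the properness of $U$. No genuine obstacle arises here: the only point where one has to be slightly careful in $\ZF$ is that $\ka$-completeness is formulated in terms of an actual function $\alpha \to U$ rather than an arbitrary subfamily of $U$, but because $g$ is given uniformly by a formula in $\beta$ this is automatic and no fragment of $\AC$ is invoked.
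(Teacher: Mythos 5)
Your proof is correct and is essentially the paper's own argument: assume all fibres are $U$-small, pass to the complements $S \setminus f^{-1}(\{\beta\}) \in U$, and use completeness (the paper invokes the blanket hypothesis directly, you spell out $\size{\alpha}^+$-completeness) to place the empty intersection in $U$, contradicting properness. The observation that the map $\beta \mapsto S \setminus f^{-1}(\{\beta\})$ is given uniformly and so needs no choice is a fair point of care that the paper leaves implicit.
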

\begin{proof}
Suppose not. For $\beta<\alpha$, let $S_\beta=\{x \in S \mid f(x) \neq \beta \}$,
which is in $U$.
By the assumption,
we know $\bigcap_{\beta<\alpha} S_\beta \in U$,
in particular, there is $x \in \bigcap_{\beta<\alpha} S_\beta \in U$.
Then $f(x) \neq \beta$ for every $\beta<\alpha$, this is impossible.
\end{proof}

Next, fix a large limit ordinal $\theta$ such that $V_\theta$ is a model of a sufficiently large
fragment of $\ZF$.
Let $M$ be the structure $\seq{V_\theta; \in}$,
and $M^*=\seq{M^*; E}$ the ultrapower ${}^S M/U$.
By $\LT$, $M^*$ is a model of (a large fragment of) $\ZF$.
Let $j:V_\alpha \to M^*$ be the ultrapower embedding,
that is, $j(a)=[c_a]$ for $a \in V_\theta$, where $c_a$ is the constant function with value $a$.
By $\LT$, the ultrapower embedding $j$ is an elementary embedding.
By the claim, for every $[f] \in M^*$, if $[f]$ is an ordinal of $M^*$, 
then $[f]=j(\alpha)$ for some ordinal $\alpha$.
Thus, we have that the set of ordinals of $M^*$ is isomorphic to some set of ordinals.
Hence the relation $E$ is well-founded on the ordinals of $M^*$.
Because $M^*$ is a model of (a large fragment of) $\ZF$,
every set in $M^*$ has a rank in $M^*$. Thus we conclude that
$E$ is a well-founded relation on $M^*$,
and we can take the transitive collapse 
$N^*$ of $M^*$ and the collapsing map $\pi:M^* \to N^*$.
Let $j^*=\pi\circ j$, which is an elementary embedding from $V_\theta$ into $N^*$.
We have that $j^*(\alpha)=\alpha$ for every $\alpha<\theta$.

Now we prove that $j^*(a) =a$ for every $a \in V_\theta$,
however this is impossible;
We know $S \in V_\theta$ but $[\mathrm{id}] \in j^*(S) \setminus S$ since 
$U$ is non-principal.

\begin{claim}
$j^*(a)=a$ for every $a \in V_\theta$.
\end{claim}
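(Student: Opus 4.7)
The plan is to prove $j^*(a)=a$ by induction on the rank of $a$. Fix $a\in V_\theta$, let $\rho=\mathrm{rk}(a)$, and assume as inductive hypothesis that $j^*(b)=b$ for every $b\in V_\theta$ with $\mathrm{rk}(b)<\rho$.

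The inclusion $a\subseteq j^*(a)$ is routine: if $b\in a$ then $\mathrm{rk}(b)<\rho$, so $j^*(b)=b$ by the inductive hypothesis, and applying elementarity of $j^*$ to the atomic formula $v_0\in v_1$ with parameters $b,a$ gives $b=j^*(b)\in j^*(a)$.

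The real step is the reverse inclusion $j^*(a)\subseteq a$, and the key point is that, because $j^*$ is elementary and fixes every ordinal below $\theta$, it preserves the rank function. Take $x\in j^*(a)$. Then inside $N^*$ the rank of $x$ is strictly less than the rank of $j^*(a)$, which by elementarity equals $j^*(\rho)=\rho$; since $N^*$ is transitive (it is the transitive collapse of $M^*$), this internal rank coincides with the external rank, so $\mathrm{rk}(x)<\rho<\theta$ and in particular $x\in V_\theta$. The inductive hypothesis then applies to $x$, yielding $j^*(x)=x$. Applying elementarity of $j^*$ once more, now to the formula $v_0\in v_1$ with parameters $x,a$, we have
\[
V_\theta \models x\in a \ \Longleftrightarrow\ N^* \models j^*(x)\in j^*(a) \ \Longleftrightarrow\ N^* \models x\in j^*(a),
\]
and the right-hand side holds by the choice of $x$. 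Hence $x\in a$, completing the induction.

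I do not foresee any serious obstacle: the argument rests entirely on two facts already secured in the excerpt, namely that $j^*$ fixes every ordinal (so the rank function is preserved) and that $j^*$ is an elementary embedding (a consequence of $\LT$). The only subtle point is the use of transitivity of $N^*$ to transfer the internal inequality $\mathrm{rk}^{N^*}(x)<\rho$ to the external $\mathrm{rk}(x)<\rho$, which is what places $x$ inside $V_\theta$ and unlocks the inductive hypothesis.
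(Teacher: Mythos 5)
Your proof is correct and follows essentially the same route as the paper: induction on rank, the forward inclusion directly from the inductive hypothesis and elementarity, and the reverse inclusion from the fact that $j^*$ fixes ordinals and hence preserves rank, so that elements of $j^*(a)$ fall under the inductive hypothesis. Your explicit remark about using transitivity of $N^*$ to identify internal and external rank is a detail the paper leaves implicit, but it is the same argument.
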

\begin{proof}
By induction on the rank of $a$.
Let $\alpha<\theta$,
and suppose 
$j^*(b)=b$ for every $b \in V_\alpha$.
Take a set $a \subseteq V_\alpha$ with rank $\alpha$.
By the induction hypothesis, we have $a \subseteq j^*(a)$.
For the converse that $j^*(a) \subseteq a$,
because $\mathrm{rank}(a)=j^*(\mathrm{rank}(a))=\mathrm{rank}(j^*(a))$,
we have that $j^*(a) \subseteq V_\alpha$.
Pick $b \in j^*(a)$. Then $\mathrm{rank}(b)<\alpha$,
so $j^*(b) =b \in j^*(a)$ and $b \in a$.
\end{proof}
\end{proof}

\begin{lemma}\label{7.5}
Suppose $\LT$ holds but there is no non-principal ultrafilter over $\om$.
Then for every set $S$,
$S$ carries a non-principal ultrafilter if and only if
there is a measurable cardinal $\ka$ and an injection $f:\ka \to S$.
\end{lemma}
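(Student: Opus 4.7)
The $(\Leftarrow)$ direction is straightforward: if $\ka$ is measurable and $f : \ka \to S$ is an injection, fix a non-principal $\ka$-complete ultrafilter $W$ on $\ka$ and take the pushforward $f_*(W) = \{X \subseteq S \mid f^{-1}(X) \in W\}$. Since $f$ is injective, $f^{-1}(\{s\})$ has at most one element for each $s\in S$, hence is not in $W$; so $f_*(W)$ is a non-principal ultrafilter on $S$.

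For $(\Rightarrow)$, suppose $U$ is a non-principal ultrafilter on $S$. I first want to locate the critical cardinal where the completeness of $U$ breaks down. Since there is no non-principal ultrafilter on $\om$, $U$ must be $\sigma$-complete: otherwise $\om$ would be the least cardinal $\nu$ for which $U$ is not $\nu^+$-complete, so Lemma \ref{6.22} would supply a map $g:S\to\om$ with $g^{-1}(n)\notin U$ for all $n<\om$, and then $g_*(U)$ would be a non-principal ultrafilter on $\om$, contradicting the hypothesis. Now apply Proposition \ref{6.32}: there is some cardinal $\la$ with $U$ not $\la$-complete, so let $\ka$ be the least cardinal for which $U$ is $\ka$-complete but not $\ka^+$-complete. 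By the previous paragraph $\ka \geq \om_1$, and Lemma \ref{6.22} yields that $\ka$ is regular and gives a function $g:S\to\ka$ with $g^{-1}(\alpha)\notin U$ for every $\alpha<\ka$.

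Next I show $\ka$ is measurable. Let $V = g_*(U)$, an ultrafilter on $\ka$. For each $\alpha<\ka$, $\{\alpha\} \notin V$ since $g^{-1}(\{\alpha\}) = g^{-1}(\alpha) \notin U$, so $V$ is non-principal. Since pushforwards preserve completeness, $V$ is $\ka$-complete. Thus $\ka$ is a measurable cardinal. A routine consequence of $\ka$-completeness and non-principality is that $V$ is uniform: every $X \in V$ has cardinality $\ka$ (otherwise $X = \bigcup_{\alpha \in X} \{\alpha\}$ would be a $\ka$-small union of null sets, hence null).

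The remaining task --- and the main obstacle, since we lack any form of choice on $\ka$-indexed families --- is to produce an injection from $\ka$ into $S$. Here I use $\LT$ via Lemma \ref{2.1+}. Let $R = \mathrm{range}(g) \subseteq \ka$; since $g(s) \in R$ for every $s\in S$, we have $R \in V$. Fix any $p$ and define $A_\alpha = g^{-1}(\alpha)$ if $\alpha \in R$ and $A_\alpha = \{p\}$ otherwise, so each $A_\alpha$ is non-empty. Applying Lemma \ref{2.1+} to the ultrafilter $V$ on $\ka$ and the family $\{A_\alpha \mid \alpha<\ka\}$, obtain a function $h$ on $\ka$ with $Y = \{\alpha<\ka \mid h(\alpha) \in A_\alpha\} \in V$. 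Then $Y \cap R \in V$, hence $|Y\cap R| = \ka$ by uniformity. For $\alpha \in Y\cap R$, $h(\alpha) \in g^{-1}(\alpha) \subseteq S$, and since the fibers of $g$ are pairwise disjoint, $h\restriction(Y\cap R)$ is an injection into $S$ from a set of size $\ka$. Composing with any bijection $\ka \to Y\cap R$ (available because $Y\cap R \subseteq \ka$) produces the required injection $f:\ka\to S$.
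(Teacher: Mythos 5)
Your proof is correct and follows essentially the same route as the paper's: use Proposition \ref{6.32} and Lemma \ref{6.22} (plus the absence of a non-principal ultrafilter on $\om$, which the paper cites as Lemma \ref{7.1}) to locate the least completeness cardinal $\ka$ and a map $g:S\to\ka$, push $U$ forward to witness measurability of $\ka$, and then apply $\LT$ to the family of fibers of $g$ to select points and assemble an injection $\ka\to S$. Your only deviations are cosmetic --- padding the fiber family with a dummy singleton rather than re-indexing $g$ to be surjective, and spelling out the uniformity and re-enumeration step that the paper compresses into ``by arranging this function.''
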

\begin{proof}
If there is an injection from a measurable cardinal $\ka$ to $S$,
then it is clear that $S$ carries a non-principal ultrafilter.
For the converse, suppose $S$ carries a non-principal ultrafilter $U$.
By $\LT$ and Proposition \ref{6.32},
there is a cardinal $\ka$ and a map $g:S \to \ka$ such that
$U$ is $\ka$-complete but not $\ka^+$-complete, and $g^{-1}(\alpha) \notin U$ for every $\alpha<\ka$.
By Lemma \ref{7.1} and the assumption that no non-principal ultrafilter over $\om$,
$\ka$ must be uncountable.
$g_*(U)$ forms a non-principal $\ka$-complete ultrafilter, hence
$\ka$ is measurable.
Now we may assume that $g^{-1}(\alpha+1) \setminus g^{-1}(\alpha) \neq \emptyset$ for every $\alpha<\ka$.
Let $S_\alpha=g^{-1}(\alpha+1) \setminus g^{-1}(\alpha)$.
By $\LT$, we can find a function $h$ on $\ka$ such that
$X=\{\alpha<\ka \mid h(\alpha) \in S_\alpha\} \in g_*(U)$.
Then $h \restriction X$ is an injection from $X$ into $S$.
By arranging this function,
we can take an injection from $\ka$ to $S$.
\end{proof}

\begin{prop}\label{7.2}
Suppose $\LT$ holds but there is no non-principal ultrafilter over $\om$.
Then the following are equivalent:
\begin{enumerate}
\item There is a measurable cardinal.
\item There is a non-principal $\sigma$-complete ultrafilter.
\item There is a non-principal ultrafilter.
\item There is a non-principal ultrafilter, and every ultrafilter is $\sigma$-complete.
\end{enumerate}
\end{prop}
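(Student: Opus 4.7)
The plan is to close the circle by combining the hypotheses with the two lemmas just established. The implications $(4) \Rightarrow (3)$ and $(2) \Rightarrow (3)$ are immediate from the definitions, so the real content lies in $(3) \Rightarrow (2)$, $(3) \Rightarrow (1)$, and $(1) \Rightarrow (4)$.

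First I would handle $(3) \Rightarrow (2)$ and the ``every ultrafilter is $\sigma$-complete'' clause of $(4)$ using the contrapositive of Lemma \ref{7.1}: since the hypothesis says $\om$ carries no non-principal ultrafilter, every ultrafilter in our universe is automatically $\sigma$-complete. So any non-principal ultrafilter granted by (3) is already $\sigma$-complete, giving (2).

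Next I would prove $(3) \Rightarrow (1)$ by a direct appeal to Lemma \ref{7.5}: a non-principal ultrafilter on some set $S$ exists if and only if there is a measurable cardinal $\ka$ with an injection $\ka \to S$; in particular a measurable cardinal exists. Note this is really where $\LT$ and Proposition \ref{6.32} are doing the work, but they are already absorbed into Lemma \ref{7.5}. Since $(2) \Rightarrow (3)$ is trivial, this also yields $(2) \Rightarrow (1)$.

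Finally, for $(1) \Rightarrow (4)$, if $\ka$ is a measurable cardinal, then by definition $\ka$ carries a $\ka$-complete non-principal ultrafilter, which is in particular $\sigma$-complete and non-principal. Combined with the observation from the first step that every ultrafilter is $\sigma$-complete under the standing hypothesis, this delivers (4). No single step should be an obstacle; the only subtlety is remembering that the ``no non-principal ultrafilter over $\om$'' hypothesis is exactly what allows Lemma \ref{7.1} to force $\sigma$-completeness of every ultrafilter, which is what makes (3) and (2) collapse together in this setting.
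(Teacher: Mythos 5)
Your proposal is correct and follows essentially the same route as the paper: the equivalences collapse via Lemma \ref{7.1} (whose contrapositive makes every ultrafilter $\sigma$-complete under the standing hypothesis) together with Lemma \ref{7.5} for the implication from the existence of a non-principal ultrafilter to the existence of a measurable cardinal. The only difference is a cosmetic reorganization of which arrows in the cycle are labeled trivial.
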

\begin{proof}
(1) $\Rightarrow$ (2) $\Rightarrow$ (3) is trivial,
and (3) $\Rightarrow$ (1) is Lemma \ref{7.5}.

(4) $\Rightarrow$ (3) is obvious,
and (3) $\Rightarrow$ (4) follows form
Lemma \ref{7.1}.
\end{proof}

By this proposition, we have Theorem \ref{1.8+}.
\begin{thm}
The following theories are equiconsistent:
\begin{enumerate}
\item $\ZF+\LT+\mathsf{WUF(?)}+\neg \mathsf{AC}_{\om,2}$.
\item $\ZF+\LT+\mathsf{WUF(?)}+\neg \mathsf{AC}_\om$.
\item $\ZFC+$``there is a measurable cardinal''.
\end{enumerate}
\end{thm}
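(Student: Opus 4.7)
The plan is to close the three-way equiconsistency by showing the cycle $\mathrm{Con}(3) \Rightarrow \mathrm{Con}(1) \Rightarrow \mathrm{Con}(2) \Rightarrow \mathrm{Con}(3)$.

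For $\mathrm{Con}(3) \Rightarrow \mathrm{Con}(1)$, I would invoke Theorem \ref{1.9} (proved as Theorem \ref{6.13}) directly: starting from $\ZFC$ plus a measurable cardinal $\ka$, the symmetric intermediate model $M$ from Section \ref{sec6} satisfies $\LT$, $\neg \mathsf{AC}_{\om, 2}$, and retains $\ka$ as a measurable cardinal. In particular, the $\ka$-complete non-principal ultrafilter on $\ka$ witnesses $\mathsf{WUF(?)}$ in $M$, so $M$ is a model of (1).

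For (1) $\Rightarrow$ (2), this is immediate and does not even require passing to consistency. Since $\AC_\om$ trivially implies $\AC_{\om,2}$, its negation $\neg \AC_{\om,2}$ entails $\neg \AC_\om$. Thus any model of (1) is already a model of (2).

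The substantive direction is $\mathrm{Con}(2) \Rightarrow \mathrm{Con}(3)$. Assume $\ZF+\LT+\mathsf{WUF(?)}+\neg\AC_\om$; I need to produce an inner model of $\ZFC$ with a measurable cardinal. The strategy is a simple case split on whether $\om$ carries a non-principal ultrafilter. Suppose it does; then by Theorem \ref{2.1}, together with $\LT$, the Countable Choice $\AC_\om$ must hold — contradicting the hypothesis. Hence $\om$ carries no non-principal ultrafilter. Now invoke $\mathsf{WUF(?)}$ to fix a non-principal ultrafilter $U$ over some set $S$. By Proposition \ref{7.2} (clauses (3) $\Rightarrow$ (1)), the existence of $U$ together with $\LT$ and the absence of a non-principal ultrafilter over $\om$ yields a measurable cardinal. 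Finally, the Fact cited earlier (``if there is a measurable cardinal, then there is an inner model of $\ZFC+$`there is a measurable cardinal' '') provides the desired inner model, giving $\mathrm{Con}(3)$.

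No step here looks like a genuine obstacle because the heavy lifting has been done earlier: Theorem \ref{1.9} supplies the upper bound, Theorem \ref{2.1} eliminates the $\om$-ultrafilter case, and Proposition \ref{7.2} packages the measurable-cardinal extraction from $\LT$ plus a non-principal ultrafilter on an uncountable set. The only point requiring a moment of care is verifying that (1) really does imply (2) as stated (i.e., that $\neg\AC_{\om,2}$ is strictly stronger than $\neg\AC_\om$), so that the cycle is correctly oriented.
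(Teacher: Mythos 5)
Your proposal is correct and follows essentially the same route as the paper: $(3)\Rightarrow(1)$ via Theorem \ref{6.13}, $(1)\Rightarrow(2)$ from the trivial implication $\neg\mathsf{AC}_{\om,2}\Rightarrow\neg\mathsf{AC}_\om$, and $(2)\Rightarrow(3)$ by using Theorem \ref{2.1} to rule out a non-principal ultrafilter over $\om$, then Proposition \ref{7.2} to extract a measurable cardinal and the cited Fact to pass to an inner model of $\ZFC$. No gaps.
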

\begin{proof}
(1) $\Rightarrow$ (2) is trivial, and 
(3) $\Rightarrow$ (1) is Theorem \ref{6.13}.

For (2) $\Rightarrow$ (3), suppose $V$ is a model of
$\ZF+\LT+\mathsf{WUF(?)}+\neg \mathsf{AC}_\om$.
Then by Theorem \ref{2.1}, there is no non-principal ultrafilter over $\om$.
Hence there is a measurable cardinal by Proposition \ref{7.2},
and then we can take an inner model of $\ZFC+$``there is a measurable cardinal''.
\end{proof}

We also have:
\begin{cor}\label{7.8}
The following theories are equiconsistent:
\begin{enumerate}
\item $\ZFC+$``there is a measurable cardinal''.
\item $\ZF+\LT+$``there is a non-principal $\sigma$-complete ultrafilter''.
\item $\ZF+\LT+\mathsf{WUF(?)}+$``every ultrafilter is
$\sigma$-complete''.
\end{enumerate}
\end{cor}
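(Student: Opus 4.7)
I would prove the equiconsistency by closing the circle
$\mathrm{Con}(1) \Rightarrow \mathrm{Con}(3) \Rightarrow \mathrm{Con}(2) \Rightarrow \mathrm{Con}(1)$,
with the first two links essentially free and all the content in the third.

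The implication $\mathrm{Con}(1) \Rightarrow \mathrm{Con}(3)$ is handed to us by Theorem \ref{6.13}: in the model $M$ produced there, $\ZF + \LT + $``every ultrafilter is $\sigma$-complete''\ holds and a measurable cardinal survives. The surviving measurable cardinal carries a non-principal ultrafilter, so $\mathsf{WUF(?)}$ holds in $M$ and (3) is witnessed. The implication $\mathrm{Con}(3) \Rightarrow \mathrm{Con}(2)$ is immediate: a model of (3) contains a non-principal ultrafilter by $\mathsf{WUF(?)}$, and that ultrafilter is $\sigma$-complete by the other conjunct, so (2) holds in the same model.

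The substantive direction is $\mathrm{Con}(2) \Rightarrow \mathrm{Con}(1)$. Working inside a model of $\ZF + \LT + $``there is a non-principal $\sigma$-complete ultrafilter $U$ over some set $S$'', my plan is to derive the existence of a measurable cardinal in $V$ and then invoke the inner-model Fact stated just after Lemma \ref{6.22} to pass to an inner model of $\ZFC + $``there is a measurable cardinal''. Proposition \ref{6.32} (using $\LT$ and the non-principality of $U$) produces a cardinal for which $U$ fails to be complete; since completeness passes through limit cardinals, there is then a cardinal $\ka$ with $U$ being $\ka$-complete but not $\ka^+$-complete. Lemma \ref{6.22} now yields that $\ka$ is regular and furnishes a function $g : S \to \ka$ with $g^{-1}(\{\alpha\}) \notin U$ for every $\alpha < \ka$. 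The pushforward $g_*(U)$ is then an ultrafilter over $\ka$ that is non-principal (no singleton belongs to it) and $\ka$-complete (inherited from $U$ via preimages), and $\sigma$-completeness of $U$ forces $\ka \ge \om_1$. Hence $\ka$ is an uncountable regular cardinal carrying a non-principal $\ka$-complete ultrafilter, i.e., a measurable cardinal.

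The step that does genuine work is the appeal to Proposition \ref{6.32}, which is the sole place where $\LT$ enters. In bare $\ZF$ a non-principal ultrafilter may be $\la$-complete for every cardinal $\la$ (as the author remarks right after that proposition), and without a failure of completeness anywhere there is no handle on which to build a measurable cardinal. Once Proposition \ref{6.32} supplies that failure, the remaining steps (extracting the exact transition cardinal, applying Lemma \ref{6.22}, forming the pushforward, and invoking the inner-model Fact) are routine.
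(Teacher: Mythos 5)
Your proposal is correct and follows essentially the route the paper intends: $\mathrm{Con}(1)\Rightarrow\mathrm{Con}(3)$ via Theorem \ref{6.13}, $\mathrm{Con}(3)\Rightarrow\mathrm{Con}(2)$ trivially, and $\mathrm{Con}(2)\Rightarrow\mathrm{Con}(1)$ by the same chain the paper uses in Lemma \ref{7.5} and Proposition \ref{7.2} (Proposition \ref{6.32}, then Lemma \ref{6.22} and the pushforward, then the inner-model Fact). You also correctly observe the one subtlety, namely that for item (2) the hypothesis ``no non-principal ultrafilter over $\om$'' of Proposition \ref{7.2} is not available but is not needed, since $\sigma$-completeness of the given ultrafilter already forces the critical cardinal $\ka$ to be uncountable.
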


Finally, unlike Corollary \ref{7.8}, we prove that the statement that
$\mathsf{WUF(?)}+$``every ultrafilter is $\sigma$-complete'' 
does not have a large cardinal strength.

An infinite set $S$ is said to be \emph{amorphous}
if every subset of $S$ is finite or co-finite.
If $S$ is amorphous, the set $U$ of all co-finite subsets of $S$ 
forms a $\sigma$-complete non-principal ultrafilter over $S$;
It is easy to check that $U$ is a non-principal ultrafilter.
If $U$ is not $\sigma$-complete,
we can find a partition $\seq{S_n \mid n<\om}$ of $S$ such that
each $S_n$ is non-empty finite subset of $S$.
Then the set $\bigcup_{n } S_{2n}$ is an infinite co-infinite subset of $S$,
this is a contradiction.
In fact, one can check that $U$ is $\ka$-complete for every cardinal $\ka$ by
the same argument.

\begin{fact}[E.g., see Jech \cite{J}]
If $\ZFC$ is consistent,
then so is $\ZF+$``there is an amorphous set''.
\end{fact}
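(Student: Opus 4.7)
The plan is to follow the classical Fraenkel--Mostowski plus Jech--Sochor route. First I would work in a model $V^A$ of $\ZFA$ (set theory with atoms) built over a countably infinite set $A$ of atoms. Let $\mathcal{G}$ be the group of all permutations of $A$, extended in the natural hereditary way to the whole universe $V^A$. For each finite $E \subseteq A$, put $\mathrm{fix}(E) = \{\pi \in \mathcal{G} \mid \pi \restriction E = \mathrm{id}_E\}$, and let $\calF$ be the normal filter on $\mathcal{G}$ generated by $\{\mathrm{fix}(E) \mid E \in [A]^{<\om}\}$. Let $\mathcal{N}$ be the associated permutation submodel of $V^A$ consisting of all hereditarily symmetric elements.

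Next, I would verify that $A$ is amorphous in $\mathcal{N}$. Note $A \in \mathcal{N}$ since $A$ has empty support. Let $B \in \mathcal{N}$ with $B \subseteq A$, and fix a finite $E \subseteq A$ with $\mathrm{fix}(E) \subseteq \mathrm{sym}(B)$. Suppose, toward contradiction, that $B$ is neither finite nor cofinite; then there exist $a \in B \setminus E$ and $b \in A \setminus (B \cup E)$. The transposition $(a\ b)$ belongs to $\mathrm{fix}(E)$, so it fixes $B$ setwise; but it sends $a \in B$ to $b \notin B$, a contradiction. Hence $B \subseteq E$ or $A \setminus E \subseteq B$, so $B$ is finite or cofinite, i.e., $A$ is amorphous in $\mathcal{N}$.

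Finally, I would invoke the Jech--Sochor first embedding theorem: given a permutation model $\mathcal{N}$ of $\ZFA$ and any ordinal $\alpha$, there is a symmetric extension $\mathcal{M}$ of a $\ZF$ model $V$ together with an $\in$-embedding of the rank-initial segment $\mathcal{N} \cap (V^A)_\alpha$ into $\mathcal{M}$ preserving the relevant first-order properties of well-founded sets built over $A$. Since ``$A$ is amorphous'' is expressible by a formula about a set of rank $\le \mathrm{rank}(A) + 2$, it transfers to give, in $\mathcal{M}$, a set $A^*$ which is amorphous in the $\ZF$ model $\mathcal{M}$. This yields $\ZF +$``there is an amorphous set'' from the consistency of $\ZFC$.

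The combinatorial content (Step~2, the two-point swap argument) is short and self-contained and is where the amorphousness is really used. The main obstacle is the Jech--Sochor transfer step; I would not reprove it but simply cite it from Jech~\cite{J}, since the whole point here is that this is a standard fact. Alternatively, one could try to construct a symmetric extension directly (in the style of Section~\ref{sec5}) using, say, a poset adding ``indiscernible'' generic elements with the full symmetric group acting by permutation of coordinates; the same two-point swap argument would then apply, but one has to design the poset so that the generic elements admit \emph{no} internal distinguishing data (unlike the Cohen poset $\mathrm{Fn}(\om \times \om, 2, \mathop{<}\om)$, whose generics split along any single coordinate bit). For the purposes of this Fact, the Fraenkel--Sochor route is cleaner.
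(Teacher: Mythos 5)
Your proposal is correct and is exactly the standard argument that the paper's citation to Jech points to: the basic Fraenkel permutation model (full symmetric group on a countable set of atoms, finite supports), where the two-point swap shows the atoms form an amorphous set, followed by the Jech--Sochor embedding theorem to transfer this boundable statement to a $\ZF$ model. The paper itself offers no proof beyond the citation, so there is nothing to contrast.
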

\begin{thm}
If $\mathsf{ZFC}$ is consistent, then so is
$\mathsf{ZF}+\mathsf{WUF(?)}+$``every ultrafilter is $\sigma$-complete''.
\end{thm}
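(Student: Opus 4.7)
The strategy is to produce a single ZF model $N$ in which simultaneously
(i) there exists an amorphous set $S$, and
(ii) $\om$ carries no non-principal ultrafilter.
Granted such a model, the theorem follows immediately: by the discussion just preceding the theorem statement, the filter of cofinite subsets of $S$ is a non-principal ultrafilter on $S$ which is $\ka$-complete for every cardinal $\ka$, and so in particular witnesses $\mathsf{WUF(?)}$; and by Lemma \ref{7.1}, the absence of a non-principal ultrafilter over $\om$ is precisely the statement that every ultrafilter is $\sigma$-complete.

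The first step is the standard Fraenkel--Mostowski construction of an amorphous set in $\ZF\mathrm{A}$: let $A$ be a countably infinite set of atoms, let $\calG$ be the full symmetric group on $A$, and let $\calF$ be the normal filter on $\calG$ generated by pointwise stabilizers of finite subsets of $A$; in the resulting permutation model, $A$ is amorphous. The cited Fact that $\mathrm{Con}(\ZFC)$ implies $\mathrm{Con}(\ZF{}+{}$``there is an amorphous set''$)$ is the Jech--Sochor embedding of this $\ZF\mathrm{A}$ model into a genuine $\ZF$ symmetric extension, preserving the bounded statement ``some set is amorphous''.

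The second and main step is to strengthen the construction so that, in addition, no non-principal ultrafilter on $\om$ appears. The plan is to enlarge the symmetric system by adjoining a Feferman-style component that adds a family of Cohen subsets $\{B_n \mid n<\om\}$ of $\om$, equipped with a second group of automorphisms permuting the $B_n$, and to form the symmetric submodel with respect to the natural product symmetric system. A hereditarily symmetric name for a prospective non-principal ultrafilter on $\om$ must then have a finite support with respect to both group actions, and an analysis of forcing with finite supports, parallel to Lemma \ref{4.2}, shows that such a name cannot consistently decide membership of each generic $B_n$; this rules out non-principal ultrafilters on $\om$ in $N$, just as in the standard analysis of Feferman's and Blass's models.

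The hard part will be verifying that the two devices coexist cleanly in one symmetric extension: that the atom-based (or Jech--Sochor-coded) set $A$ really remains amorphous in the enriched model, and that the $\om$-level permutations truly destroy \emph{every} hereditarily symmetric candidate for a non-principal ultrafilter on $\om$ without inadvertently destroying $A$'s amorphous character. Once both (i) and (ii) are established in $N$, the theorem follows as explained in the first paragraph.
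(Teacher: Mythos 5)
Your overall strategy coincides with the paper's: exhibit one $\ZF$ model containing an amorphous set but no non-principal ultrafilter over $\om$, then invoke the cofinite ultrafilter on the amorphous set together with Lemma \ref{7.1}. However, the step you yourself flag as ``the hard part'' --- that the amorphous set (equivalently, its cofinite ultrafilter) survives the Feferman-style component --- is exactly where your proposal stops, and the route you sketch for it (a single symmetric system carrying both the Fraenkel--Mostowski/Jech--Sochor group action and the $\om$-level permutations, with a finite-support analysis of their interaction) is not carried out and is more delicate than necessary. As it stands this is a genuine gap: you have not shown that any set remains amorphous, nor that any non-principal ultrafilter exists at all, in the model you describe.

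The paper closes this gap by organizing the construction sequentially rather than as one combined symmetric system. It starts from a $\ZF$ ground model $V$ that already contains an amorphous set $S$ (quoting the Jech--Sochor fact as a black box), and then performs the Feferman construction of Section \ref{sec5} (case $\ka=\om$) over this choiceless $V$, observing that the proof of Lemma \ref{4.5} never uses $\AC$ in the ground model, so $\om$ still carries no non-principal ultrafilter in $\mathrm{HS}^G$. The key lemma your proposal is missing is then Claim \ref{7.11}: since the forcing $\mathrm{Fn}(\om\times\om,2,\mathop{<}\om)$ is countable and the cofinite ultrafilter $U$ on $S$ is $\sigma$-complete, for any name $\dot X$ for a subset of $S$ either some condition forces a $U$-large set into $\dot X$, or else the union over the countably many conditions of the sets of points not forced into $\dot X$ is itself $U$-large and is forced disjoint from $\dot X$; hence $U$ generates an ultrafilter in $V[G]$ and a fortiori in the intermediate model $\mathrm{HS}^G$. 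This short completeness-versus-countability argument replaces your entire ``coexistence'' analysis; to salvage your combined-system approach you would need to prove an analogous preservation statement, which is precisely the content you have deferred.
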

\begin{proof}
By Lemma \ref{7.1}, it is enough to construct a model of $\ZF$ in which
there is a non-principal ultrafilter but
there is no non-principal ultrafilter over $\om$.

Since the theory $\ZF$+``there is an amorphous set'' is equiconsistent with
$\ZFC$,
we can assume that there is an amorphous set $S$ in $V$.
Let $U$ be the set of all co-finite subsets of $S$,
it is a non-principal $\sigma$-complete ultrafilter over $S$. 
Let $\bbP=\mathrm{Fn}(\om \times \om,2, \mathop{<}\om)$. Note that $\bbP$ is countable in $V$.
Take a $(V, \bbP)$-generic $G$.

\begin{claim}\label{7.11}
In $V[G]$, $U$ generates an ultrafilter over $S$.
\end{claim}
\begin{proof}
Take $p \in \bbP$ and a $\bbP$-name $\dot {X}$ such that
$p \Vdash \dot X \subseteq \check S$.
For $q \le p$, 
let $S_q=\{s \in S \mid q \Vdash \check s \in \dot X\}$.
If $S_q \in U$ for some $q \in \bbP$, then $q \Vdash \check S_q \subseteq \dot X$,
and $q$ forces that ``$\dot X$ is in the filter generated by $U$''.
If there is no $q\le p$ with $S_q \in U$,
since $\bbP$ is countable and $U$ is $\sigma$-complete,
we have $T=\bigcup_{q \le p} (S \setminus S_q) \in U$
and  $p \Vdash \check T \cap \dot X=\emptyset$,
so $p \Vdash \check S \setminus \dot X \in \dot U$.
\end{proof}

Now, let $\mathrm{HS}^G$ be the symmetric extension of $V$ in Section \ref{sec5} with the case $\ka=\om$.
We can carry out the proof of Lemma \ref{4.5} without $\mathsf{AC}$ in $V$,
hence $\mathrm{HS}^G$ is a model of $\ZF+$``there is no non-principal ultrafilter over $\om$''.
On the other hand, since $\mathrm{HS}^G$ is an intermediate model
between $V$ and $V[G]$,
$U$ generates a non-principal ultrafilter over $S$ in $\mathrm{HS}^G$ by Claim \ref{7.11}.
\end{proof}

\end{document}